\newtheorem{proposition}{Proposition}[section]
\newtheorem{definition}[proposition]{Definition}
\newtheorem{lemma}[proposition]{Lemma}
\newtheorem{theorem}[proposition]{Theorem}
\newtheorem{corollary}[proposition]{Corollary}
\theoremstyle{definition}
\newtheorem{open}{Open problem}
\theoremstyle{remark}
\newtheorem{remark}[proposition]{Remark}
\newcommand{\defeq}{\triangleq}
\newcommand{\reals}{\mathbb{R}}
\newcommand{\targetspace}{V}
\newcommand{\dualspace}{{\targetspace}^\star}
\newcommand{\action}[2]{\langle{#1},{#2}\rangle}
\newcommand{\dualproduct}[2]{\action{#1}{#2}}
\newcommand{\Bigdualproduct}[2]{\Bigl\langle{#1},{#2}\Bigr\rangle}
\newcommand{\sphere}{\mathcal{S}}
\newcommand{\integers}{\mathbb{N}}
\DeclareMathOperator{\support}{supp}
\newcommand{\dif}{{\,\mathrm{d}}}
\newcommand{\genericnorm}[1]{\lvert{#1}\rvert}
\newcommand{\abs}[1]{\lvert {#1}\rvert}
\newcommand{\bigabs}[1]{\bigl\lvert {#1}\bigr\rvert}
\newcommand{\norm}[2]{\genericnorm{#2}_{#1}}
\newcommand{\biggnorm}[2]{\biggl\lvert #2 \biggr\rvert_{#1}}
\DeclareMathOperator{\diam}{diam}
\newcommand{\closure}[1]{\overline{#1}}
\newcommand{\chull}[1]{\operatorname{conv}{#1}}
\newcommand{\cchull}[1]{\closure{\chull{#1}}}
\newcommand{\borel}[1]{\mathscr{B}({#1})}
\newcommand{\hausdist}{d_{\mathfrak{H}}}
\newcommand{\st}{\,:\,}
\newcommand{\variation}[2]{%
\ifthenelse{\equal{#1}{\reals}}%
{\vert{#2}\vert}%
{\vert{#2}\vert_{#1}}%
}
\newcommand{\range}[2][]{%
\ifthenelse{\equal{#1}{}}{\operatorname{rg}#2 \left(X\right)}%
{\operatorname{rg} #2\left(#1\right)}%
}
\newcommand{\scalarproduct}[3]{%
\ifthenelse{\equal{#3}{\star}}%
{{#1}\star{#2}}%
{\left(#1 , #2\right)_{#3}}%
}
\numberwithin{equation}{section}
\newcommand{\mass}[2]{\variation{#1}{#2}(X)}
\title[Convergence and mass of vector measures]{Range convergence monotonicity for vector measures and range monotonicity of the mass}
\keywords{Wide convergence of vector measures; total variation; strict convexity; zonoids.}
\subjclass[2010]{46G10 (28B05, 52A05,52A39)}
\author{Justin Dekeyser}
\author{Jean Van Schaftingen}
\email{Jean.VanSchaftingen@UCLouvain.be}
\address{Universit\'e catholique de Louvain\\ 
Institut de Recherche en Math\'ematique et Physique\\
Chemin du Cyclotron 2 bte L7.01.01\\
1348 Louvain-la-Neuve\\
Belgium}
\thanks{J. Van Schaftingen was supported by the Mandat d'Impulsion Scientifique F.4523.17, ``Topological singularities of Sobolev maps'' of the Fonds de la Recherche Scientifique--FNRS}
\begin{document}

\begin{abstract}
We prove that the range of sequence of vector measures converging widely satisfies a weak lower semicontinuity property,
that the convergence of the range implies the strict convergence (convergence of the total variation) and that 
the strict convergence implies the range convergence for strictly convex norms.
In dimension $2$ and for Euclidean spaces of any dimensions, we prove that the total variation of a vector measure is monotone with respect to the range.
\end{abstract}

\maketitle
\tableofcontents

\section{Introduction}

A vector measure \(\mu\) maps some \(\Sigma\)--measurable subsets of a space \(X\) to vectors in a linear space \(V\) \citelist{\cite{Diestel_Uhl_1977}\cite{Halmos_1948}\cite{Ambrosio_Fusco_Pallara_2000}*{Chapter 1}}.
Vector measures appear naturally in calculus of variations and in geometric measure theory, as derivatives of functions of bounded variation (BV) \cite{DeGiorgi_1954} and as currents of finite mass \cite{Federer_1969}. 
In these contexts, it is natural to work with sequences of finite Radon measures and to study their convergence.

The \emph{wide convergence} of a sequence \((\mu_n)_{n \in \integers}\) of finite vector measures to some finite vector measure \(\mu\) is defined by the condition that for every compactly supported continuous test function \(\varphi \in C_c (X, \reals)\), one has 
\begin{equation*}
    \lim_{n \to \infty} \int_{X} \varphi \dif \mu_n
  = 
    \int_{X} \varphi \dif \mu
    .
\end{equation*}
The \emph{strict convergence}, which requires the total variation to converge also, is an important concept that brings one back into a setting similar to tight sequences of probability measures.

The \emph{total variation} of a vector measure \(\mu\) is defined in terms of a seminorm \(\abs{\cdot}_\targetspace\) on \(V\) by
\begin{equation*} 
  \variation{\targetspace}{\mu} (X)
  \defeq
    \sup
    \Bigg\{ 
      \sum_{E\in\mathcal{P}} \abs{\mu(E)}_{\targetspace} \st \mathcal{P}\ \text{is a finite \(\Sigma\)--measurable partition of}\ X \Bigg\} .
\end{equation*}
For real-valued signed Radon measures, \(\targetspace = \reals\) and the mass depends only, up to a constant positive multiple, on the vector structure of the space \(\targetspace\). 
This is not any more the case when \(V\) is a finite-dimensional vector space
of dimension greater than \(1\) (\(\dim \targetspace > 1\)). 
The norms on the space \(V\) are all equivalent, but induce total variations whose values are not connected mutually beyond the bounds given by equivalence of seminorms. 

This lack of connection between various total variations is the starting question of the present work. Reshetnyak~\cite{Reshetnyak} and Delladio~\cite{Delladio} have proved that strict convergence with respect to a strictly convex norm is a sufficient assumption to obtain strict convergence with respect to any another norm on \(\targetspace\) (see also \cite{Spector_2011} for the mass with respect to a Euclidean norm). 

In order to characterize strict convergence through a topological object, independent of a choice of norm, we propose to describe strict convergence with the \emph{range} of a vector measure \(\mu\), which is defined as 
\footnote{Classically, the range of a vector measure is defined~\cite{Diestel_Uhl_1977} as the closure of the set
\( 
  \{ \mu(A) : A \subset X \text{ is \(\Sigma\)--measurable} \} 
  \).
This definition coincides with ours as soon as \(\mu\) has no atoms  \citelist{\cite{Liapounoff_1940}\cite{Diestel_Uhl_1977}*{Corollary 9.5}}. Our notion has the advantage of being always a convex set and is natural in view of the link between range and the total variation \cite{Rodriguez_Piazza}*{Theorem 3}.
}
\begin{equation*} 
  \range{\mu} \defeq \cchull{\big\{\mu(A) : A \in\Sigma \big\}} .
\end{equation*}
In particular, when \(\targetspace = \reals\), we have 
\begin{equation}
\label{eq_range_1d}
 \range{\mu} 
 = [\mu^- (X), \mu^+ (X)],
\end{equation}
where \(\mu = \mu^+ + \mu^-\), with \(\mu^+ \ge 0\), \(\mu^- \le 0\), and \(\mu^+\) and \(\mu^-\) are mutually singular,
so that for any widely converging sequence of vector measures, the convergence of the range is equivalent with the convergence of the mass \cite{Conway}*{Appendix~C}.

In general, the range of a vector measure characterizes uniquely the total variation of a measure \cite{Rodriguez_Piazza}.
This result is stable under convergence as the range convergence implies in general the mass convergence:

\begin{theorem}
\label{theorem_range_implies_mass}
Let \(X\) be a locally compact separable metric space endowed with its Borel \(\sigma\)-algebra \(\mathscr{B}(X)\), let \(\targetspace\) be a finite-dimensional space and let \(\norm{\targetspace}{\cdot}\) be a seminorm on \(V\).
If \((\mu_n)_{n\in\integers}\) is a sequence of Borel vector measures 
from \(X\) to \(V\) that converges widely to some vector Borel measure \(\mu\),
and if the sequence of ranges \((\range{\mu_n})_{n\in\integers}\) converges to \(\range{\mu}\) in Hausdorff distance,
then
  \begin{equation*} 
    \lim_{n\to\infty}\mass{\targetspace}{\mu_n} 
    = \mass{\targetspace}{\mu} .
  \end{equation*}
\end{theorem}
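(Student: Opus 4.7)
The plan is to establish the conclusion first for an auxiliary Euclidean norm on $\targetspace$ and then extend it to the given seminorm via the Reshetnyak--Delladio theorem recalled in the introduction. Fix an auxiliary inner product on $\targetspace$, write $\abs{\cdot}_2$ for the corresponding Euclidean norm, let $\spacedim = \dim \targetspace$, and let $\sigma$ denote the uniform probability measure on the Euclidean unit sphere $\sphere^{\spacedim-1}$ of $\dualspace$.

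The core of the argument is an integral representation of the Euclidean mass via the support function $h_{\range{\mu}}$ of the range. For each $v \in \dualspace$, applying \eqref{eq_range_1d} to the real signed measure $\dualproduct{v}{\mu}$, whose range is the interval $[-h_{\range{\mu}}(-v),\,h_{\range{\mu}}(v)]$, gives
\begin{equation*}
    \mass{\reals}{\dualproduct{v}{\mu}}
    =
    h_{\range{\mu}}(v) + h_{\range{\mu}}(-v).
\end{equation*}
Using the polar decomposition $\mu = g\,\dif \mass{2}{\mu}$ with $\abs{g}_2 = 1$ almost everywhere and Fubini's theorem, I then obtain
\begin{equation*}
    \int_{\sphere^{\spacedim-1}} \bigl(h_{\range{\mu}}(v)+h_{\range{\mu}}(-v)\bigr)\dif\sigma(v)
    =
    \int_X \int_{\sphere^{\spacedim-1}} \abs{\dualproduct{v}{g(x)}}\dif\sigma(v)\dif\mass{2}{\mu}(x)
    =
    c_\spacedim \, \mass{2}{\mu}(X),
\end{equation*}
where $c_\spacedim > 0$ is the dimensional constant $\int_{\sphere^{\spacedim-1}} \abs{\dualproduct{v}{e}} \dif\sigma(v)$ for any fixed unit $e \in \targetspace$.

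Next, Hausdorff convergence of compact convex sets is equivalent to uniform convergence of their support functions on bounded subsets, and the wide convergence of $(\mu_n)$ together with the Banach--Steinhaus principle confines the ranges $\range{\mu_n}$ to a common ball of $\targetspace$. Dominated convergence then passes to the limit in the integral identity above, yielding $\mass{2}{\mu_n} \to \mass{2}{\mu}$, that is, strict convergence of $(\mu_n)$ for the Euclidean norm. Since the Euclidean norm is strictly convex, the Reshetnyak--Delladio theorem upgrades this strict convergence to strict convergence for the arbitrary seminorm $\abs{\cdot}_\targetspace$, giving the desired claim.

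The main obstacle is to justify the integral representation when $\mu$ has atoms: the range in this paper is defined as the closed convex hull of $\{\mu(A) : A\in \Sigma\}$, so I need to verify that $h_{\range{\mu}}(v) = \sup_A \dualproduct{v}{\mu(A)}$ still coincides with $\int_X (\dualproduct{v}{g})_+ \dif \mass{2}{\mu}$, which follows from the polar decomposition by choosing the optimal set $A = \{\dualproduct{v}{g} > 0\}$; and symmetrically for $h_{\range{\mu}}(-v)$. Once this representation is in hand, the dominated-convergence step and the invocation of Reshetnyak--Delladio are routine.
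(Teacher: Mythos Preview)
Your argument is correct in outline but takes a genuinely different route from the paper's. The paper proceeds directly for the given seminorm, with no auxiliary Euclidean structure and no appeal to Reshetnyak--Delladio: it invokes the Rodr\'\i guez-Piazza approximation lemma (\cref{polytopeApproximation}), which for each $\varepsilon>0$ produces finitely many $\eta_i\in\targetspace'$ and signs $\alpha_i$ with $\bigl\lvert\variation{\targetspace}{\mu}(X)-\sum_i\alpha_i\variation{\reals}{\dualproduct{\eta_i}{\mu}}(X)\bigr\rvert\le\varepsilon\variation{\targetspace}{\mu}(X)$ uniformly over all vector measures. Since Hausdorff convergence of the ranges forces $\variation{\reals}{\dualproduct{\eta_i}{\mu_n}}(X)=\diam\dualproduct{\eta_i}{\range{\mu_n}}\to\diam\dualproduct{\eta_i}{\range{\mu}}$, this yields $\limsup_n\variation{\targetspace}{\mu_n}(X)\le\tfrac{1+\varepsilon}{1-\varepsilon}\variation{\targetspace}{\mu}(X)$ at once. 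Your approach instead uses the exact zonal integral formula for a Euclidean norm (this is \cref{theorem_euclidean_representation}/\cref{lemma_mass_representation} in the paper) to get Euclidean strict convergence, and then imports Reshetnyak--Delladio to transfer it to $\norm{\targetspace}{\cdot}$. The paper's route is more self-contained and elementary; yours is conceptually clean but leans on Reshetnyak's continuity theorem, which is itself the non-trivial half of the picture.

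One correction: the Banach--Steinhaus step is both unnecessary and wrong as stated. Wide convergence alone does not bound the ranges (take $\mu_n=n\,v\,\delta_{x_n}$ with $x_n\to\infty$ in a non-compact $X$; then $\mu_n\rightharpoonup 0$ widely but $\range{\mu_n}=[0,nv]$). What actually gives the uniform bound you need for dominated convergence is the assumed Hausdorff convergence itself: $\range{\mu_n}\subset\range{\mu}+\closure{B(0,1)}$ for large $n$, and the finitely many remaining ranges are individually bounded by \cref{proposition_range_bounded}. Your concern about atoms is harmless: the support function of a set coincides with that of its closed convex hull, and $\sup_A\dualproduct{v}{\mu(A)}$ is realized on the positive Hahn--Jordan set of $\dualproduct{v}{\mu}$ regardless of atoms.
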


The assumption on the convergence is described in terms of Hausdorff distance; this choice is in practice inconsequential: since the space \(V\) is finite-dimensional, the Hausdorff convergence of compact convex sets is equivalent to the Kuratowski convergence and to the convergence of the support functions \cite{Salinetti}*{Corollaries 3A and P4.A} (see also \cite{Lohne_Zalinescu_2006}).

The converse statement of \cref{theorem_range_implies_mass} does not hold in general.
In fact, the strict convexity of the norm appears as a necessary condition to have
range convergence:

\begin{proposition}
\label{proposition_strict_convexity}
If the locally compact separable metric space \(X\) is not discrete and if the seminorm \(\norm{\targetspace}{\cdot}\) is not strictly convex, then there exists a sequence \((\mu_n)_{n \in \integers}\) of Borel vector measures 
from \(X\) to \(V\) converging widely to some vector measure \(\mu\) such that \((\mass{\targetspace}{\mu_n})_{n \in \integers}\) converges to \(\mass{\targetspace}{\mu}\) but 
\((\range{\mu_n})_{n \in \integers}\) is constant and different from \(\range{\mu}\).
\end{proposition}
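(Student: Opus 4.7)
The plan is to exploit the failure of strict convexity to produce two linearly independent vectors for which the triangle inequality is an equality, and then to attach them to a pair of atoms allowed to collide. The failure of strict convexity furnishes $v_1, v_2 \in V$ with $\norm{\targetspace}{v_1}, \norm{\targetspace}{v_2} > 0$, $\norm{\targetspace}{v_1 + v_2} = \norm{\targetspace}{v_1} + \norm{\targetspace}{v_2}$, and such that $v_1$ and $v_2$ do not lie on a common ray through $0$. If $v_2 = \lambda v_1$ for some $\lambda \in \reals$, dividing the additivity relation by $\norm{\targetspace}{v_1} > 0$ gives $|1 + \lambda| = 1 + |\lambda|$, which forces $\lambda \geq 0$, hence $\lambda > 0$ since $\norm{\targetspace}{v_2} > 0$; this contradicts the non-collinearity of $v_1$ and $v_2$. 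Therefore $v_1$ and $v_2$ are linearly independent.

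Next, non-discreteness of $X$ provides a non-isolated point $x_0$ and a sequence $(x_n)_{n \in \integers}$ in $X \setminus \{x_0\}$ converging to $x_0$. Set
\begin{equation*}
  \mu_n \defeq v_1 \delta_{x_0} + v_2 \delta_{x_n},
  \qquad
  \mu \defeq (v_1 + v_2)\delta_{x_0}.
\end{equation*}
For each $\varphi \in C_c(X, \reals)$, continuity of $\varphi$ at $x_0$ gives $\int_X \varphi \dif \mu_n = \varphi(x_0) v_1 + \varphi(x_n) v_2 \to \varphi(x_0)(v_1 + v_2) = \int_X \varphi \dif \mu$, so $\mu_n \to \mu$ widely. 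Since $\mu_n$ is carried by the two distinct atoms $x_0$ and $x_n$, the partition separating them yields $\mass{\targetspace}{\mu_n} = \norm{\targetspace}{v_1} + \norm{\targetspace}{v_2}$, and by the choice of $v_1, v_2$ this coincides with $\norm{\targetspace}{v_1 + v_2} = \mass{\targetspace}{\mu}$; the masses are thus constant in $n$ and trivially converge.

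Finally, classifying the $\Sigma$-measurable subsets of $X$ by their intersection with $\{x_0, x_n\}$ gives
\begin{equation*}
  \range{\mu_n} = \cchull{\{0, v_1, v_2, v_1 + v_2\}},
  \qquad
  \range{\mu} = \cchull{\{0, v_1 + v_2\}}.
\end{equation*}
The linear independence of $v_1$ and $v_2$ established in the first paragraph makes $\range{\mu_n}$ a nondegenerate parallelogram, independent of $n$ and strictly containing the diagonal segment $\range{\mu}$. The main obstacle is the extraction step at the beginning, namely upgrading the failure of strict convexity from mere non-positive-collinearity to genuine linear independence of $v_1$ and $v_2$; once this algebraic point is settled, the rest of the construction is elementary bookkeeping on point masses.
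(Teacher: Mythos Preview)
Your proof is correct and follows essentially the same construction as the paper: two Dirac masses with linearly independent weights colliding, exploiting equality in the triangle inequality to keep the total variation constant while the range degenerates from a parallelogram to its diagonal. The only cosmetic difference is that the paper's definition of strict convexity (linearly independent vectors always satisfy the strict triangle inequality) hands you the linear independence of $v_1,v_2$ directly, so your first paragraph is redundant in this context.
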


We prove that strict convexity is in fact a necessary and sufficient condition for
the equivalence between range convergence and mass convergence:

\begin{theorem}
\label{theorem_Euclidean}
Let \(X\) be a locally compact separable metric  space and \((\targetspace, \norm{\targetspace}{\cdot})\) be a finite-dimensional Euclidean space. If the
norm \(\norm{\targetspace}{\cdot}\) is strictly convex and if \((\mu_n)_{n\in\integers}\) is a sequence of Borel vector measures from \(X\) to \(V\) that converges widely to some vector measure \(\mu\),
then
  \begin{equation*} \lim_{n\to\infty}\range{\mu_n}=\range{\mu} \end{equation*}
if and only if
  \begin{equation*} \lim_{n\to\infty}\mass{\targetspace}{\mu_n} = \mass{\targetspace}{\mu} .\end{equation*}
\end{theorem}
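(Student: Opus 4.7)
The forward implication is contained in \cref{theorem_range_implies_mass}, which uses no strict convexity, so only the converse requires a new argument. My plan is to translate range convergence into pointwise convergence of support functions and reduce the claim to the one-dimensional equivalence of range and mass convergence noted just after \eqref{eq_range_1d}.

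The key identity is that for every $\xi\in\dualspace$,
\begin{equation*}
  h_{\range{\mu}}(\xi)
  = \sup_{A\in\Sigma}\dualproduct{\xi}{\mu(A)}
  = \bigl(\dualproduct{\xi}{\mu}\bigr)^+(X),
\end{equation*}
the supremum being attained on the positive part of the Hahn decomposition of the scalar signed measure $\dualproduct{\xi}{\mu}$. Since $V$ is finite-dimensional, Hausdorff convergence of uniformly bounded compact convex sets is equivalent to pointwise convergence of their support functions: the inclusion $\range{\mu_n}\subset\overline{B}\bigl(0,\mass{\targetspace}{\mu_n}\bigr)$ together with the boundedness of $(\mass{\targetspace}{\mu_n})_{n\in\integers}$ makes the family $(h_{\range{\mu_n}})_{n\in\integers}$ equi-Lipschitz on $\dualspace$, so pointwise convergence automatically upgrades to uniform convergence on the unit sphere. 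The problem therefore reduces to proving $\bigl(\dualproduct{\xi}{\mu_n}\bigr)^+(X)\to\bigl(\dualproduct{\xi}{\mu}\bigr)^+(X)$ for each fixed $\xi\in\dualspace$.

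To that end, fix $\xi\in\dualspace$. By linearity of the integral, $\dualproduct{\xi}{\mu_n}\to\dualproduct{\xi}{\mu}$ widely as scalar signed measures. The Reshetnyak--Delladio transfer recalled in the introduction asserts that mass convergence in the strictly convex norm $\norm{\targetspace}{\cdot}$ propagates to mass convergence in every seminorm on $V$; applied to $v\mapsto\abs{\dualproduct{\xi}{v}}$ it yields $\abs{\dualproduct{\xi}{\mu_n}}(X)\to\abs{\dualproduct{\xi}{\mu}}(X)$. Combined with the wide convergence, the scalar equivalence of range and mass convergence following \eqref{eq_range_1d} gives $\bigl(\dualproduct{\xi}{\mu_n}\bigr)^\pm(X)\to\bigl(\dualproduct{\xi}{\mu}\bigr)^\pm(X)$, closing the argument.

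The step I expect to be the principal obstacle is the Reshetnyak--Delladio transfer itself: it is the only point at which strict convexity of $\norm{\targetspace}{\cdot}$ enters in an essential way, and \cref{proposition_strict_convexity} already shows that without strict convexity the converse implication fails, precisely through suitable one-dimensional projections. One feature worth noting is that the Euclidean hypothesis of the theorem appears dispensable in this plan: only finite-dimensionality and strict convexity seem to be needed, which suggests that the authors either rely on a genuinely different route or that the Euclidean assumption is retained merely for compatibility with the monotonicity results announced in the abstract.
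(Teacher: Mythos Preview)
Your argument is correct and structurally coincides with the paper's principal proof: the paper reduces range convergence to the one–dimensional statement $\variation{\reals}{\dualproduct{\eta}{\mu_n}}(X)\to\variation{\reals}{\dualproduct{\eta}{\mu}}(X)$ for every $\eta$ (their \cref{lemma_pointwise_to_hausdorff}, which is your support-function equivalence), and then obtains that convergence from strict convexity via Reshetnyak's indicator-measure machinery (their \cref{theorem_strict_convexity_equivalence}). The only difference is that you invoke the Reshetnyak--Delladio transfer as a black box quoted from the introduction, whereas the paper reproves exactly that step; your observation that the Euclidean hypothesis is dispensable is also borne out, since \cref{theorem_strict_convexity_equivalence} assumes only strict convexity. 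The paper does offer a genuinely different second proof through the zonal representation \cref{lemma_mass_representation}, where the Euclidean structure (or $\dim V=2$) is actually used to write $\variation{\targetspace}{\mu}(X)=\int_{\sphere}\variation{\reals}{\dualproduct{\eta}{\mu}}(X)\dif\sigma(\eta)$ and then combine Fatou with the equi-Lipschitz estimate \eqref{eq_Ohr8giaGhe}; that route avoids the disintegration apparatus entirely but trades generality for the zonoid hypothesis on the dual ball.
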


The above result completely characterizes the strict convergence in terms of
range convergence. In comparison to mass convergence, the definition of the rank
depends on \(\targetspace\) only as a \emph{topological vector space},
and does not require any specific choice of norm.

The proof of \cref{theorem_Euclidean} follows a more general
approach due to Reshetnyak~\cite{Reshetnyak}, relying on
layerwise decomposition technique.

\medskip

\Cref{theorem_Euclidean} shows that mass convergence implies range convergence
in a strictly convex setting. It is also known~\cite{Rodriguez_Piazza} that
equality of the range implies equality of the variation. 
This leads to the question whether the mass of a measure depends monotonically on its range,
that is, whether the inclusion of sets \(\range{\nu} \subseteq \range{\mu}\)
does imply the inequality \(\mass{\targetspace}{\nu} \le \mass{\targetspace}{\mu}\).

In the case where the norm \(\norm{\targetspace}{\cdot}\) is Euclidean,
we prove such a relation:

\begin{theorem}
\label{mass_monotony}
Assume that \(\targetspace\) is a finite-dimensional vector space, equipped with a semi-norm \(\norm{\targetspace}{\cdot}\).
If either \(\dim V \le 2\) or \(\norm{\targetspace}{\cdot}\) is Euclidean, then 
if \(\Sigma\) is a \(\sigma\)--algebra on \(X\) and the vector measures \(\nu, \mu: \Sigma\to\targetspace\) satisfy
\begin{equation*} 
  \range{\nu} \subseteq \range{\mu}  ,
\end{equation*}
then
\begin{equation*} 
  \mass{\targetspace}{\nu} \le \mass{\targetspace}{\mu} .
\end{equation*}
If moreover the norm \(\norm{\targetspace}{\cdot}\) is strictly convex, equality in the above conclusion occurs if and only if the ranges
are equal.
\end{theorem}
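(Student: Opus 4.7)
\medskip

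\noindent\textbf{Plan.} The strategy is to represent the total variation \(\mass{\targetspace}{\mu}\) as an integral, over a suitable positive measure on the dual sphere, of the support function of \(\range{\mu}\). Once such a representation is available, monotonicity of the support function under set inclusion immediately yields the inequality \(\mass{\targetspace}{\nu} \le \mass{\targetspace}{\mu}\). The representation itself rests on an identity of the form
\begin{equation*}
  \norm{\targetspace}{v} = \int_{\sphere^\star} \bigabs{\dualproduct{\xi}{v}} \dif \rho(\xi)
\end{equation*}
valid for every \(v \in \targetspace\), where \(\rho\) is a finite even positive Borel measure on the unit sphere \(\sphere^\star \subset \dualspace\) of the dual norm. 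This identity says exactly that the dual unit ball is a zonoid, and is where the hypotheses on \(\targetspace\) intervene: the Euclidean case follows from the classical spherical integral formula for the Euclidean norm, with \(\rho\) proportional to the uniform measure on \(\sphere^\star\), while in dimension at most two it follows from the classical theorem of Blaschke according to which every centrally symmetric convex body in the plane is a zonoid.

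Taking the identity above as granted, I would combine it with Fubini's theorem and with the scalar identity \(h_{\range{\mu}}(\xi) + h_{\range{\mu}}(-\xi) = \abs{\dualproduct{\xi}{\mu}}(X)\) --- a direct consequence of the definitions of the support function and of the total variation of the signed scalar measure \(A \mapsto \dualproduct{\xi}{\mu(A)}\) --- to obtain
\begin{equation*}
  \mass{\targetspace}{\mu}
  = \int_{\sphere^\star} \bigl[\, h_{\range{\mu}}(\xi) + h_{\range{\mu}}(-\xi) \,\bigr] \dif \rho(\xi),
\end{equation*}
and the same identity with \(\nu\) in place of \(\mu\). Since \(\range{\nu} \subseteq \range{\mu}\) gives \(h_{\range{\nu}} \le h_{\range{\mu}}\) pointwise on \(\dualspace\), integration against the positive measure \(\rho\) produces \(\mass{\targetspace}{\nu} \le \mass{\targetspace}{\mu}\).

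For the equality case under strict convexity of \(\norm{\targetspace}{\cdot}\), equality of the two integrals combined with the two pointwise inequalities \(h_{\range{\nu}}(\pm \xi) \le h_{\range{\mu}}(\pm \xi)\) forces \(h_{\range{\nu}} = h_{\range{\mu}}\) at \(\rho\)-almost every point of \(\sphere^\star\); by continuity of support functions this upgrades to equality on \(\support(\rho)\), and \(\range{\nu} = \range{\mu}\) follows provided \(\support(\rho) = \sphere^\star\). The main technical obstacle I anticipate is precisely this last verification: one has to check that strict convexity of the primal norm prevents the generating measure \(\rho\) of the dual unit ball from being concentrated on a proper closed subset of \(\sphere^\star\). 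In the Euclidean case there is nothing to check. In two dimensions it amounts to analysing the Blaschke generating measure of the dual unit ball via the duality between strict convexity of \(\norm{\targetspace}{\cdot}\) and smoothness of \(\norm{\dualspace}{\cdot}\).
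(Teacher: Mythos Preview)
Your approach is correct and coincides with the paper's primary proof: both reduce to the zonal representation \(\norm{\targetspace}{v}=\int_{\sphere^\star}\abs{\dualproduct{\xi}{v}}\dif\rho(\xi)\), pass to the induced identity \(\mass{\targetspace}{\mu}=\int_{\sphere^\star}\variation{\reals}{\dualproduct{\xi}{\mu}}(X)\dif\rho(\xi)\) (your support-function expression \(h_{\range{\mu}}(\xi)+h_{\range{\mu}}(-\xi)\) is exactly \(\diam\dualproduct{\xi}{\range{\mu}}=\variation{\reals}{\dualproduct{\xi}{\mu}}(X)\)), and then use monotonicity of one-dimensional projections. For the equality case your diagnosis is also the paper's: one needs the generating measure \(\rho\) to charge every open arc, which is automatic in the Euclidean case and, in two dimensions, is shown in the paper to be \emph{equivalent} to strict convexity of \(\norm{\targetspace}{\cdot}\) (the paper argues this directly rather than through the dual smoothness you invoke, but the two routes are interchangeable).

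Two minor remarks. First, the passage from the pointwise norm identity to the mass identity is not a bare Fubini (there is no common \(\sigma\)-finite measure to integrate against); the paper proves it by a partition-refinement argument, which you should be prepared to supply. Second, for \(\dim V=2\) the paper offers an independent second proof via the formula \(\mass{\targetspace}{\mu}=\tfrac12\operatorname{Per}_{\targetspace}(\range{\mu})\) together with monotonicity of perimeter for convex sets; this bypasses the zonal representation entirely and gives the strict-inequality case through strict monotonicity of the perimeter under a strictly convex norm.
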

In higher dimensions, we do not know if such relation still holds.
\begin{open}
If \(\dim \targetspace \ge 3\) and if \(\targetspace\) is endowed with an arbitrary
(respectively strictly) convex norm, is the mass (respectively strictly) monotone with respect
to range inclusion?
\end{open}
Our approach to prove \cref{mass_monotony} when \(V\) is a Euclidean space directly derives from the a representation
formula
\begin{equation}
\label{eq_ru9Ooph9ee}
      \norm{\targetspace}{v} 
    = 
      \int_{\sphere} \abs{\dualproduct{\eta}{v}}
      \dif\sigma(\eta)
  \end{equation}
for some suitable multiple of the uniform measure \(\sigma\) on the unit sphere \(\sphere\)
of \(\dualspace\). 
We will prove that \eqref{eq_ru9Ooph9ee}
implies the representation formula
\begin{theorem}
  \label{theorem_euclidean_representation}
  Assume that \(\targetspace\) is a finite-dimensional vector space, equipped with a Euclidean norm \(\norm{\targetspace}{\cdot}\), then for every \(\sigma\)--algebra \(\Sigma\) on \(X\), every vector measure \(\mu : \Sigma \to \targetspace\) and every \(A \in \Sigma\),
  \begin{equation*} 
  \variation{\targetspace}{\mu} (A)
  = 
  \int_{\sphere} \variation{\reals}{\dualproduct{\eta}{v}} (A)
  \dif\sigma(\eta).
  \end{equation*}
\end{theorem}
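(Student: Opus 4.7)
The plan is to combine the polar decomposition of the vector measure $\mu$ with the representation formula \eqref{eq_ru9Ooph9ee} and an application of Tonelli's theorem. Since $\targetspace$ is finite-dimensional, the scalar Radon--Nikodym theorem, applied coordinatewise in any basis of $\targetspace$, yields a $\variation{\targetspace}{\mu}$-measurable map $f : X \to \targetspace$ such that $\mu = f\,\variation{\targetspace}{\mu}$ and $\norm{\targetspace}{f(x)} = 1$ for $\variation{\targetspace}{\mu}$-almost every $x \in X$.

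For each fixed $\eta \in \sphere$, the real-valued signed measure $\dualproduct{\eta}{\mu}$ then admits the density $x \mapsto \dualproduct{\eta}{f(x)}$ with respect to $\variation{\targetspace}{\mu}$, so the Hahn--Jordan decomposition of scalar measures yields $\variation{\reals}{\dualproduct{\eta}{\mu}}(A) = \int_A \abs{\dualproduct{\eta}{f(x)}} \dif\variation{\targetspace}{\mu}(x)$. Integrating this identity against $\sigma$ over $\sphere$ and swapping the order of integration by Tonelli's theorem --- which is legitimate since the integrand is nonnegative and the map $(x,\eta) \mapsto \dualproduct{\eta}{f(x)}$ is jointly measurable as the composition of the measurable $f$ with a continuous bilinear form --- I would obtain
\[
  \int_\sphere \variation{\reals}{\dualproduct{\eta}{\mu}}(A) \dif\sigma(\eta)
  =
  \int_A \Bigl( \int_\sphere \abs{\dualproduct{\eta}{f(x)}} \dif\sigma(\eta) \Bigr) \dif\variation{\targetspace}{\mu}(x).
\]
By \eqref{eq_ru9Ooph9ee} the inner integral equals $\norm{\targetspace}{f(x)}$, which equals $1$ for $\variation{\targetspace}{\mu}$-almost every $x \in A$, so the right-hand side collapses to $\variation{\targetspace}{\mu}(A)$, as desired.

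The only delicate ingredient is the polar decomposition of a vector-valued measure, but in the finite-dimensional setting it reduces to the scalar Radon--Nikodym theorem and requires no further assumption on $\targetspace$. The Euclidean hypothesis enters only through the availability of \eqref{eq_ru9Ooph9ee}, which is postulated at this stage. I therefore do not anticipate any substantive obstacle.
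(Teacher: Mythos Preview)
Your proof is correct and complete. The paper takes a different route, however. Rather than invoking the polar decomposition $\mu = f\,\variation{\targetspace}{\mu}$ and Tonelli, the paper (in its \cref{lemma_mass_representation}, of which \cref{theorem_euclidean_representation} is the Euclidean special case) argues directly from the partition definition of total variation. One inequality is immediate: for any finite disjoint family $E_1,\dots,E_m\subseteq A$, summing $\norm{\targetspace}{\mu(E_i)}=\int_\sphere\abs{\dualproduct{\eta}{\mu(E_i)}}\dif\sigma(\eta)$ and bounding the inner sum by $\variation{\reals}{\dualproduct{\eta}{\mu}}(A)$ gives the inequality~$\le$. For the reverse inequality the paper discretises $\sigma$: it covers the (compact) support of $\sigma$ by finitely many balls of radius $\varepsilon$ centred at $\eta_1,\dots,\eta_\ell$, uses the Lipschitz estimate $\bigabs{\variation{\reals}{\dualproduct{\eta}{\mu}}(E)-\variation{\reals}{\dualproduct{\eta_i}{\mu}}(E)}\le\varepsilon\,\variation{\targetspace}{\mu}(E)$, then chooses a single finite partition of $A$ that simultaneously realises the variation of each $\dualproduct{\eta_i}{\mu}$ (via a common refinement of the Hahn--Jordan partitions), and lets $\varepsilon\to0$.

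Your argument is shorter and conceptually cleaner; it trades the $\varepsilon$--discretisation for a single appeal to Radon--Nikodym and Fubini. The paper's argument, by contrast, avoids Radon--Nikodym altogether and stays at the level of finite partitions, which keeps it in the same spirit as the approximation \cref{polytopeApproximation} used elsewhere. Both proofs extend verbatim to an arbitrary zonal representation $\sigma$, not just the Euclidean one; the Euclidean hypothesis enters only through the availability of \eqref{eq_ru9Ooph9ee}, exactly as you note.
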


The representation formula \eqref{eq_ru9Ooph9ee} means that the norm is an \(L^1\)-norm in the sense of Choquet~\cite{Choquet}*{Paragraph~41},
and that the unit ball of the dual norm is a zonoid~\citelist{\cite{Choquet}\cite{Bolker_1969}*{Theorem 2.1}}.
In the two-dimensional case, any centrally symmetric convex set is a zonoid \citelist{\cite{Choquet}\cite{Bolker_1969}*{Theorem 5.4}} and, in fact, any norm has a similar representation for some positive even measure \(\sigma\) this implies a counterpart of \cref{theorem_euclidean_representation} for any two-dimensional norm. The latter characterization of zonoids fails in higher dimensions: for example the \(\ell^\infty\) norm whose 
whose dual \(\ell^1\) ball is hyperoctahedron, which is not a zonoid.

In dimension~2, one has similar representation
for any choice of norm, and strict convexity translates by density of the support
for the representing measure \(\sigma\).

\medskip

When the vector space \(V\) is two-dimensional, we improve
the geometric interplay between range convergence and mass convergence,
by drawing the following link:

\begin{theorem}
\label{theorem_perimeter_computation}
Assume that \(\targetspace\) is a two-dimensional vector space, equipped with a
seminorm \(\norm{\targetspace}{\cdot}\).
If \(\Sigma\) is a \(\sigma\)--algebra on \(X\) and if  \(\mu:\Sigma\to\targetspace\) is a vector measure, then
  \begin{equation*} \mass{\targetspace}{\mu} = \tfrac{1}{2}\,\operatorname{Per}_{\targetspace}\bigl(\range{\mu}\bigr) .
  \end{equation*}
\end{theorem}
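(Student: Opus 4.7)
The plan is to derive the identity from two complementary integral-geometric formulas in dimension two: one expressing the mass $\mass{\targetspace}{\mu}$ as an integral of total variations of one-dimensional projections, and the other expressing the anisotropic perimeter of $\range \mu$ as an integral of its widths.

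First I would reduce to the case where $\norm{\targetspace}{\cdot}$ is a norm by quotienting out its kernel, since both the mass and the perimeter of the range descend to this quotient. Next I would invoke the classical theorem of Choquet and Bolker cited in the excerpt: in dimension two every centrally symmetric convex body is a zonoid, and applied to the dual unit ball of $\norm{\targetspace}{\cdot}$ this produces a positive even Radon measure $\sigma$ on the sphere $\sphere \subset \dualspace$ such that
\begin{equation*}
  \norm{\targetspace}{v} = \int_{\sphere} \abs{\dualproduct{\eta}{v}} \dif \sigma(\eta) \qquad \text{for every } v \in \targetspace.
\end{equation*}
Running the layerwise decomposition argument behind \cref{theorem_euclidean_representation}, which relies only on such an $L^1$-representation of the norm and not on any inner-product structure, then yields the two-dimensional counterpart already announced in the excerpt:
\begin{equation*}
  \mass{\targetspace}{\mu} = \int_{\sphere} \variation{\reals}{\dualproduct{\eta}{\mu}}(X) \dif \sigma(\eta).
\end{equation*}

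For each fixed $\eta$, linearity gives $\range(\dualproduct{\eta}{\mu}) = \dualproduct{\eta}{\range \mu}$ as a closed interval of $\reals$, and the one-dimensional identity~\eqref{eq_range_1d} identifies $\variation{\reals}{\dualproduct{\eta}{\mu}}(X)$ with the length of this interval, that is, with the width $w(\range \mu, \eta) = h_{\range \mu}(\eta) + h_{\range \mu}(-\eta)$ of $\range \mu$ in direction $\eta$. Therefore
\begin{equation*}
  \mass{\targetspace}{\mu} = \int_{\sphere} w(\range \mu, \eta) \dif \sigma(\eta).
\end{equation*}

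The last step is to establish the Cauchy--Crofton identity $\operatorname{Per}_{\targetspace}(K) = 2 \int_{\sphere} w(K, \eta) \dif \sigma(\eta)$ for every planar convex body $K$. I would parameterize $\partial K$ by a Lipschitz loop $\gamma$, expand the norm of the tangent vector via the representation of $\norm{\targetspace}{\cdot}$, and apply Fubini to rewrite the perimeter as an integral over $\sphere$ of the total variation of the real-valued function $\dualproduct{\eta}{\gamma(\cdot)}$. Since $\gamma$ traces $\partial K$ exactly once, this projection attains every value between its minimum and maximum precisely twice along the loop, producing the factor $2 w(K, \eta)$. Combining this with the previous identity for $\mass{\targetspace}{\mu}$ delivers the desired equality. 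The most delicate point is verifying that the proof of \cref{theorem_euclidean_representation} adapts unchanged when the uniform measure on the Euclidean sphere is replaced by an arbitrary positive even Radon measure $\sigma$; this should be essentially automatic, since that argument is driven purely by the integral representation of the norm and not by any specific feature of the Euclidean structure.
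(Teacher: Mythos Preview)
Your argument is correct, but it follows a genuinely different route from the paper. The paper proves \cref{theorem_perimeter_computation} by an elementary combinatorial lemma (\cref{lemma_perimeter_polygon}): for any vectors $v_1,\dots,v_m$ in a two-dimensional seminormed space,
\[
\sum_{n=1}^m \norm{\targetspace}{v_n}=\tfrac{1}{2}\operatorname{Per}_\targetspace\bigl([0,v_1]+\dotsb+[0,v_m]\bigr),
\]
proved by a one-line induction on $m$. Both inequalities then follow directly: partitions $E_1,\dots,E_m$ produce zonotopes $[0,\mu(E_1)]+\dotsb+[0,\mu(E_m)]$ inside $\range\mu$, giving $\mass{\targetspace}{\mu}\le\tfrac{1}{2}\operatorname{Per}_\targetspace(\range\mu)$; conversely, any inscribed polygon $P\subset\range\mu$ sits inside such a zonotope, giving the reverse inequality. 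No zonal representation, no integral geometry, no boundary parameterization.

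Your approach instead passes through the zonal representation and a Cauchy--Crofton identity. This is legitimate, and indeed the paper remarks that the Cauchy--Crofton formula is a \emph{consequence} of combining \cref{theorem_perimeter_computation} with \cref{lemma_mass_representation}; you are running that implication backwards. Two small points: first, the argument behind \cref{theorem_euclidean_representation} is not a layerwise decomposition but the direct partition argument of \cref{lemma_mass_representation}, already stated for an arbitrary zonal representation $\sigma$, so no adaptation is needed. Second, your Lipschitz-loop proof of the Cauchy--Crofton step should be checked against the degenerate case where $\range\mu$ is a segment (so $\partial K$ is not a Jordan curve); this is easy but should be mentioned. The paper's approach avoids both issues by staying purely at the level of polygons and Minkowski sums, which also makes it self-contained and independent of the zonoid machinery.
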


Here \(\operatorname{Per}_{\targetspace} (\range{\mu})\) is the perimeter of the closed convex set \(\range{\mu}\),
computed with respect to the seminorm \(\norm{\targetspace}{\cdot}\).
When the target space \(\targetspace\) is equipped with a strictly convex norm,
the perimeter becomes strictly monotone and
\cref{theorem_Euclidean} may be derived from
\cref{theorem_perimeter_computation}.
This kind of isoperimetric inequalities for convex sets were also investigated in~\cite{Stefani_2018}.
\Cref{theorem_perimeter_computation} is the extension of the one-dimensional formula \(\mass{\reals}{\mu} = \diam \range{\mu}\) that follows from \eqref{eq_range_1d}.

\medskip

For the two-dimensional case,
\cref{theorem_Euclidean} may be derived also from the integral representation \eqref{eq_ru9Ooph9ee}.

\section{Total variation and range}

\subsection{Vector measures}

We assume that \(X\) is a space endowed with a \(\sigma\)--algebra \(\Sigma\) and that the space \(V\) is a finite-dimensional vector space on the real numbers. 
The space \(\targetspace^*\) is the algebraic dual of \(V\) and \(\dualproduct{\eta}{v} = \eta (v)\) denotes the duality product defined by the action of the linear form \(\eta \in \targetspace^*\) on the vector \(v \in \targetspace\).

A \emph{vector measure} \(\mu:\Sigma\to\targetspace\) is a countably additive function; when \(\targetspace = \reals\) it is a (finite) \emph{signed measure}.
A vector measure \(\mu\) induces definitions of integrals 
\begin{align*}
  \int_{A} f \dif \mu &\in \targetspace &
  & \text{ and } &
  \int_{A} \dualproduct{g}{\dif \mu} \in \reals.
\end{align*}
for all \(\Sigma\)--measurable and bounded functions \(f : X \to \reals\) and \(g : X \to \targetspace^*\) and sets \(A \in \Sigma\).
These integrals satisfy for every \(E \in \Sigma\) and \(\eta \in \targetspace^*\),
\begin{align*}
  \int_{A} \chi_E \dif \mu & = \mu (A \cap E) &
  & \text{ and } &
  \int_{A} \dualproduct{\eta\, \chi_E}{\dif \mu} & = \dualproduct{\eta}{\mu (A \cap E)},
\end{align*}
and the usual additivity, linearity and dominated convergence properties.

\subsection{Total variation}
The total variation of a vector measure is defined as follows (see \cite{Ambrosio_Fusco_Pallara_2000}*{Definition 1.4 (b)}). 

\begin{definition}
\label{definition_mass}
Let \(\Sigma\) be a \(\sigma\)--algebra on \(X\).
The total variation of a vector measure \(\mu : \Sigma \to X\) with respect to a seminorm \(\norm{\targetspace}{\cdot}\) is 
the function \(\variation{\targetspace}{\mu}: \Sigma \to [0, +\infty]\) defined for each set \(A \in \Sigma\) by 
  \begin{multline*}
 \variation{\targetspace}{\mu}(A)\\[-.5em]
  \shoveleft{\defeq 
  \sup\Bigg\{\sum_{n=1}^m\norm{\targetspace}{\mu(E_n)} :
  	E_1,\dots,E_m \in\Sigma, E_1,\dots,E_m\subseteq A}\\[-1.7em]
  \text{and the sets } E_1,\dots,E_m \text{are pairwise disjoint }\Bigg\} .
  \end{multline*}
\end{definition}

The total variation enjoys properties that make of it a finite measure \cite{Ambrosio_Fusco_Pallara_2000}*{Theorem 1.6}.

\begin{proposition}[Finiteness]
\label{proposition_variation_finite}
Let \(\Sigma\) be a \(\sigma\)--algebra on \(X\)
and let \(\mu : \Sigma \to V\) be a vector measure. For every \(A \in \Sigma\), \(\variation{\targetspace}{\mu} (A) < +\infty\). 
\end{proposition}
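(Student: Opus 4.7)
\emph{Plan.} My strategy is to reduce the statement to the classical finiteness of the total variation of a finite real-valued signed measure, by exploiting the finite dimension of \(V\).

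First, I would pick a basis \((e_1, \dots, e_N)\) of \(\targetspace\) with dual basis \((\eta_1, \dots, \eta_N) \subset \dualspace\), and define, for each index \(i\), a real-valued signed measure \(\mu_i : \Sigma \to \reals\) by \(\mu_i (E) \defeq \dualproduct{\eta_i}{\mu (E)}\); countable additivity of \(\mu_i\) is inherited from \(\mu\) by linearity of \(\eta_i\). Since every seminorm on a finite-dimensional space is continuous for the coordinate topology, there exists \(C > 0\) with \(\norm{\targetspace}{v} \le C \sum_{i=1}^N \abs{\dualproduct{\eta_i}{v}}\) for every \(v \in \targetspace\), so that for any finite pairwise disjoint family \(E_1, \dots, E_m \subseteq A\) in \(\Sigma\),
\begin{equation*}
\sum_{k=1}^m \norm{\targetspace}{\mu (E_k)}
\le C \sum_{i=1}^N \sum_{k=1}^m \abs{\mu_i (E_k)}
\le C \sum_{i=1}^N \variation{\reals}{\mu_i} (A).
\end{equation*}
Taking the supremum over such partitions of \(A\) yields \(\variation{\targetspace}{\mu}(A) \le C \sum_{i=1}^N \variation{\reals}{\mu_i} (A)\), and it suffices to prove the finiteness statement for finite real-valued signed measures.

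Second, for a finite real-valued signed measure \(\nu\), I would argue by contradiction. Assume \(\variation{\reals}{\nu} (A) = +\infty\). The crucial auxiliary statement is a splitting lemma: \(A\) admits a measurable partition \(A = A' \sqcup A''\) with \(\abs{\nu (A')} \ge 1\) and \(\variation{\reals}{\nu} (A'') = +\infty\). To establish it, I would choose a finite pairwise disjoint family \(E_1, \dots, E_m \subseteq A\) with \(\sum_k \abs{\nu (E_k)} > 2 (\abs{\nu (A)} + 1)\), group the \(E_k\) according to the sign of \(\nu (E_k)\) into \(E^+\) and \(E^-\), and observe that \(\nu (E^+) - \nu (E^-) = \sum_k \abs{\nu(E_k)} > 2(\abs{\nu(A)}+1)\), so at least one of \(\nu(E^+)\) or \(-\nu(E^-)\) exceeds \(\abs{\nu(A)} + 1\); taking the corresponding set together with its complement in \(A\) produces a partition whose two pieces both carry absolute \(\nu\)-mass at least \(1\), and additivity of \(\variation{\reals}{\nu}\) as a \([0,+\infty]\)-valued measure forces one of them to retain infinite variation. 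Iterating the splitting lemma produces a pairwise disjoint sequence \((A_n)_{n \in \integers}\) in \(\Sigma\) with \(\abs{\nu (A_n)} \ge 1\) for all \(n\); this contradicts the countable additivity of \(\nu\) evaluated at \(\bigcup_n A_n\), since the series \(\sum_n \nu(A_n)\) cannot converge.

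The main obstacle is the splitting lemma, where the pigeonhole-style comparison of positive and negative contributions has to be calibrated so that \emph{both} pieces of the partition carry genuine mass and one of them retains the blow-up of the total variation, so that the recursive construction never stalls.
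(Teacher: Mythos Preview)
Your argument is correct and follows the classical route (coordinate reduction to signed measures, then the Rudin-style splitting lemma producing a disjoint sequence whose terms violate countable additivity). The paper does not give its own proof of this proposition: it is stated as a known fact with a reference to \cite{Ambrosio_Fusco_Pallara_2000}*{Theorem 1.6}, so there is no in-paper argument to compare against. Your coordinate-reduction step is, incidentally, the same device the paper deploys later in the proofs of \cref{proposition_range_bounded} and \cref{proposition_boundedness}.
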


\begin{proposition}[Monotonicity]
\label{proposition_variation_monotone}
Let \(\Sigma\) be a \(\sigma\)--algebra on \(X\), let \(\mu:\Sigma\to\targetspace\) be a vector measure and let \(A, B\in\Sigma\).
If \(A \subseteq B\), then
  \begin{equation*} 
      \variation{\targetspace}{\mu}(A) 
    \subseteq 
      \variation{\targetspace}{\mu} (B)
      .
  \end{equation*}
\end{proposition}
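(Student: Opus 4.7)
The conclusion as typeset contains a harmless typo (the symbol should be $\le$ rather than $\subseteq$, since $\variation{\targetspace}{\mu}(A)$ is a nonnegative real number, not a set); I read the statement as the inequality $\variation{\targetspace}{\mu}(A) \le \variation{\targetspace}{\mu}(B)$. With that reading, the plan is to argue directly from \cref{definition_mass} that the supremum defining $\variation{\targetspace}{\mu}(A)$ is taken over a \emph{smaller} family of admissible partitions than the one defining $\variation{\targetspace}{\mu}(B)$.

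Concretely, I would fix pairwise disjoint sets $E_1,\dots,E_m \in \Sigma$ with $E_1,\dots,E_m \subseteq A$, and simply observe that since $A \subseteq B$ by assumption, we also have $E_1,\dots,E_m \subseteq B$. The family $(E_1,\dots,E_m)$ is therefore admissible in the supremum defining $\variation{\targetspace}{\mu}(B)$, so
\begin{equation*}
  \sum_{n=1}^m \norm{\targetspace}{\mu(E_n)} \le \variation{\targetspace}{\mu}(B).
\end{equation*}
Taking the supremum of the left-hand side over all admissible families for $A$ yields $\variation{\targetspace}{\mu}(A) \le \variation{\targetspace}{\mu}(B)$.

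There is no real obstacle here: the statement is an immediate monotonicity property of a supremum whose index set grows with the ambient set. The only point that deserves a brief mention, if desired, is that \cref{proposition_variation_finite} ensures both quantities are finite, so the inequality is not vacuously about $+\infty$; but even without that remark, the inequality holds as an inequality in $[0,+\infty]$.
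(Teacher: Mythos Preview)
Your reading of the typo and your argument are both correct: monotonicity is immediate from \cref{definition_mass} since every admissible family of pairwise disjoint subsets of $A$ is also an admissible family for $B$. The paper does not actually supply a proof of this proposition; it lists it (together with finiteness, countable additivity, the triangle inequality, and homogeneity) as a standard property of the total variation, with a reference to \cite{Ambrosio_Fusco_Pallara_2000}*{Theorem 1.6}. Your short direct argument is exactly what one would write if a proof were required, and nothing more is needed.
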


\begin{proposition}[Countable additivity]
\label{proposition_total_variation_countable_additive}
Let \(\Sigma\) be a \(\sigma\)--algebra on \(X\) and let \(\mu:\Sigma\to\targetspace\) be a vector measure.
For every sequence \((A_n)_{n \in \integers}\) of disjoint sets in \(\Sigma\), 
\begin{equation*} 
  \variation{\targetspace}{\mu}{\bigl(\textstyle \bigcup_{n\in\integers}A_n\bigr)}
  =
  \sum_{n\in\integers} \variation{\targetspace}{\mu}(A_n).
\end{equation*}
\end{proposition}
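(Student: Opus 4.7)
The plan is the standard two-sided inequality argument for total variation, with the vector-valued sums handled via the triangle inequality of the seminorm and the countable additivity of \(\mu\). Let \(A \defeq \bigcup_{n \in \integers} A_n\).

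For the inequality \(\sum_{n \in \integers} \variation{\targetspace}{\mu}(A_n) \le \variation{\targetspace}{\mu}(A)\), I would fix \(N \in \integers\) and, for each \(n \le N\), select a finite pairwise disjoint family \((E^n_1, \dots, E^n_{k_n})\) in \(\Sigma\) with \(E^n_j \subseteq A_n\). Since the \(A_n\) are pairwise disjoint, the concatenated family \((E^n_j)_{n \le N,\, j \le k_n}\) is still a finite pairwise disjoint family of subsets of \(A\), so \(\sum_{n=1}^N \sum_{j=1}^{k_n} \norm{\targetspace}{\mu(E^n_j)} \le \variation{\targetspace}{\mu}(A)\). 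Taking the supremum over the admissible partitions for each fixed \(n\) yields \(\sum_{n=1}^N \variation{\targetspace}{\mu}(A_n) \le \variation{\targetspace}{\mu}(A)\), and then I let \(N \to \infty\).

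For the reverse inequality \(\variation{\targetspace}{\mu}(A) \le \sum_{n \in \integers} \variation{\targetspace}{\mu}(A_n)\), I would start from an arbitrary admissible pairwise disjoint family \(E_1, \dots, E_m \subseteq A\). For each \(j\), the decomposition \(E_j = \bigcup_{n \in \integers}(E_j \cap A_n)\) is a disjoint countable union, so by countable additivity of \(\mu\) and the triangle inequality of the seminorm \(\norm{\targetspace}{\cdot}\),
\begin{equation*}
  \norm{\targetspace}{\mu(E_j)} \le \sum_{n \in \integers} \norm{\targetspace}{\mu(E_j \cap A_n)}.
\end{equation*}
Summing over \(j\), swapping the (positive term) sums, and observing that for each fixed \(n\) the sets \((E_j \cap A_n)_{j \le m}\) form a finite pairwise disjoint family in \(A_n\), I obtain
\begin{equation*}
  \sum_{j=1}^m \norm{\targetspace}{\mu(E_j)} \le \sum_{n \in \integers} \sum_{j=1}^m \norm{\targetspace}{\mu(E_j \cap A_n)} \le \sum_{n \in \integers} \variation{\targetspace}{\mu}(A_n).
\end{equation*}
Taking the supremum over admissible partitions on the left closes the argument.

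No step is really delicate here; the only mild subtlety is the exchange of the finite sum over \(j\) with the countable sum over \(n\), which is legitimate because all terms are nonnegative (Tonelli/Fubini for counting measures). Finiteness of each \(\variation{\targetspace}{\mu}(A_n)\), ensured by \cref{proposition_variation_finite}, also guarantees that the equality makes sense in \([0, +\infty]\) without ambiguity, though the proof itself does not logically need it.
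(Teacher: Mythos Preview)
Your proof is correct and is the standard two-inequality argument for countable additivity of total variation. The paper does not actually give a proof of this proposition; it simply records it as a known fact with a reference to \cite{Ambrosio_Fusco_Pallara_2000}*{Theorem 1.6}, whose proof is essentially the argument you wrote.
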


We also have some algebraic properties:
\begin{proposition}[Triangle inequality]
\label{proposition_total_variation_triangle}
Let \(\Sigma\) be a \(\sigma\)--algebra on \(X\) and let \(\mu, \nu:\Sigma\to\targetspace\) be vector measures.
For every \(A\in\Sigma\),
  \begin{equation*}   
    \variation{\targetspace}{\mu + \nu}(A)
    \leq \variation{\targetspace}{\mu}(A) + \variation{\targetspace}{\mu}(A) .
  \end{equation*}
\end{proposition}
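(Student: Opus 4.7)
The proof is a direct unpacking of \cref{definition_mass} combined with the triangle inequality of the seminorm \(\norm{\targetspace}{\cdot}\), so I expect no real obstacle — the only thing to double-check is that \(\mu+\nu\) is itself a vector measure, which follows immediately from the fact that a pointwise sum of two countably additive set functions is again countably additive.

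The plan is as follows. First, I fix an arbitrary finite pairwise disjoint family \(E_1,\dots,E_m \in \Sigma\) with \(E_n \subseteq A\) for every \(n\). Since \((\mu+\nu)(E_n) = \mu(E_n) + \nu(E_n)\), the triangle inequality of the seminorm yields, for each \(n \in \{1,\dots,m\}\),
\begin{equation*}
  \norm{\targetspace}{(\mu+\nu)(E_n)}
  \le
  \norm{\targetspace}{\mu(E_n)} + \norm{\targetspace}{\nu(E_n)}.
\end{equation*}
Summing from \(n=1\) to \(m\) and regrouping the sums separately for \(\mu\) and \(\nu\) gives
\begin{equation*}
  \sum_{n=1}^m \norm{\targetspace}{(\mu+\nu)(E_n)}
  \le
  \sum_{n=1}^m \norm{\targetspace}{\mu(E_n)}
  +
  \sum_{n=1}^m \norm{\targetspace}{\nu(E_n)} .
\end{equation*}

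Second, by \cref{definition_mass} applied to \(\mu\) and to \(\nu\) separately — since \(E_1,\dots,E_m\) is an admissible disjoint family inside \(A\) for each of them — each of the two sums on the right is bounded by \(\variation{\targetspace}{\mu}(A)\) and \(\variation{\targetspace}{\nu}(A)\) respectively. Taking the supremum on the left over all finite \(\Sigma\)--measurable pairwise disjoint families in \(A\) then yields
\begin{equation*}
  \variation{\targetspace}{\mu+\nu}(A)
  \le
  \variation{\targetspace}{\mu}(A) + \variation{\targetspace}{\nu}(A),
\end{equation*}
which is the desired inequality (correcting the evident typo \(\variation{\targetspace}{\mu}(A) + \variation{\targetspace}{\mu}(A)\) in the statement, where the second term should read \(\variation{\targetspace}{\nu}(A)\)). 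No compactness, measurability refinement, or approximation argument is needed, since the supremum in the definition of total variation already ranges over exactly the partitions we used.
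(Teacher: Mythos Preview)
Your proof is correct and is exactly the standard argument one would expect. The paper itself does not supply a proof of this proposition: it is stated without proof among the basic algebraic properties of the total variation (alongside finiteness, monotonicity, countable additivity, and homogeneity), with a reference to \cite{Ambrosio_Fusco_Pallara_2000}. You also correctly spotted the typo in the displayed inequality, where the second \(\variation{\targetspace}{\mu}(A)\) should be \(\variation{\targetspace}{\nu}(A)\).
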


\begin{proposition}[Homogeneity]
\label{proposition_total_variation_homogeneous}
Let \(\Sigma\) be a \(\sigma\)--algebra on \(X\) and let \(\mu:\Sigma\to\targetspace\) be a vector measure.
For every \(\lambda \in \reals\) and every \(A\in\Sigma\),
  \begin{equation*}   
    \variation{\targetspace}{\lambda \mu}(A)
    = \abs{\lambda} \,\variation{\targetspace}{\mu}(A).
  \end{equation*}
\end{proposition}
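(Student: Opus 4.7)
The plan is to derive the homogeneity property directly from the definition of the total variation as a supremum, combined with the absolute homogeneity of the seminorm \(\norm{\targetspace}{\cdot}\) on \(V\). The argument is essentially a one-liner, but I will structure it as two cases for clarity, since \(\lambda = 0\) deserves a separate line.

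First I would fix \(\lambda \in \reals\) and \(A \in \Sigma\), and consider any finite pairwise disjoint family \(E_1, \dots, E_m \in \Sigma\) with \(E_1, \dots, E_m \subseteq A\). Since \((\lambda \mu)(E_n) = \lambda \, \mu(E_n)\) by definition, the homogeneity of the seminorm yields \(\norm{\targetspace}{(\lambda \mu)(E_n)} = \abs{\lambda}\, \norm{\targetspace}{\mu(E_n)}\), so that
\[
  \sum_{n=1}^m \norm{\targetspace}{(\lambda \mu)(E_n)}
  = \abs{\lambda} \sum_{n=1}^m \norm{\targetspace}{\mu(E_n)}.
\]
Taking the supremum of both sides over all such finite disjoint families \(E_1, \dots, E_m \subseteq A\)—noting that this set of admissible families depends only on \(\Sigma\) and \(A\), not on \(\mu\) or \(\lambda\)—gives \(\variation{\targetspace}{\lambda \mu}(A) = \abs{\lambda}\,\variation{\targetspace}{\mu}(A)\) whenever \(\lambda \neq 0\). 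The scalar \(\abs{\lambda}\) passes through the supremum harmlessly because it is a nonnegative real factor and \(\variation{\targetspace}{\mu}(A) < +\infty\) by \cref{proposition_variation_finite}.

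For the remaining case \(\lambda = 0\), the measure \(\lambda \mu\) is identically the zero vector, so every sum in the defining supremum vanishes, and both sides equal zero. I do not anticipate any real obstacle: the proof relies only on the pointwise homogeneity of the seminorm and the fact that a nonnegative scalar factors out of both a finite sum and a supremum, and the finiteness of \(\variation{\targetspace}{\mu}(A)\) prevents any indeterminate expression from appearing when \(\lambda = 0\).
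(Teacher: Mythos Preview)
Your argument is correct and is the standard one: the seminorm is absolutely homogeneous, so each term in the defining sums scales by \(\abs{\lambda}\), and a nonnegative constant factors out of the supremum; the case \(\lambda = 0\) is trivial. The paper itself does not supply a proof of this proposition---it is stated alongside the other basic properties of the total variation (finiteness, monotonicity, countable additivity, triangle inequality) as a well-known fact, with a reference to \cite{Ambrosio_Fusco_Pallara_2000}. Your write-up is exactly what one would expect such an omitted proof to look like.
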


It follows from the previous properties that if the function \(f : X \to \reals\) is bounded and \(\Sigma\)--measurable, then
\begin{align*}
  \biggl\lvert \int_{A} f \dif \mu \biggr\rvert_{\targetspace}
  &\le \int_{A} \abs{f} \dif \variation{\targetspace}{\mu}.
\end{align*}

If \(X\) is a locally compact separable metric space, we endow it with its Borel \(\sigma\)-algebra \(\borel{X}\). This topological setting on \(X\) is common when thinking of
\(C_0(X)^\star\) as the space of finite measures on \(X\), every finite measure being
Radon measures because \(X\) is second-countable and metric. In particular,
whatever the choice of the norm \(\norm{\targetspace}{\cdot}\) on \(\targetspace\),
the variation measure \(\variation{\targetspace}{\mu}\) is a Radon measure.

\begin{proposition}
\label{propositionCharacterizationVariation}
Let \(X\) be a locally compact separable metric space.
If \(\mu : \borel{A} \to V\) is a vector measure and if \(A \subset X\) is open,
then 
  \begin{equation*} 
      \variation{\targetspace}{\mu}(A) 
    = \sup
      {\biggl\{\int_{A} \dualproduct{\psi}{\dif\mu} \st  \psi\in C_c (A, \targetspace') \text{ and } \abs{\psi}_{\targetspace'} \le 1 \biggr\}}. 
  \end{equation*}
\end{proposition}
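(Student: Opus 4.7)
The plan is to prove both inequalities separately. For the direction bounding the supremum by the total variation, I would invoke a Radon--Nikodym-type decomposition \(\mu = u \variation{\targetspace}{\mu}\) for some \(\variation{\targetspace}{\mu}\)-measurable \(u : X \to \targetspace\) with \(\abs{u}_{\targetspace} = 1\) \(\variation{\targetspace}{\mu}\)-almost everywhere, which is available from the finiteness and countable additivity of \(\variation{\targetspace}{\mu}\) recorded above. Then for any admissible \(\psi \in C_c(A, \targetspace')\) with \(\abs{\psi}_{\targetspace'} \le 1\),
\[
  \int_A \dualproduct{\psi}{\dif \mu}
  = \int_A \dualproduct{\psi(x)}{u(x)} \dif \variation{\targetspace}{\mu}(x)
  \le \int_A \abs{\psi(x)}_{\targetspace'} \abs{u(x)}_{\targetspace} \dif \variation{\targetspace}{\mu}(x)
  \le \variation{\targetspace}{\mu}(A)
\]
by the defining duality between \(\targetspace\) and \(\targetspace'\).

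For the reverse inequality, I would fix \(\varepsilon > 0\) and approximate from within \cref{definition_mass}: choose pairwise disjoint Borel sets \(E_1, \ldots, E_m \subseteq A\) with \(\sum_{n=1}^m \abs{\mu(E_n)}_{\targetspace} > \variation{\targetspace}{\mu}(A) - \varepsilon\). Since \(\dim \targetspace < \infty\), the Hahn--Banach theorem supplies for each \(n\) some \(\eta_n \in \targetspace'\) with \(\abs{\eta_n}_{\targetspace'} \le 1\) and \(\dualproduct{\eta_n}{\mu(E_n)} = \abs{\mu(E_n)}_{\targetspace}\). The natural candidate \(\sum_n \eta_n \chi_{E_n}\) fails admissibility on both counts, so I must regularize it.

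The regularization exploits that \(\variation{\targetspace}{\mu}\) is a finite Radon measure on the locally compact separable metric space \(X\). Inner regularity supplies compacts \(K_n \subseteq E_n\) with \(\variation{\targetspace}{\mu}(E_n \setminus K_n) < \varepsilon/m\); openness of \(A\) and local compactness of \(X\) let me enclose the disjoint \(K_n\) in pairwise disjoint relatively compact open sets \(U_n \subset A\) with \(\variation{\targetspace}{\mu}(U_n \setminus K_n) < \varepsilon/m\); and Urysohn's lemma provides \(\varphi_n \in C_c(U_n, [0,1])\) equal to \(1\) on \(K_n\). Setting \(\psi \defeq \sum_n \varphi_n \eta_n\), the disjointness of supports yields \(\abs{\psi}_{\targetspace'} \le 1\) pointwise and \(\psi \in C_c(A, \targetspace')\). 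The decomposition
\[
  \int_A \dualproduct{\psi}{\dif \mu}
  = \sum_n \abs{\mu(E_n)}_{\targetspace} - \sum_n \dualproduct{\eta_n}{\mu(E_n \setminus K_n)} + \sum_n \int_{U_n \setminus K_n} \varphi_n \dualproduct{\eta_n}{\dif\mu}
\]
controls each error term by the corresponding \(\variation{\targetspace}{\mu}(E_n \setminus K_n)\) or \(\variation{\targetspace}{\mu}(U_n \setminus K_n)\), giving \(\int_A \dualproduct{\psi}{\dif\mu} \ge \variation{\targetspace}{\mu}(A) - 3\varepsilon\); letting \(\varepsilon \to 0\) concludes.

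The main obstacle is precisely this regularization step: passing from the blocky simple function \(\sum_n \eta_n \chi_{E_n}\) to a compactly supported continuous \(\targetspace'\)-valued test function while preserving the value of the integral up to an error controlled by \(\varepsilon\). This requires both inner and outer regularity of the Radon measure \(\variation{\targetspace}{\mu}\) together with Urysohn's lemma to separate the \(K_n\) inside pairwise disjoint open subsets of \(A\); the subsequent algebraic bookkeeping is routine.
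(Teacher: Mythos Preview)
Your argument is correct. Both inequalities are established, and the regularization step is carried out cleanly; the error bookkeeping yields \(\int_A \dualproduct{\psi}{\dif\mu} \ge \variation{\targetspace}{\mu}(A) - 3\varepsilon\) as you claim.

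The paper's proof follows the same two-inequality skeleton but implements each half differently. For the bound \(\sup\{\cdots\} \le \variation{\targetspace}{\mu}(A)\), the paper avoids the polar decomposition \(\mu = u\,\variation{\targetspace}{\mu}\) entirely: instead it approximates a given \(\psi \in C_c(A,\targetspace')\) uniformly by a simple function \(g = \sum_n \eta_n \chi_{E_n}\) with \(\norm{\targetspace'}{\eta_n} \le 1\), and then reads off \(\int_A \dualproduct{g}{\dif\mu} \le \variation{\targetspace}{\mu}(A)\) directly from \cref{definition_mass}. Your route via polar decomposition is shorter once that decomposition is granted, but it imports a Radon--Nikodym result that the paper has not stated; the paper's argument stays closer to first principles. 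For the reverse inequality, the paper starts exactly as you do (disjoint \(E_n\), Hahn--Banach functionals \(\eta_n\)) but then invokes Lusin's theorem to pass from \(\sum_n \eta_n \chi_{E_n}\) to a continuous function, followed by a Urysohn cut-off to enforce compact support. Your explicit construction via inner regularity, disjoint open neighbourhoods, and Urysohn bump functions is more hands-on and arguably more transparent, since it makes the disjointness of supports (hence \(\norm{\targetspace'}{\psi} \le 1\)) automatic rather than something to be recovered after Lusin.
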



Here 
\begin{equation*}
  \targetspace' \defeq \bigl\{ \eta \in \targetspace^* \st \norm{\targetspace'}{\eta}< +\infty \bigr\},
\end{equation*}
where
\(\targetspace^*\) is the algebraic dual space of the vector space \(V\) and 
\begin{equation}
\label{eq_dual_norm_primal}
 \norm{\targetspace'}{\eta} \defeq \sup\, \bigl\{ \dualproduct{\eta}{v} \st v \in \targetspace \text{ and } \norm{\targetspace}{v} \le 1\bigr\}.
\end{equation}
Equivalently, \(\eta \in V'\) if and only if for every \(v \in V\), \(\norm{\targetspace}{v} = 0\) implies \(\dualproduct{\eta}{v} = 0\).
By the Hahn--Banach theorem for seminorms (see for example \cite{Brezis}*{Theorem 1.1}), one has 
\begin{equation}
\label{eq_primal_norm_dual}
  \norm{\targetspace}{v} = \sup \,\bigl\{ \dualproduct{\eta}{v} \st \eta \in \targetspace' \text{ and } \abs{v} \le 1\bigr\}.
\end{equation}
Since \(V\) and \(V'\) are finite-dimensional, the suprema in \eqref{eq_dual_norm_primal} and \eqref{eq_primal_norm_dual} are achieved in view of the Weierstrass extreme value theorem.

The proof of \cref{propositionCharacterizationVariation} is a transposition of the proof when \(V\) is Euclidean and \(X\) is separable \cite{Ambrosio_Fusco_Pallara_2000}*{Proposition 1.47}.

\begin{proof}[Proof of \cref{propositionCharacterizationVariation}]
Let \(E _1, \dotsc, E_m \in \borel{X}\) be disjoint subsets of \(A\)
and, in view of \eqref{eq_primal_norm_dual}, let \(\eta_1, \dotsc, \eta_m \in \targetspace'\) be linear forms such that for every \(i \in \{1, \dotsc, m\}\)
\(\norm{\targetspace'}{\eta_i} = 1\) and 
\(\dualproduct{\eta_i}{\mu (E_i)} = \norm{\targetspace}{\mu (E_i)}\).
Since \(\mu\) is a vector measure, \(\variation{\targetspace}{\mu}\) is a finite measure (\cref{proposition_total_variation_countable_additive,proposition_variation_finite}) and thus by Lusin's theorem there exists a function \(\psi \in C (A, \targetspace')\) such that \(\variation{\targetspace}{\mu} (\{x \in A \st \psi (x) \ne \sum_{n = 1}^m \eta_n \chi_{E_n} (x) \}) < \varepsilon\). Without loss of generality we can assume that \(\norm{\targetspace'}{\psi} \le 1\).
Since the set \(A\) is \(\sigma\)--compact and in view Urysohn's lemma for locally compact spaces \cite{Rudin_1987}*{lemma 2.12}, there exists a sequence of functions \((\theta_j)_{j \in \integers}\) in \(C_c (A, [0, 1])\) that converges everywhere to \(1\) in the set \(A\). Therefore, by multiplying \(\psi\) by \(\theta_j\) for \(j \in \integers\) large enough, we can assume that \(\psi \in C_c (A, V')\) and \(\norm{\targetspace'}{\psi} \le 1\) everywhere in \(A\). 
We then have 
  \begin{equation*} 
    \Bigg| \sum_{n = 1}^m \norm{\targetspace}{\mu(E_n)} - \int_A \dualproduct{\psi}{\dif\mu} \Bigg| \leq \varepsilon \variation{\targetspace}{\mu} (A) .
  \end{equation*}
This implies thus that 
\begin{equation*}
 \sum_{n = 1}^m \norm{\targetspace}{\mu(E_n)}\in \closure{\biggl\{\int_{A} \dualproduct{\psi}{\dif\mu} \st  \psi\in C_c (A, \targetspace') \text{ and } \abs{\psi}_{\targetspace'} \le 1 \biggr\}}. 
\end{equation*}

Conversely, if \(\psi \in C_c (A, \targetspace')\) and \(\norm{\targetspace'}{\psi} \le 1\) on \(A\),
then there exist an integer \(m \in \integers\), forms \(\eta_1, \dotsc, \eta_m \in \targetspace'\) and pairwise disjoint sets \(E_1, \dotsc, E_m \in \borel{A}\) such that for each \(n \in \{1, \dotsc, m\}\), \(\norm{\targetspace'}{\eta_n} \le 1\) and \(E_n \subseteq A\) and such that if \(g = \sum_{n = 1}^m \eta_n \chi_{E_n}\), one has \(\norm{\targetspace'}{g - \psi} \le \varepsilon\) everywhere on \(A\). 
We then have 
\begin{equation*}
  \Bigl\vert \int_{A} \dualproduct{g}{\dif \mu} - \int_{A} \dualproduct{\psi}{\dif \mu} \Bigr\vert \le
  \varepsilon \variation{\targetspace}{\mu} (A)
\end{equation*}
and by \cref{definition_mass}
\begin{equation*}
  \int_A \abs{g}{\dif \mu}
  = \sum_{n = 1}^m \dualproduct{\eta_n}{\mu (E_n)}
  \le \sum_{n = 1}^m \norm{\targetspace}{\mu (E_n)}
  \le \variation{\targetspace}{\mu}(A).\qedhere
\end{equation*}
\end{proof}
From now on, even if not stated explicitly, \(X\) will be assumed to be a locally compact
separable metric space endowed with its Borel \(\sigma\)-algebra
\(\Sigma=\mathscr{B}(X)\).

\subsection{Range}
The range of a measure has a definition that has some analogies with the total variation.

\begin{definition}
\label{definition_range}
The \emph{range} of the vector measure \(\mu:\Sigma\to\targetspace\) on a set \(A\in\Sigma\)  is the set
  \begin{equation*} 
      \range[A]{\mu} 
    \defeq 
      \cchull{\big\{ \mu(E): E\in\Sigma \text{ and } E \subset A\big\}} 
    \subseteq 
      \targetspace .
  \end{equation*}
\end{definition}

Here, \(\cchull{H}\) denotes the closed convex hull of a set \(H \subset \targetspace\). The range defined by \cref{definition_range} only depends on the structure of \(V\) as a topological vector space.

If \(\mu = \sum_{i = 1}^k v_i \delta_{x_i}\), with \(x_1, \dotsc, x_k \in X\) distinct points, then \cref{definition_range} gives for every \(A \in \Sigma\)
\begin{equation*}
  \range[A]{\mu} = \Biggl \{ \sum_{i = 1}^d t_i v_i
  \st 0 \le t_i \le \chi_A (x_i)\Biggr\}.
\end{equation*}
If \(X \subset \reals^N\) is a Borel-measurable set and if \(\mu \defeq w \mathcal{L}^N\), where \(w : X \to \targetspace\) is a Borel-measurable function, then 
\begin{multline*}
 \range[A]{\mu}  = \Biggl\{\int_{X} f w \dif \mathcal{L}^N \st f : X \to \reals \text{ is Borel measurable }\\[-1.5em]
 \text{ and } 0 \le f \le \chi_A \text{ in \(X\)}\Biggr\}.
\end{multline*}

It follows immediately from the \cref{definition_range} that the set \(\range[A]{\mu}\) is a closed convex subset of the space \(V\) and that \(0 \in \range[A]{\mu}\). Moreover, \(\range[A]{\mu} = \{0\}\) if and only if \(\mu = 0\).

Since for every \(E \in \Sigma\) such that \(E \subset A\), \(\mu (A \setminus E) = \mu (A) - \mu (E)\), we have 
\(\range[A]{\mu} = \mu (A) - \range[A]{\mu}\), that is, the set \(\range[A]{\mu}\) is symmetric with respect to the vector \(\mu (A)/2 \in V\).

In a finite-dimensional space \(V\), the range is always bounded (see \cref{proposition_variation_finite}).

\begin{proposition}
\label{proposition_range_bounded}
Let \(\Sigma\) be a \(\sigma\)--algebra on \(X\).
If \(\mu:\Sigma\to\targetspace\) is a vector measure, then for every \(A \in \Sigma\), the set \(\range[A]{\mu}\) is bounded.
\end{proposition}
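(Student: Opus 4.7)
The plan is to reduce the claim to the finiteness of the total variation, which is already established as \cref{proposition_variation_finite}. The subtlety is that \(\abs{\cdot}_{\targetspace}\) is only a seminorm, so a bound of the form \(\abs{\mu(E)}_{\targetspace} \le \variation{\targetspace}{\mu}(A)\) does not by itself yield boundedness of \(\{\mu(E) : E \subseteq A\}\) in \(V\) as a topological vector space. I would therefore work with an auxiliary genuine norm and only at the end invoke the equivalence of norms (or the fact that \(V\) is finite-dimensional) to transfer the conclusion back to the topology of \(V\).

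Concretely, first I would fix any norm \(\norm{0}{\cdot}\) on \(V\); such a norm exists because \(\dim V < \infty\). Applying \cref{proposition_variation_finite} to the vector measure \(\mu\) with this norm in place of \(\abs{\cdot}_{\targetspace}\) yields \(\variation{0}{\mu}(A) < +\infty\). By the very definition of total variation, for every \(\Sigma\)--measurable \(E \subseteq A\) one has
\begin{equation*}
  \norm{0}{\mu(E)} \le \variation{0}{\mu}(A) ,
\end{equation*}
so the set \(S \defeq \{\mu(E) : E \in \Sigma, E \subseteq A\}\) is contained in the closed ball \(B \defeq \{v \in V : \norm{0}{v} \le \variation{0}{\mu}(A)\}\).

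Next I would observe that \(B\) is convex and closed in \(V\) (with respect to its finite-dimensional topology, which agrees with the one induced by \(\norm{0}{\cdot}\)). Hence \(\chull{S} \subseteq B\) and, since \(B\) is closed, \(\cchull{S} \subseteq B\). Thus \(\range[A]{\mu} \subseteq B\), which is bounded; this gives the desired conclusion.

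The only potential obstacle is a purely notational one: making sure that the finiteness assertion of \cref{proposition_variation_finite} is applied with a true norm rather than the seminorm \(\abs{\cdot}_{\targetspace}\) that appears in the statement of the proposition. Once that is clarified, the argument is immediate and does not require any countable additivity or topological structure on \(X\) beyond what is already built into the notion of vector measure.
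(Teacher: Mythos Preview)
Your argument is correct. It takes a different route from the paper: whereas the paper decomposes \(\mu\) along a basis of \(V\), applies the Hahn--Jordan decomposition to each scalar component \(\mu_i = \dualproduct{\eta_i}{\mu}\), and then encloses the range in a coordinate box \(\prod_i [\mu_i^-(A),\mu_i^+(A)]\), you simply invoke \cref{proposition_variation_finite} with an auxiliary genuine norm \(\norm{0}{\cdot}\) and observe that the closed convex hull of \(\{\mu(E):E\subseteq A\}\) stays in the closed ball of radius \(\variation{0}{\mu}(A)\). Your approach is shorter and makes cleaner use of already-established machinery; the paper's approach is more self-contained (it essentially reproves finiteness of the variation via Hahn--Jordan rather than citing it) and yields an explicit enclosing box, which is occasionally useful but not needed here. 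Your remark about the seminorm subtlety is well placed: since \cref{proposition_variation_finite} is stated for an arbitrary seminorm and a norm is in particular a seminorm, the application is legitimate.
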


This property fails in general if the domain is not a \(\sigma\)-algebra~\cite{Diestel_Uhl_1977}*{Example~8, Proposition~11}.

\begin{proof}[Proof of \cref{proposition_range_bounded}]
Since \(V\) is finite-dimensional, there exist vectors \(v_1, \dotsc, v_d \in V\) and linear forms \(\eta_1, \dotsc, \eta_d \in \targetspace'\) such that \(d = \dim V\) and for every \(i, j \in \{1, \dotsc, d\}\), \(\dualproduct{\eta_i}{v_j} = \delta_{ij}\), where \(\delta_{ij}\) denotes Kronecker's delta. 
In particular, we have for every \(v \in V\),
\(
 v = \sum_{i= 1}^d v_i \dualproduct{\eta_i}{v}
\),
and thus, for every set \(E \in \Sigma\), 
\(
 \mu (E) = \sum_{i = 1}^d v_i \mu_i (E)
\),
where \(\mu_i (E) = \dualproduct{\eta_i}{\mu (E)}\).
For every \(i \in \{1, \dotsc, d\}\), the set function \(\mu_i\) is a finite signed measure and thus by the Hahn--Jordan decomposition theorem it can be decomposed as 
\(\mu_i =\mu^+_i + \mu^-_i\), where \(\mu_i^+ \ge 0\), \(\mu_i^- \le 0\) are finite Borel  measures~\cite{Conway}*{Appendix~C}.
Hence 
\begin{equation*}
 \range[A]{\mu} 
 \subseteq 
  \biggl\{ \sum_{i = 1}^n \tau_i v_i 
 \st \text{for every \(i \in \{1, \dotsc, d\}\), }
 \mu_i^- (A) \le \tau_i \le \mu_i^+ (A)
 \biggr\},
\end{equation*}
and thus the set \(\range[A]{\mu}\) is bounded.
\end{proof}

The range is monotone with respect to the set, as was the total variation (\cref{proposition_variation_monotone}).

\begin{proposition}[Monotonicity]
Let \(\Sigma\) be a \(\sigma\)--algebra on \(X\), let \(\mu:\Sigma\to\targetspace\) be a vector measure and let \(A, B\in\Sigma\).
If \(A \subseteq B\), then
  \begin{equation*} 
      \range[A]{\mu} 
    \subseteq 
      \range[B]{\mu} 
      .
  \end{equation*}
\end{proposition}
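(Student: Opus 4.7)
The plan is to derive the statement directly from \cref{definition_range}, with no additional machinery required.

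First, I would unpack the definition. By definition,
\begin{equation*}
  \range[A]{\mu} = \cchull{\mathcal{A}}, \qquad \range[B]{\mu} = \cchull{\mathcal{B}},
\end{equation*}
where \(\mathcal{A} \defeq \{\mu(E) : E \in \Sigma,\, E \subseteq A\}\) and \(\mathcal{B} \defeq \{\mu(E) : E \in \Sigma,\, E \subseteq B\}\). Since \(A \subseteq B\), any \(E \in \Sigma\) satisfying \(E \subseteq A\) also satisfies \(E \subseteq B\), so \(\mathcal{A} \subseteq \mathcal{B}\) as subsets of \(\targetspace\).

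Next, I would invoke the elementary monotonicity of the closed convex hull operation: whenever \(S_1 \subseteq S_2\) in a topological vector space, the set \(\cchull{S_2}\) is a closed convex set containing \(S_1\), and \(\cchull{S_1}\) is by definition the smallest such set, so \(\cchull{S_1} \subseteq \cchull{S_2}\). Applying this with \(S_1 = \mathcal{A}\) and \(S_2 = \mathcal{B}\) yields the desired inclusion \(\range[A]{\mu} \subseteq \range[B]{\mu}\).

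There is no real obstacle here — the statement is a direct consequence of the definition plus the monotonicity of the \(\cchull{\cdot}\) operator. The only point worth emphasizing in the write-up is the parallel with \cref{proposition_variation_monotone}: both monotonicity statements express the fact that enlarging the admissible family of test sets \(E\) can only increase (in the appropriate sense: summation of norms for the variation, convex hull of values for the range) the associated quantity.
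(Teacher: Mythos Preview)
Your proof is correct and matches the paper's approach: the paper simply states that the result ``follows directly from \cref{definition_range},'' and you have spelled out precisely that deduction. The additional remark paralleling \cref{proposition_variation_monotone} is accurate but not needed for the argument itself.
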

\begin{proof}
This follows directly from \cref{definition_range}.
\end{proof}

Finally the range has a countable additivity property as the total variation (\cref{proposition_total_variation_countable_additive}).

\begin{proposition}[Countable additivity]
\label{proposition_range_countable_additive}
Let \(\Sigma\) be a \(\sigma\)--algebra on \(X\), let \(\mu:\Sigma\to\targetspace\) be a vector measure and let \((A_n)_{n \in \integers}\) be a sequence of disjoint sets in \(\Sigma\).
Then 
\begin{equation*} 
  \range[\textstyle \bigcup_{n\in\integers}A_n]{\mu}
  = \closure{\textstyle \sum_{n\in\integers}\range[A_n]{\mu}} .
\end{equation*}
\end{proposition}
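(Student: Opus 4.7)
The plan is to reduce the statement to its finite analogue --- namely that for pairwise disjoint \(A_1, \dotsc, A_k \in \Sigma\),
\[
\range[A_1 \cup \dotsb \cup A_k]{\mu} = \range[A_1]{\mu} + \dotsb + \range[A_k]{\mu}
\]
as a Minkowski sum --- and then to lift to the countable case using the countable additivity of \(\mu\) together with the finiteness of the total variation. For the finite case, the inclusion ``\(\subseteq\)'' is immediate: every \(\Sigma\)--measurable \(G \subseteq A_1 \cup \dotsb \cup A_k\) splits as \(G = \bigcup_i (G \cap A_i)\), so \(\mu(G) = \sum_i \mu(G \cap A_i) \in \range[A_1]{\mu} + \dotsb + \range[A_k]{\mu}\); the right-hand side is a Minkowski sum of finitely many compact convex subsets of \(V\) (by \cref{proposition_range_bounded}) and is therefore itself closed and convex, hence contains \(\range[A_1 \cup \dotsb \cup A_k]{\mu}\). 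For ``\(\supseteq\)'' I will first establish the identity \(\chull{(S+T)} = \chull{S} + \chull{T}\) by expanding \(\bigl(\sum_i \alpha_i s_i\bigr) + \bigl(\sum_j \beta_j t_j\bigr) = \sum_{i,j} \alpha_i \beta_j (s_i + t_j)\); applied to \(S = \{\mu(E) : E \in \Sigma,\ E \subseteq A_1\}\) and \(T = \{\mu(F) : F \in \Sigma,\ F \subseteq A_2\}\), and combined with \(S + T \subseteq \{\mu(G) : G \in \Sigma,\ G \subseteq A_1 \cup A_2\}\), this yields (after passing to closures, which is harmless because the Minkowski sum of two compact convex sets is compact) \(\range[A_1]{\mu} + \range[A_2]{\mu} \subseteq \range[A_1 \cup A_2]{\mu}\). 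An induction on \(k\) completes the finite case.

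With the finite version in hand, set \(B \defeq \bigcup_n A_n\) and \(B_k \defeq \bigcup_{n \le k} A_n\), and interpret \(\sum_{n \in \integers} \range[A_n]{\mu}\) as the set of convergent series \(\sum_n v_n\) with \(v_n \in \range[A_n]{\mu}\) --- a set that is convex via termwise convex combination. For ``\(\supseteq\)'' in the stated equality, any such \(v = \sum_n v_n\) is the limit of its partial sums \(\sum_{n \le k} v_n \in \range[B_k]{\mu} \subseteq \range[B]{\mu}\) (finite case plus monotonicity), and closedness of \(\range[B]{\mu}\) gives \(v \in \range[B]{\mu}\); passing once more to closure yields \(\closure{\sum_n \range[A_n]{\mu}} \subseteq \range[B]{\mu}\). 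For ``\(\subseteq\)'', given \(G \in \Sigma\) with \(G \subseteq B\), countable additivity of \(\mu\) combined with the bounds \(\norm{\targetspace}{\mu(G \cap A_n)} \le \variation{\targetspace}{\mu}(A_n)\) and \(\sum_n \variation{\targetspace}{\mu}(A_n) = \variation{\targetspace}{\mu}(B) < \infty\) (\cref{proposition_variation_finite,proposition_total_variation_countable_additive}) delivers absolute convergence of \(\mu(G) = \sum_n \mu(G \cap A_n)\) with every term in \(\range[A_n]{\mu}\); hence \(\mu(G) \in \sum_n \range[A_n]{\mu}\), and taking closed convex hulls on both sides (the right-hand side being already closed and convex) gives \(\range[B]{\mu} \subseteq \closure{\sum_n \range[A_n]{\mu}}\).

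The main obstacle is purely a matter of bookkeeping: one must confirm that \(\sum_n \range[A_n]{\mu}\) is well-defined as a convex set of honestly convergent series and that the finite-dimensional identity \(\chull{(S+T)} = \chull{S} + \chull{T}\) extends to closed convex hulls in the present bounded setting --- which it does because the Minkowski sum of two compact convex sets is again compact. No deeper analytical subtlety is required, since the two countable-additivity statements already available for \(\mu\) and for \(\variation{\targetspace}{\mu}\) furnish all the convergence needed.
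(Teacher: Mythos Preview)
Your proof is correct and follows essentially the same strategy as the paper's --- one inclusion via the countable additivity of \(\mu\), the other via the closed-convex-hull structure of the range --- though you organize it more explicitly by isolating the finite case first. The only point to reconcile is definitional: the paper takes \(\sum_{n\in\integers}\range[A_n]{\mu}\) to mean the set of \emph{finite} sums (see the sentence immediately following the proposition), not the set of convergent series you use; since \(0\in\range[A_n]{\mu}\) for every \(n\), every finite sum is a convergent series and every convergent series is a limit of its partial sums, so the two sets have the same closure and your argument establishes the stated identity verbatim.
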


Here \(\sum_{n\in\integers}\range[A_n]{\mu}\) denotes the set of \emph{finite sums} of elements in the sets \(\range[A_n]{\mu}\).

Since the range is compact (\cref{proposition_range_bounded}), \cref{proposition_range_countable_additive} implies that if \(A_1, \dotsc, A_k \in \Sigma\) are disjoint, we have 
\begin{equation*}
 \range[\textstyle \bigcup_{n = 1}^kA_n]{\mu}  
 = \textstyle \sum_{n = 1}^k \range[A_n]{\mu}.
\end{equation*}
If we drop in \cref{proposition_range_countable_additive} the assumption that the sets \((A_n)_{n \in \integers}\) are disjoint we obtain
\begin{equation*}
 \range[\textstyle \bigcup_{n\in\integers}A_n]{\mu}
  \subseteq \closure{\textstyle \sum_{n\in\integers}\range[A_n]{\mu}} .
\end{equation*}

\begin{proof}[Proof of \cref{proposition_range_countable_additive}]
We first assume that \(v \in \sum_{n\in\integers}\range[A_n]{\mu}\). 
Then there exists \(k \in \integers\) and \(v_0, \dotsc, v_k\) such that for each \(n \in \{0, \dotsc, k\}\), \(v_n \in \range[A_n]{\mu}\) and \(v = \sum_{i = 0}^k v_n\). 
By definition of the range, for every \(n \in \{0, \dotsc, k\}\), \(v_n = \lim_{\ell \to \infty} v_n^\ell\), where 
\begin{equation*}
 v_n^\ell \in \chull {\big\{ \mu(E): E\in\Sigma \text{ and } E \subset A_n\big\}} 
    \subseteq 
      \targetspace .
\end{equation*}
Hence we have by additivity of the measure,
\begin{equation*}
 v^\ell \defeq \sum_{n = 0}^k v_n^\ell
 \in \chull {\biggl\{ \mu(E): E\in\Sigma \text{ and } E \subset \bigcup_{i = 0}^n A_n\biggr\}}
\end{equation*}
and, by closedness of the range, \(v \in \range[\bigcup_{n\in\integers}A_n]{\mu}\). We have thus proved that \( \sum_{n\in\integers}\range[A_n]{\mu} \subseteq \range[\bigcup_{n\in\integers}A_n]{\mu}\). Since the range is a closed set by definition, we deduce that \( \closure{\sum_{n\in\integers}\range[A_n]{\mu}} \subseteq \range[\bigcup_{n\in\integers}A_n]{\mu}\)

Conversely, we observe that if \(E \subset \bigcup_{n \in \integers} A_n\), then by countable additivity of the measure \(\mu\), we have
\(
  \mu (E)
  = 
    \sum_{n\in\integers}\mu(E\cap A_n) \in \sum_{n\in\integers}\range[A_n]{\mu}\).
It then follows that \( \range[\textstyle \bigcup_{n\in\integers}A_n]{\mu}
  \subseteq \closure{\textstyle \sum_{n\in\integers}\range[A_n]{\mu}}\).
\end{proof}

The range operation is also compatible with the linear structure of vector measures. First it is sublinear with respect to the sum of measures, as was the total variation \cref{proposition_total_variation_triangle}.

\begin{proposition}[Triangle inequality]
\label{propositionSublinearity}
Let \(\Sigma\) be a \(\sigma\)--algebra on \(X\) and let \(\mu:\Sigma\to\targetspace\) be a vector measure.
For every \(A\in\Sigma\),
  \begin{equation*} 
      \range[A]{(\mu + \nu)}
    \subseteq 
      \range[A]{\mu}+\range[A]{\nu} .
  \end{equation*}
\end{proposition}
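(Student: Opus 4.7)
The plan is to verify the inclusion at the level of the generating sets and then propagate it through the closed convex hull operation. First I would fix an arbitrary set \(E \in \Sigma\) with \(E \subseteq A\) and simply note that
\begin{equation*}
  (\mu + \nu)(E) = \mu(E) + \nu(E) \in \range[A]{\mu} + \range[A]{\nu},
\end{equation*}
since each summand belongs to the corresponding range by \cref{definition_range}. This shows the pointwise containment
\begin{equation*}
  \bigl\{(\mu+\nu)(E) \st E \in \Sigma,\, E \subseteq A\bigr\} \subseteq \range[A]{\mu} + \range[A]{\nu}.
\end{equation*}

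The next step is to show that the right-hand side is already closed and convex, so that it contains the closed convex hull of the left-hand side, which is by definition \(\range[A]{(\mu+\nu)}\). Convexity is immediate since the Minkowski sum of two convex sets is convex. For closedness, I would invoke \cref{proposition_range_bounded}: both \(\range[A]{\mu}\) and \(\range[A]{\nu}\) are closed (by definition) and bounded subsets of the finite-dimensional space \(V\), hence compact. The Minkowski sum of two compact sets in a topological vector space is compact (as the continuous image under addition of the compact product set), and in particular closed.

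Combining the two observations concludes the proof: the closed convex hull of a set contained in a closed convex set is itself contained in that set. I do not anticipate any real obstacle here; the only subtle point worth being explicit about is that closedness of \(\range[A]{\mu} + \range[A]{\nu}\) relies on compactness rather than mere closedness (sums of closed sets can fail to be closed in general), and this compactness is exactly what \cref{proposition_range_bounded} provides in the finite-dimensional setting.
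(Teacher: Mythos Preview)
Your proposal is correct and follows essentially the same approach as the paper: show that \((\mu+\nu)(E)\in\range[A]{\mu}+\range[A]{\nu}\) for every \(E\in\Sigma\) with \(E\subseteq A\), then use \cref{proposition_range_bounded} to conclude that the right-hand side is a compact (hence closed) convex set containing the closed convex hull defining \(\range[A]{(\mu+\nu)}\). Your write-up is in fact slightly more explicit than the paper's about why closedness requires compactness rather than mere closedness.
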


The range is not linear. Indeed, in general one has 
\( \range[A]{(\mu - \mu)} = \{0\} \ne \range[A]{\mu}  + \range[A] {(- \mu)}\).

\begin{proof}[Proof of \cref{propositionSublinearity}]
One has for every \(E \in \Sigma\) such that \(E \subset A\),
\(
 \mu (E) \in \range[A]{\mu}+\range[A]{\nu}\).
Since the range is convex and bounded (\cref{proposition_range_bounded}), the set  \(\range[A]{\mu}+\range[A]{\nu}\) is closed and convex and the conclusion follows.
\end{proof}

The range operation is also stable under linear mappings in the target.

\begin{proposition}[Linearity of the range]
\label{proposition_range_linear}Let \(\Sigma\) be a \(\sigma\)--algebra on \(X\), let \(V, W\) be finite-dimensional spaces and let \(\mu:\Sigma\to\targetspace\) be a vector measure.
For every \(T
\in \operatorname{Lin} (V, W)\),
  \begin{equation*} 
    \range{ (T \circ \mu) } = T \bigl(\range{\mu}\bigr).
  \end{equation*}
\end{proposition}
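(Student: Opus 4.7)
The plan is to unwind the definition of the range, use that the linear map \(T\) commutes with the formation of convex hulls, and then pass \(T\) through the closure. The content reduces to three elementary facts: (i) \(T\) is a homomorphism of real vector spaces, (ii) \(T\) is continuous (automatic between finite-dimensional normed spaces), and (iii) the set whose closed convex hull defines \(\range{\mu}\) is bounded.

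First I would write, using \cref{definition_range},
\begin{equation*}
    \range{(T \circ \mu)}
  = \cchull{\bigl\{T(\mu(E)) : E \in \Sigma\bigr\}}
  = \cchull{T(S)},
\end{equation*}
where \(S \defeq \{\mu(E) : E \in \Sigma\}\). Linearity of \(T\) gives \(\chull{T(S)} = T(\chull{S})\), since both sides consist exactly of vectors \(\sum_i \lambda_i T(\mu(E_i))\) with \(\lambda_i \ge 0\) and \(\sum_i \lambda_i = 1\).

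The remaining and only nontrivial step is to interchange \(T\) with the closure operation, i.e.\ to show
\begin{equation*}
  \closure{T(\chull{S})} = T\bigl(\closure{\chull{S}}\bigr) = T(\range{\mu}).
\end{equation*}
The inclusion \(\subseteq\) uses continuity of \(T\) together with compactness of the codomain closure; but the reverse inclusion is the one that genuinely needs boundedness. Here I would invoke \cref{proposition_range_bounded}, which guarantees that \(\range{\mu}\) is bounded and hence, in the finite-dimensional space \(V\), compact. Then \(T(\range{\mu})\) is the continuous image of a compact set and therefore closed in \(W\); since it already contains \(T(\chull{S})\), it contains the closure of this set.

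The main (and only) obstacle is the commutation of the closure with \(T\), which is what forces the finite-dimensional hypothesis: in an infinite-dimensional setting, the closed convex hull of a bounded set need not be compact, and a continuous linear map need not send closed convex sets to closed sets. Once compactness of \(\range{\mu}\) is in hand, the argument is essentially a one-line chain of equalities.
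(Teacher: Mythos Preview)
Your argument is correct and matches the paper's proof: both reduce the statement to the identity \(T(\chull{S}) = \chull{T(S)}\) by linearity, and then use that \(\range{\mu}\) is bounded (hence compact in the finite-dimensional \(V\)) to interchange \(T\) with the closure. One small slip: the inclusion \(T(\closure{\chull S}) \subseteq \closure{T(\chull S)}\) follows from continuity alone, while it is the opposite inclusion \(\closure{T(\chull S)} \subseteq T(\closure{\chull S})\) that requires compactness---your final two sentences actually prove this latter inclusion, so the labeling of the directions is reversed but the substance is right.
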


In particular, if \(\lambda\in\reals\) and  \(A\in\Sigma\), we have
  \begin{equation}
    \range[A]{(\lambda\,\mu)} = \lambda\range[A]{\mu} ,
  \end{equation}
  which is the counterpart of \cref{proposition_total_variation_homogeneous}.

\begin{proof}[Proof of \cref{proposition_range_linear}]
If \(E \in \Sigma\) and if \(E \subseteq A\), then we have 
\( (T \circ \mu) (E) = T (\mu (E))\).
It follows then that 
\begin{multline*}
   \cchull {(\{T \circ \mu (E) \st E \in \Sigma \text{ and } E \subseteq A \})}\\
= \closure{T (\chull {(\{\mu (E) \st E \in \Sigma \text{ and } E \subseteq A \}}})
\end{multline*}
We observe now that if the set \(H \subset \targetspace\) is convex and bounded, then by continuity and compactness in the finite-dimensional space \(\targetspace\), \(T (\closure{H}) =\closure{T (H)}\).
\end{proof}

The topological structure on the space \(X\) can be used to characterize the range by duality on test functions.

\begin{proposition}
\label{proposition_range_duality}
If \(X\) is a locally compact separable metric space and if \(\mu : \borel{X} \to V\) is a vector Borel measure and if the set \(A \subset X\) is open,
then 
  \begin{equation*} 
      \range[A]{\mu} 
    = 
      \closure{\biggl\{\int_{A} \varphi \dif\mu  \st  \varphi\in C_c (A, [0,1]) \biggr\}}. 
  \end{equation*}
\end{proposition}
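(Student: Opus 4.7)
The plan is to establish the two inclusions separately, relying on the facts that \(\range[A]{\mu}\) is a closed convex set containing \(0 = \mu(\emptyset)\) (and is compact by \cref{proposition_range_bounded}) and that \(\variation{\targetspace}{\mu}\) is a finite Radon measure on \(X\).

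For the inclusion \(\range[A]{\mu} \subseteq \closure{S}\), where \(S \defeq \bigl\{\int_A \varphi \dif\mu \st \varphi \in C_c(A, [0,1])\bigr\}\), I first note that \(S\) is convex because \(\varphi \mapsto \int_A \varphi \dif \mu\) is linear and \(C_c(A, [0,1])\) is a convex subset of \(C_c(A, \reals)\). It is then enough to show that \(\mu(E) \in \closure{S}\) for every \(E \in \borel{A}\): by inner and outer regularity of \(\variation{\targetspace}{\mu}\), I choose a compact \(K \subseteq E\) and an open \(U\) with \(E \subseteq U \subseteq A\) such that \(\variation{\targetspace}{\mu}(U \setminus K) < \varepsilon\), and Urysohn's lemma for locally compact Hausdorff spaces \cite{Rudin_1987}*{Lemma~2.12} provides \(\varphi \in C_c(A, [0,1])\) with \(\chi_K \le \varphi \le \chi_U\), so that
\begin{equation*}
\Bigabs{\mu(E) - \int_A \varphi \dif \mu}_{\targetspace} \le \variation{\targetspace}{\mu}(U \setminus K) < \varepsilon .
\end{equation*}
Passing to the closed convex hull gives \(\range[A]{\mu} \subseteq \closure{S}\).

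For the reverse inclusion, I fix \(\varphi \in C_c(A, [0,1])\) and approximate it uniformly by simple functions \(\varphi_n \defeq \sum_{k=1}^n \tfrac{k-1}{n} \chi_{E_{n,k}}\), where the sets \(E_{n,k} \defeq \{x \in A \st \varphi(x) \in [\tfrac{k-1}{n}, \tfrac{k}{n})\}\) (with the final interval closed) partition \(A\) and satisfy \(\abs{\varphi - \varphi_n}_\infty \le 1/n\). The telescoping identity \(\tfrac{k-1}{n} = \sum_{j=1}^{k-1} \tfrac{1}{n}\) together with additivity of \(\mu\) yields
\begin{equation*}
\int_A \varphi_n \dif \mu = \sum_{j = 1}^{n-1} \tfrac{1}{n}\, \mu(F_{n, j}) + \tfrac{1}{n} \cdot 0 ,
\end{equation*}
where \(F_{n, j} \defeq \{x \in A \st \varphi(x) \ge j/n\} \in \borel{A}\). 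This is a genuine convex combination of \(n\) elements of \(\range[A]{\mu}\), hence lies in \(\range[A]{\mu}\) by convexity. Finally, uniform convergence combined with \cref{proposition_variation_finite} yields \(\int_A \varphi_n \dif \mu \to \int_A \varphi \dif \mu\), and closedness of \(\range[A]{\mu}\) completes the argument.

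The most delicate point is the layer-cake bookkeeping in the second inclusion: the weights \(\tfrac{1}{n}\) assigned to the sets \(F_{n,j}\) for \(j = 1, \dots, n-1\) only sum to \(\tfrac{n-1}{n}\), and one must allocate the residual \(\tfrac{1}{n}\) to the admissible vector \(0 = \mu(\emptyset) \in \range[A]{\mu}\) in order to express \(\int_A \varphi_n \dif \mu\) as a true convex combination of points of \(\range[A]{\mu}\).
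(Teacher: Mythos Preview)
Your proof is correct and follows essentially the same two-inclusion strategy as the paper. The only cosmetic differences are that for the first inclusion the paper invokes Lusin's theorem rather than spelling out inner/outer regularity plus Urysohn, and for the second inclusion the paper simply asserts the existence of a simple function \(f\) with \(\int_A f\dif\mu\) in the convex hull, whereas you make the layer-cake bookkeeping explicit; neither difference is substantive.
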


\Cref{proposition_range_duality} is the counterpart of \cref{propositionCharacterizationVariation} for the range. 

\begin{proof}[Proof of \cref{proposition_range_duality}]
Let \(E \in \borel{X}\). Since \(\mu\) is a vector measure, \(\mass{\targetspace}{\mu}\) is a finite measure (\cref{proposition_total_variation_countable_additive,proposition_variation_finite}) and thus by Lusin's theorem there exists a function \(\varphi \in C (X)\) such that \(\variation{\targetspace}{\mu} (\{x \in X \st \varphi (x) \ne \chi_E (x) \}) \le \varepsilon)\).
Since the set \(A\) is open and \(\sigma\)--compact, we can assume without loss of generality that \(\varphi \in C_c (A, [0, 1])\). We then have 
  \begin{equation*} 
    \Bigg| \mu(E) - \int_X\varphi\dif\mu \Bigg| \leq \varepsilon \variation{\targetspace}{\mu} (A) .
  \end{equation*}
This implies thus that 
\begin{equation*}
 \mu (E) \in  \closure{\big\{\action{\mu}{\varphi} : \varphi\in C_c (X,[0,1]) \big\}}. 
\end{equation*}

Conversely, if \(\varphi \in C_c (A)\), there exists a Borel-measurable function \(f : A \to [0, 1]\) that takes only finitely may values and such that \(\abs{f - \varphi} \le \varepsilon\) on \(A\). We then have 
\begin{equation*}
  \Bigl\vert \int_{A} f \dif \mu - \int_{A} \varphi \dif \mu\Bigr\vert \le
  \varepsilon \variation{\targetspace}{\mu}
\end{equation*}
and 
\begin{equation*}
  \int_A f \dif \mu \in \chull{ \{\mu (E) \st E \in \borel{X} \text{ and } E \subset X\}},
\end{equation*}
so that by \cref{definition_range}
\begin{equation*}
 \int_{A} \varphi \dif \mu \in \range[A]{\mu}.\qedhere
\end{equation*}
\end{proof}

\subsection{Links between the range and the total variation }
In the one-dimensional case, there is a straightforward relationship between the range and the mass.

\begin{proposition}
\label{proposition_range_1d}
Assume that \(\dim (V) = 1\). If \(\mu : \Sigma \to \targetspace\) is a vector measure and \(A \in \Sigma\), then 
\begin{equation*}
  \variation{\targetspace}{\mu}(A) = \diam (\range[A]{\mu}).
\end{equation*}
\end{proposition}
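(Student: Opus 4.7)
The plan is to reduce to the case of a real-valued signed measure and then invoke the Hahn--Jordan decomposition. First, I would dispose of the degenerate case where the seminorm \(\norm{\targetspace}{\cdot}\) vanishes identically on \(V\): both \(\variation{\targetspace}{\mu}(A)\) and \(\diam(\range[A]{\mu})\) are then zero by definition, so the identity holds trivially.

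Otherwise, since \(\dim \targetspace = 1\) the seminorm is in fact a norm, and I would fix a basis vector \(e\in\targetspace\) with \(\norm{\targetspace}{e} = 1\) and write \(\mu(E) = \tilde\mu(E)\, e\), where \(\tilde\mu:\Sigma\to\reals\) is a finite signed measure. Homogeneity of the total variation (\cref{proposition_total_variation_homogeneous}) and linearity of the range (\cref{proposition_range_linear}) give \(\variation{\targetspace}{\mu}(A) = \variation{\reals}{\tilde\mu}(A)\) and \(\range[A]{\mu} = \range[A]{\tilde\mu}\cdot e\), so it suffices to establish the scalar identity
\begin{equation*}
  \variation{\reals}{\tilde\mu}(A) = \diam\bigl(\range[A]{\tilde\mu}\bigr).
\end{equation*}

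For this, I would apply the Hahn--Jordan decomposition to write \(\tilde\mu = \tilde\mu^+ - \tilde\mu^-\) with mutually singular non-negative parts concentrated on disjoint measurable sets \(P, N\in\Sigma\) with \(P \cup N = X\). The supremum defining the total variation (\cref{definition_mass}) is then realized by the partition \(\{P\cap A, N\cap A\}\), yielding \(\variation{\reals}{\tilde\mu}(A) = \tilde\mu^+(A) + \tilde\mu^-(A)\). For the range, any measurable \(E \subseteq A\) satisfies
\begin{equation*}
  -\tilde\mu^-(A) \;\le\; \tilde\mu(E\cap P) - \tilde\mu(E\cap N) = \tilde\mu(E) \;\le\; \tilde\mu^+(A),
\end{equation*}
with both extremes attained by \(E = N\cap A\) and \(E = P\cap A\) respectively. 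Since the range is the closed convex hull of these values, \(\range[A]{\tilde\mu} = [-\tilde\mu^-(A), \tilde\mu^+(A)]\), whose diameter equals \(\tilde\mu^+(A) + \tilde\mu^-(A)\), matching the total variation.

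I do not anticipate any substantive obstacle; the proof is a direct computation once the classical Hahn--Jordan decomposition is invoked, and the only point demanding attention is the preliminary reduction from a seminorm on a one-dimensional space to the standard absolute value on \(\reals\) (handling the degenerate seminorm separately).
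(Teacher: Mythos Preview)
Your proof is correct and follows essentially the same approach as the paper: reduce to \(\targetspace=\reals\), apply the Hahn--Jordan decomposition, and read off both the total variation and the range as \(\tilde\mu^+(A)+\tilde\mu^-(A)\) and \([-\tilde\mu^-(A),\tilde\mu^+(A)]\) respectively. You supply more detail than the paper (the degenerate seminorm case, the explicit reduction via a basis vector, and the verification that the extreme values are attained), but the strategy is identical.
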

\begin{proof}
Without loss of generality, we can assume that \(\targetspace = \reals\).
Let \(\mu = \mu^+ + \mu^-\) be the Hahn--Jordan decomposition (see  e.g. \cite{Conway}*{Appendix~C}) into measures \(\mu^+ \ge 0\) and \(\mu^- \le 0\).
One has then 
\begin{equation*}
  \variation{\reals}{\mu}(A) = \mu^+ (A) - \mu^- (A) 
\end{equation*}
and 
\begin{equation*}
 \range[A]{\mu} = [\mu^- (A), \mu^+ (A)],
\end{equation*}
and the conclusion follows.
\end{proof}

In higher dimensions, the mass and the range are still related by a quantitative bound.

\begin{proposition}[Comparison between total variation and range]
\label{proposition_boundedness}
Let  \((V, \norm{\targetspace}{\cdot})\) be a seminormed space.
There exists a constant \(C>0\) such that for every \(\Sigma\)--algebra on the space 
\(X\) and every vector measure \(\mu : \Sigma \to \targetspace\) and every \(A \in \Sigma\),
we have 
\begin{equation*}
  \range[A]{\mu} \subset \closure{B_\targetspace (0, \variation{\targetspace}{\mu} (A))}
\end{equation*}
and 
\begin{equation*}
    \variation{\targetspace}{\mu} (A)
  \le 
  C \sup \,\bigl\{ \norm{\targetspace}{v} \st v \in \range[A]{\mu}\bigr\}.    
\end{equation*}
\end{proposition}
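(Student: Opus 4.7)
The first inclusion is immediate. For any \(E \in \Sigma\) with \(E \subseteq A\), the one-element partition \(\{E\}\) is admissible in \cref{definition_mass}, so \(\norm{\targetspace}{\mu(E)} \le \variation{\targetspace}{\mu}(A)\); hence each generator \(\mu(E)\) lies in the closed ball \(\closure{B_\targetspace(0, \variation{\targetspace}{\mu}(A))}\), and since that ball is closed and convex it contains the closed convex hull \(\range[A]{\mu}\).

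For the reverse-type inequality, I would first reduce to the case in which \(\norm{\targetspace}{\cdot}\) is a genuine norm, by replacing \(\mu\) with its composition with the canonical projection \(\pi : V \to V/V_0\) onto the quotient by the kernel \(V_0 \defeq \{v \in V \st \norm{\targetspace}{v} = 0\}\). The seminorm descends to a norm on \(V/V_0\); the total variations agree because the seminorm factors through \(\pi\); and \cref{proposition_range_linear} identifies \(\range[A]{\pi \circ \mu}\) with the image \(\pi(\range[A]{\mu})\) on which this norm takes the same values as \(\norm{\targetspace}{\cdot}\). So one may assume \(\norm{\targetspace}{\cdot}\) is a norm, which in particular gives \(\targetspace' = \targetspace^*\).

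Fix any basis \(e_1, \dotsc, e_d\) of \(V\) with dual basis \(\eta_1, \dotsc, \eta_d \in \targetspace'\), and decompose \(\mu = \sum_{i=1}^d \mu_i e_i\) with scalar coordinates \(\mu_i \defeq \dualproduct{\eta_i}{\mu}\). The triangle inequality (\cref{proposition_total_variation_triangle}) and homogeneity (\cref{proposition_total_variation_homogeneous}) give
\begin{equation*}
 \variation{\targetspace}{\mu}(A) \le \sum_{i=1}^d \norm{\targetspace}{e_i}\, \variation{\reals}{\mu_i}(A).
\end{equation*}
By \cref{proposition_range_1d} each scalar variation equals \(\diam(\range[A]{\mu_i})\), and by \cref{proposition_range_linear} we have \(\range[A]{\mu_i} = \dualproduct{\eta_i}{\range[A]{\mu}}\). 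Combining the elementary bound \(\diam(K) \le 2 \sup_{x \in K} \abs{x}\) with the dual-norm inequality \(\abs{\dualproduct{\eta_i}{v}} \le \norm{\targetspace'}{\eta_i}\, \norm{\targetspace}{v}\) then yields the conclusion with the explicit constant \(C \defeq 2 \sum_{i=1}^d \norm{\targetspace}{e_i}\, \norm{\targetspace'}{\eta_i}\). The only mildly delicate point is the reduction to the norm case, required so that each dual basis vector \(\eta_i\) actually lies in \(V'\) and the dual-norm estimate is available; everything else is a straightforward assembly of already-established propositions.
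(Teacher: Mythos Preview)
Your argument is correct and follows essentially the same route as the paper: pass to scalar coordinates via a dual basis, invoke \cref{proposition_range_1d} to identify each scalar total variation with the diameter of the projected range, and bound back by the dual norm. The only cosmetic difference is that you reduce explicitly to the norm case by quotienting out \(V_0\), whereas the paper works directly in the seminormed setting by taking \(\eta_1,\dotsc,\eta_d\) to be a basis of \(V'\) (which automatically annihilates \(V_0\)) and using the comparison \(\norm{\targetspace}{v}\le C_1\sum_i\abs{\dualproduct{\eta_i}{v}}\) from \eqref{eq_primal_norm_dual}.
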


Diestel and Uhl have a similar statement for the \emph{semivariation} of a vector measure in a Banach space \cite{Diestel_Uhl_1977}*{Proposition~I.1.11}.

\begin{proof}[Proof of \cref{proposition_boundedness}]
\resetconstant
For the first part of the proposition, by \cref{definition_mass}, for every set \(E\in\Sigma\) such that \(E \subseteq A\) we have
  \begin{equation*} \norm{\targetspace}{\mu(E)} \leq \norm{\targetspace}{\mu(E)} + \norm{\targetspace}{\mu(A \setminus E)}
    \leq \variation{\targetspace}{\mu} (A),\end{equation*}
    and the assertion follows from \cref{definition_range}.

For the second part, let \(\eta_1, \dotsc, \eta_d\) be a basis of \(\targetspace'\). By \eqref{eq_primal_norm_dual}, there exists a constant \(\Cl{cst_Au9aDah2il} > 0\) such that for every \(v \in \targetspace\),
\begin{equation*}
    \norm{\targetspace}{v}
  \le
    \Cr{cst_Au9aDah2il}
    \sum_{i = 1}^d
      \abs{\dualproduct{\eta_i}{v}}
      .
\end{equation*}
It follows then that if the sets \(E_1, \dotsc, E_\ell \in \Sigma\) are pairwise disjoint and if \(E_1, \dotsc, E_\ell \subset A\), we have
\begin{equation*}
    \sum_{n = 1}^\ell
      \norm{\targetspace}{\mu (E_i)}
  \le 
    \Cr{cst_Au9aDah2il}
    \sum_{i = 1}^d
    \sum_{n = 1}^\ell
      \abs{\dualproduct{\eta_i}{\mu (E_i)}}
  \le     
    \Cr{cst_Au9aDah2il}
    \sum_{i = 1}^d
    \variation{\targetspace}{\dualproduct{\eta_i}{\mu (E_i)}} (A) ,
\end{equation*}
and using \cref{proposition_range_1d} we infer
	\begin{equation*}
    \sum_{n = 1}^\ell
      \norm{\targetspace}{\mu (E_i)}
     \le  \C 
     \sup \,\bigl\{ \norm{\targetspace}{v} \st v \in \range[A]{\mu}\bigr\} .\qedhere
     \end{equation*}
\end{proof}

\Cref{proposition_boundedness} implies the continuity of the range with respect to the seminorm induced by the total variation.

\begin{proposition}
\label{proposition_strong_continuity_range}
If \(\mu, \nu : \Sigma \to \targetspace\) and \(A \in \Sigma\),
then  
\begin{equation*}
    \hausdist (\range[A]{\nu},  \range[A]{\mu})
  \le  
    \variation{\targetspace}{\nu - \mu}(A).
\end{equation*}
\end{proposition}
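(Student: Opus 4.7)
The plan is to establish the bound between a point of one range and a suitably chosen point of the other, starting from the simplest representatives and extending by convexity and closure. By the symmetry of the Hausdorff distance and of the right-hand side (since \(\variation{\targetspace}{\nu - \mu} = \variation{\targetspace}{\mu - \nu}\) by \cref{proposition_total_variation_homogeneous}), it suffices to show that for every \(v \in \range[A]{\nu}\), there exists \(w \in \range[A]{\mu}\) with \(\norm{\targetspace}{v - w} \le \variation{\targetspace}{\nu - \mu}(A)\).

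First, I would treat the case \(v = \nu(E)\) for some \(E \in \Sigma\) with \(E \subseteq A\). Take \(w \defeq \mu(E) \in \range[A]{\mu}\). Then
\begin{equation*}
  \norm{\targetspace}{v - w}
  = \norm{\targetspace}{(\nu-\mu)(E)}
  \le \variation{\targetspace}{\nu - \mu}(E)
  \le \variation{\targetspace}{\nu - \mu}(A),
\end{equation*}
the last inequality being \cref{proposition_variation_monotone} applied to \(\nu - \mu\). Next, I would extend this to finite convex combinations: if \(v = \sum_{i=1}^m t_i \nu(E_i)\) with \(t_i \geq 0\), \(\sum_i t_i = 1\) and \(E_i \subseteq A\), define \(w \defeq \sum_{i=1}^m t_i \mu(E_i) \in \chull\{\mu(E) : E \in \Sigma,\ E \subseteq A\} \subseteq \range[A]{\mu}\). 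Then by the triangle inequality in \(V\) and the same step as above applied to each \(E_i\),
\begin{equation*}
  \norm{\targetspace}{v - w}
  \le \sum_{i=1}^m t_i \, \variation{\targetspace}{\nu-\mu}(E_i)
  \le \variation{\targetspace}{\nu-\mu}(A),
\end{equation*}
since each term is bounded by \(\variation{\targetspace}{\nu-\mu}(A)\) and \(\sum_i t_i = 1\).

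Finally, for a general \(v \in \range[A]{\nu} = \cchull\{\nu(E) : E \in \Sigma,\ E \subseteq A\}\), I would take a sequence \((v_k)_{k \in \integers}\) of finite convex combinations with \(v_k \to v\), and select corresponding points \(w_k \in \range[A]{\mu}\) with \(\norm{\targetspace}{v_k - w_k} \le \variation{\targetspace}{\nu-\mu}(A)\). Since \(\range[A]{\mu}\) is closed by definition and bounded by \cref{proposition_range_bounded}, hence compact in the finite-dimensional space \(V\), a subsequence of \((w_k)_{k \in \integers}\) converges to some \(w \in \range[A]{\mu}\), and passing to the limit in the inequality yields the desired bound.

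No real obstacle should arise here; the only point of care is that in extending from convex combinations one must not lose the uniform bound, which is automatic because each \(E_i \subseteq A\) activates only \(\variation{\targetspace}{\nu - \mu}(A)\) on the right-hand side, and the bound then survives the limit by compactness of the range.
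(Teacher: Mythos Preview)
Your proof is correct but takes a different route from the paper's. The paper argues in two lines at the level of sets: by the triangle inequality for the range (\cref{propositionSublinearity}) one has \(\range[A]{\nu} \subseteq \range[A]{(\nu-\mu)} + \range[A]{\mu}\), and then the first part of \cref{proposition_boundedness} gives \(\range[A]{(\nu-\mu)} \subseteq \closure{B_\targetspace(0,\variation{\targetspace}{\nu-\mu}(A))}\); symmetry finishes it. You instead work pointwise from the definition of the range as a closed convex hull: match \(\nu(E)\) with \(\mu(E)\), propagate the bound through convex combinations, and pass to the closure by compactness. Your argument is more elementary and self-contained---it never invokes \cref{propositionSublinearity} or \cref{proposition_boundedness}, effectively re-deriving their content in this specific instance---while the paper's proof is shorter because it cashes in on those propositions already proved. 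Both are clean; the paper's version better illustrates how the range behaves algebraically like a set-valued seminorm.
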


Here \( \hausdist (\range[A]{\nu},  \range[A]{\mu})\) denotes the Hausdorff distance between non-empty compact subsets of \(V\): if \(K_1, K_2 \subset V\) are non-empty compact sets,
\begin{equation*}
  \hausdist (K_1, K_2)
  \defeq
  \inf\, 
  \bigl\{ 
      \varepsilon > 0 
    \st 
	  K_1 
	\subseteq 
	  K_2 + \closure{B (0, \varepsilon)} 
      \text{ and }
	  K_2 
	\subseteq 
	  K_1 + \closure{B (0, \varepsilon)}
  \bigr\}.
\end{equation*}

\Cref{proposition_strong_continuity_range} implies in particular that if the sequence \((\mu_n)_{n \in \integers}\) converges in total variation to \(\mu\) on \(A\), then the sequence of sets \((\range[A]{\mu_n} )_{n\in\integers}\) converges to \(\range{\mu}\) in Hausdorff distance.

\begin{proof}[Proof of \cref{proposition_strong_continuity_range}]
By the triangle inequality for the range (\cref{propositionSublinearity}), we have 
\begin{align*}
 \range{\nu} &\subseteq \range{(\nu - \mu)} + \range{\mu}.
\end{align*}
By \cref{proposition_boundedness}, we have 
\begin{align*}
  \range{(\nu - \mu)} &\subseteq \closure{B_\targetspace (0, \variation{\targetspace}{\nu - \mu})}.
\end{align*}
We conclude then that 
\begin{equation*}
 \range{\nu} \subseteq \closure{B_\targetspace (0, \variation{\targetspace}{\nu - \mu})} + \range{\mu}.
\end{equation*}
By symmetry, we also have 
\begin{equation*}
 \range{\mu} \subseteq \closure{B_\targetspace (0, \variation{\targetspace}{\nu - \mu})} + \range{\nu},
\end{equation*}
and the conclusion follows from the definition of Hausdorff distance.
\end{proof}

A second property is the fact that the range determines univocally the image: if two measures share the same range on a set \(A\), then their total variation on that set coincides~\cite{Rodriguez_Piazza}.

\begin{proposition}[Rodr\'\i guez-Piazza]
\label{proposition_range_univ}
Let \(\mu, \nu : \Sigma \to \targetspace\) be vector measures. If \(\range[X]{\mu} = \range[X]{\nu}\), then \(\mass{\targetspace}{\mu} = \mass{\targetspace}{\nu}\).
\end{proposition}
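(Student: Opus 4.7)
The plan is to reduce the assertion to a question about support functions of convex bodies, and then to invoke the injectivity of the cosine transform on zonotopes. First, for every $\eta \in \targetspace^*$, the support function of $R := \range[X]{\mu}$ evaluated at $\eta$ equals $\sup_{E\in\Sigma}\dualproduct{\eta}{\mu(E)}$, since taking a closed convex hull does not alter the supremum of a linear functional. By the Hahn--Jordan decomposition of the signed measure $\eta \circ \mu$ (in the spirit of \cref{proposition_range_1d}), this supremum equals the positive part $(\eta\circ\mu)^+(X)$; substituting $-\eta$ and adding yields
\[
  h_R(\eta) + h_R(-\eta) = \variation{\reals}{\eta\circ\mu}(X).
\]
Consequently, the hypothesis $\range[X]{\mu} = \range[X]{\nu}$ forces $\variation{\reals}{\eta\circ\mu}(X) = \variation{\reals}{\eta\circ\nu}(X)$ for every $\eta \in \targetspace^*$.

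Next, I would treat the case of a finitely-supported atomic measure $\mu = \sum_{i=1}^{k}v_i\delta_{x_i}$. Here one verifies directly that $\range[X]{\mu} = \sum_i [0,v_i]$ is a zonotope and $\variation{\targetspace}{\mu}(X) = \sum_i \norm{\targetspace}{v_i}$. Translating by $\mu(X)/2$, the support function of the centered zonotope becomes $h(\eta) = \tfrac{1}{2}\sum_i\bigabs{\dualproduct{\eta}{v_i}}$, which is, up to the factor $2$, the cosine transform of the even finite Borel measure on the unit sphere of $\targetspace$ obtained by placing mass $\tfrac{1}{2}\norm{\targetspace}{v_i}$ at each of the points $\pm v_i/\norm{\targetspace}{v_i}$. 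Since the cosine transform is injective on even finite Borel measures on the sphere (a classical result from zonoid theory, see the work of Choquet cited in the text), this measure is uniquely determined by the range, and the total variation $\sum_i \norm{\targetspace}{v_i}$ is then recovered as its total mass.

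The general case follows by approximation: taking conditional expectations $\mu_n := \mathbb{E}[\mu \mid \Sigma_n]$ along a refining sequence of finite sub-$\sigma$-algebras generating $\Sigma$ produces finitely-supported atomic measures with $\variation{\targetspace}{\mu_n}(X) \to \variation{\targetspace}{\mu}(X)$ and, by \cref{proposition_strong_continuity_range}, with ranges converging to $\range[X]{\mu}$ in Hausdorff distance (and similarly for $\nu$); the atomic identification then passes to the limit thanks to the continuity of the inverse cosine transform with respect to weak-$*$ convergence. The main obstacle in this program is the atomic step, namely the injectivity of the cosine transform on even measures, which is a nontrivial input from convex geometry; in the Euclidean case it is replaceable by direct integration over the sphere as in the proof of \cref{theorem_euclidean_representation}, bypassing the cosine-transform machinery entirely.
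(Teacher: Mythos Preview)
Your first paragraph is correct and is exactly the key observation in the paper's proof: equality of ranges forces
\[
  \variation{\reals}{\dualproduct{\eta}{\mu}}(X)=\variation{\reals}{\dualproduct{\eta}{\nu}}(X)\qquad\text{for every }\eta\in\targetspace^*.
\]
The paper stops here and applies \cref{polytopeApproximation}, which says that for every $\varepsilon>0$ one can write $\norm{\targetspace}{\cdot}$ as a signed combination $\sum_i\alpha_i\abs{\dualproduct{\eta_i}{\cdot}}$ up to relative error $\varepsilon$, uniformly in the measure; this immediately gives $\bigl|\,\mass{\targetspace}{\mu}-\mass{\targetspace}{\nu}\,\bigr|\le\varepsilon\bigl(\mass{\targetspace}{\mu}+\mass{\targetspace}{\nu}\bigr)$ and the result follows. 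So after your first paragraph the proof is already one line away from being finished.

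Your detour through cosine-transform injectivity and atomic approximation is a genuinely different route, but as written it has a gap in the approximation step. You invoke \cref{proposition_strong_continuity_range} to get Hausdorff convergence of the ranges, but that proposition requires $\variation{\targetspace}{\mu_n-\mu}(X)\to 0$. For conditional expectations on finite sub-$\sigma$-algebras this is a martingale-convergence statement that needs $\Sigma$ to be essentially countably generated with respect to $\variation{\targetspace}{\mu}+\variation{\targetspace}{\nu}$; the proposition is stated for an arbitrary $\sigma$-algebra, so this has to be argued (by first passing to a countably generated sub-$\sigma$-algebra on which the variations and ranges are unchanged). Moreover, you need the total mass of the generating even measure on the sphere to be weak-$*$ continuous in the zonoid; this is true because the sphere is compact, but it deserves a sentence. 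Finally, the cosine-transform injectivity you invoke is a substantially deeper input than the density statement underlying \cref{polytopeApproximation} (Choquet--Bolker--Rickert), which only needs that signed combinations of functions $v\mapsto\abs{\dualproduct{\eta}{v}}$ are uniformly dense among seminorms. In short: your first paragraph plus \cref{polytopeApproximation} is the clean proof; the rest is salvageable but is working much harder than necessary.
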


The next lemma highlights the key property used by Rodr\'\i guez-Piazza to prove 
\cref{proposition_range_univ}. It is an adaptation, in terms of total masses, of a representation principle for the norm
by finite combination of positive values of linear functionals~\cite{Rodriguez_Piazza}*{Lemma~2}.

\begin{lemma}\label{polytopeApproximation}
Let \(\norm{\targetspace}{\cdot}\) be a seminorm on \(\targetspace\). 
For each \(\varepsilon>0\), there exists
\(\eta_1,\dots,\eta_m\in \targetspace'\) and \(\alpha_1,\dots,\alpha_m\in\{-1, 1\}\) such that,  for every \(\Sigma\)--algebra on the space 
\(X\) and every vector measure \(\mu : \Sigma \to \targetspace\), we have
  \begin{equation*} \Bigg| \variation{\targetspace}{\mu} (A)
    - \sum^m_{i=1}\alpha_i \variation{\reals}{\dualproduct{\eta_i}{\mu}} (A) 
  \Bigg| \leq \varepsilon\variation{\targetspace}{\mu} (A).\end{equation*}
\end{lemma}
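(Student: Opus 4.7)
The plan is to deduce the lemma from a pointwise approximation of the seminorm by signed absolute values of linear forms, together with a partition refinement to transfer the estimate to total variations.

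For the first ingredient I would prove: for every \(\varepsilon > 0\), there exist \(\eta_1, \dotsc, \eta_m \in \targetspace'\) and signs \(\alpha_1, \dotsc, \alpha_m \in \{-1, +1\}\) with
\[\Bigl\lvert \norm{\targetspace}{v} - \sum_{i=1}^m \alpha_i\,\lvert\dualproduct{\eta_i}{v}\rvert \Bigr\rvert \leq \varepsilon\,\norm{\targetspace}{v}\quad\text{for every } v \in \targetspace.\]
After quotienting by its kernel we may assume \(\norm{\targetspace}{\cdot}\) is a norm. By positive homogeneity and the equivalence of norms in finite dimensions, it suffices to approximate \(\norm{\targetspace}{\cdot}\) uniformly on some reference Euclidean unit sphere \(S\) by a real-linear combination of the functions \(v\mapsto\lvert\dualproduct{\eta}{v}\rvert\); each real coefficient \(c\) is then absorbed into its form via \(c\,\lvert\dualproduct{\eta}{v}\rvert = \operatorname{sgn}(c)\,\lvert\dualproduct{\lvert c\rvert\eta}{v}\rvert\) to produce the required signs in \(\{-1,+1\}\). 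The density of these functions in the space of continuous even \(1\)-homogeneous functions on \(\targetspace\) follows, via Hahn--Banach, from the injectivity of the cosine transform: any even signed Radon measure \(\sigma\) on \(S\) satisfying \(\int_S\lvert\dualproduct{\eta}{v}\rvert\dif\sigma(v) = 0\) for every \(\eta\in\targetspace'\) must vanish.

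For the second ingredient, fix a vector measure \(\mu\) and a set \(A\in\Sigma\). For each \(i\in\{1,\dotsc,m\}\) let \(A = A_i^+ \sqcup A_i^-\) be a Hahn decomposition of the signed measure \(\dualproduct{\eta_i}{\mu}\) on \(A\), and let \(\mathcal{R}\) be the common refinement of these \(m\) decompositions. Given \(\delta>0\), choose a partition \(\mathcal{Q}\) of \(A\) with \(\sum_{E\in\mathcal{Q}}\norm{\targetspace}{\mu(E)} \geq \variation{\targetspace}{\mu}(A) - \delta\), and let \(\mathcal{P}^*\) be the common refinement of \(\mathcal{Q}\) and \(\mathcal{R}\). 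Since refining a partition can only increase the sum of the norms of its pieces, \(\sum_{E\in\mathcal{P}^*}\norm{\targetspace}{\mu(E)} \geq \variation{\targetspace}{\mu}(A)-\delta\); and since each scalar measure \(\dualproduct{\eta_i}{\mu}\) has constant sign on every atom of \(\mathcal{P}^*\),
\[\sum_{E\in\mathcal{P}^*}\lvert\dualproduct{\eta_i}{\mu(E)}\rvert = \variation{\reals}{\dualproduct{\eta_i}{\mu}}(A)\quad\text{for every } i.\]
Applying the pointwise estimate to each \(\mu(E)\), summing over \(E\in\mathcal{P}^*\), and exchanging sums yields
\[\Bigl\lvert \sum_{E\in\mathcal{P}^*}\norm{\targetspace}{\mu(E)} - \sum_{i=1}^m\alpha_i\,\variation{\reals}{\dualproduct{\eta_i}{\mu}}(A) \Bigr\rvert \leq \varepsilon\,\variation{\targetspace}{\mu}(A).\]
Combined with \(\variation{\targetspace}{\mu}(A)-\delta \leq \sum_{E\in\mathcal{P}^*}\norm{\targetspace}{\mu(E)} \leq \variation{\targetspace}{\mu}(A)\) and sending \(\delta\to 0\) (the middle quantity being independent of the partition), this produces the desired inequality.

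The partition step is essentially bookkeeping; the real obstacle is the first step, namely the uniform pointwise approximation of an arbitrary seminorm by a signed combination of absolute values of linear forms. This is precisely the content of Rodr\'\i guez-Piazza's representation principle, and rests on the classical but nontrivial injectivity of the cosine transform on even signed measures on the sphere.
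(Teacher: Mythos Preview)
Your proof is correct and follows essentially the same two-step strategy as the paper: a pointwise approximation of the seminorm by signed absolute values of linear forms (the paper simply cites Choquet, Bolker, and Rickert for this, whereas you sketch the Hahn--Banach/cosine-transform argument), followed by a common-refinement argument to transfer the estimate to total variations. Your use of the Hahn decomposition to obtain \emph{exact} equality \(\sum_{E\in\mathcal{P}^*}\lvert\dualproduct{\eta_i}{\mu(E)}\rvert = \variation{\reals}{\dualproduct{\eta_i}{\mu}}(A)\) is a mild streamlining over the paper, which instead takes \(\delta\)-approximate partitions for each scalar variation and pays an extra \((1+m)\delta\) error term before letting \(\delta\to 0\).
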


The proof follows the strategy of \cite{Rodriguez_Piazza}*{Theorem 3}.

\begin{proof}[Proof of \cref{polytopeApproximation}]
Fix \(\varepsilon>0\). 
There exist forms \(\eta_1,\dots,\eta_m\in \targetspace'\) and \(\alpha_1,\dots,\alpha_m\in\{-1, 1\}\)  such that for each \(v \in\targetspace\), we have
  \begin{equation*} \Bigg| \norm{\targetspace}{v}
    -  \sum^m_{i=1} \alpha_i |\dualproduct{\eta_i}{v}| 
  \Bigg| \leq \varepsilon\norm{\targetspace}{v} 
  \end{equation*}
(see \citelist{\cite{Choquet}*{p. 53}\cite{Bolker_1969}*{Theorem 2.8}\cite{Rickert_1967_a}*{Lemma 1}}.) Let \(\delta > 0\).
By definition of total mass there exists a finite partition \(\mathcal{P}_* \subset \Sigma\) of \(A\) such that 
\begin{equation*}
  \sum_{E \in \mathcal{P}_*} \norm{\targetspace}{\mu (E)}
  \ge \variation{\targetspace}{\mu}(A)  - \delta.
\end{equation*}
and for every \(i \in \{1, \dotsc, m\}\), there exists 
 a finite partition \(\mathcal{P}_i \subset \Sigma\) of \(A\) such that 
\begin{equation*}
  \sum_{E \in \mathcal{P}_i} \abs{\dualproduct{\eta_i}{\mu (E)}}
  \ge \variation{\reals}{\dualproduct{\eta_i}{\mu}}(A)  -\delta .
\end{equation*}
If we now take \(\mathcal{P}\) to be a common refinement the partitions of \(\mathcal{P}_*, \mathcal{P}_1, \dotsc, \mathcal{P}_m\), we have 
\begin{equation*}
  \sum_{E \in \mathcal{P}} \norm{\targetspace}{\mu (E)} \ge
  \variation{\targetspace}{\mu} (A) 
  -
  \delta
\end{equation*}
and 
\begin{equation*}
  \sum_{E \in \mathcal{P}} \abs{\dualproduct{\eta_i}{\mu (E)}}
  \ge 
  \variation{\reals}{\dualproduct{\eta_i}{\mu}} (A)- \delta .
\end{equation*}
It follows then that 
\begin{equation*}
  \begin{split}
 \Bigg| \variation{\targetspace}{\mu}(A)
    - \sum^m_{i=1}\alpha_i\variation{\reals}{\dualproduct{\eta_i}{\mu}}(A)
  \Bigg| 
  &\leq 
    \varepsilon \sum_{E \in \mathcal{P}} \norm{\targetspace}{\mu (E)}
    +  \biggl\lvert \sum_{E \in \mathcal{P}} \norm{\targetspace}{\mu (E)} - \variation{\targetspace}{\mu} (A)\biggr\vert
    \\
    & \hspace{2.5em} + \sum_{i = 1}^m |\alpha_i|\biggl\lvert  \sum_{E \in \mathcal{P}} \abs{\dualproduct{\eta_i}{\mu (E)}}
      - \variation{\reals}{\dualproduct{\eta_i}{\mu}} (A)\biggr\rvert\\
& \leq \varepsilon\variation {\targetspace}{\mu}(A)
+ \bigg( 1 + \sum^m_{i=1}|\alpha_i|\bigg)\delta.
\end{split}
\end{equation*}
Since \(\delta > 0\) is arbitrary, the conclusion follows.
\end{proof}

Rodr\'\i guez-Piazza's \cref{proposition_range_univ} can be deduced immediately from \cref{polytopeApproximation}.

\begin{proof}[Proof of \cref{proposition_range_univ}]
Let \(\varepsilon > 0\) and let then \(\eta_1, \dotsc, \eta_m \in \targetspace'\) and \(\alpha_1,\dots,\alpha_m\in\{-1, 1\}\) be given by \cref{polytopeApproximation}.
Since \(\range[A]\mu = \range[A] \nu\), we have for every \(i \in \{1, \dotsc, m\}\), in view of \cref{proposition_range_1d},
\begin{equation*}\variation{\reals}{\dualproduct{\eta_i}{\mu}}(A) = \diam (\range[A]{\dualproduct{\eta_i}{\mu}}) = \diam (\range[A]{\dualproduct{\eta_i}{\nu}})
= \variation{\reals}{\dualproduct{\eta_i}{\nu}}(A),
\end{equation*}
and therefore by \cref{polytopeApproximation},
\begin{equation*}
 \bigabs{\variation{\targetspace}{\nu}(A) - \variation{\targetspace}{\mu}(A)}
 \le 
 \varepsilon 
 \,
 \bigl(\variation{\targetspace}{\nu}(A) + \variation{\targetspace}{\mu}(A)\bigr);
\end{equation*}
since \(\varepsilon > 0\) is arbitrary, the conclusion follows.
\end{proof}

\subsection{Wide convergence}

The notion of wide convergence of measures is classically defined by testing the measure against compactly supported functions.

\begin{definition}
\label{definition_wide_convergence}
Let \(X\) be a locally compact separable metric space. 
A sequence of vector measures \((\mu_n)_{n \in \integers}\) from \(X\) to \(\targetspace\) converges widely to some vector measure \(\mu\) whenever for every \(\varphi \in C_c (X)\),
\begin{equation*}
    \lim_{n \to \infty}
      \int_{X} \varphi \dif \mu_n 
  = 
    \int_{X} \varphi \dif \mu.
\end{equation*}
\end{definition}

The wide convergence is compatible with the linear structure of \(V\). If \(T \in \operatorname{Lin} (V, W)\) and if the sequence \((\mu_n)_{n \in \integers}\) converges widely to \(\mu\), then \((T \circ \mu_n)_{n \in \integers}\) converges widely to \(T \circ \mu\).

A classical result is the lower semicontinuity of the mass under weak convergence.

\begin{proposition}
\label{proposition_variation_wlsc}
Let \(X\) be a locally compact separable metric space and let \((\targetspace, \norm{\targetspace}{\cdot})\) be a seminormed finite-dimensional space. 
If \((\mu_n)_{n\in\integers}\) is a sequence of
vector measures from \(X\) to \(V\) widely converging to some vector measure \(\mu : \borel{X} \to V\), then 
  \begin{equation*} 
      \variation{\targetspace}{\mu}(X) 
    \leq 
      \liminf_{n\to\infty}\variation{\targetspace}{\mu_n}(X) .
  \end{equation*}
\end{proposition}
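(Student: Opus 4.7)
The plan is to derive the lower semicontinuity from the duality characterization of the total variation given in \cref{propositionCharacterizationVariation}. Since \(X\) is itself open in \(X\), that proposition yields
\begin{equation*}
  \variation{\targetspace}{\mu}(X)
  = \sup\biggl\{\int_X \dualproduct{\psi}{\dif\mu} \st \psi \in C_c(X, \targetspace'),\ \norm{\targetspace'}{\psi} \le 1\biggr\},
\end{equation*}
so it suffices to show that for every admissible test \(\psi\), the scalar quantity \(\int_X \dualproduct{\psi}{\dif\mu}\) is bounded above by \(\liminf_{n\to\infty} \variation{\targetspace}{\mu_n}(X)\).

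First, I would upgrade wide convergence from scalar test functions to vector-valued ones. Fix a basis \(e_1, \dotsc, e_d\) of \(\targetspace\) with dual basis \(e_1^*, \dotsc, e_d^*\) and decompose \(\mu_n = \sum_i \mu_n^i e_i\), \(\mu = \sum_i \mu^i e_i\) into scalar signed measures. Wide convergence of \(\mu_n\) to \(\mu\) (applied to each scalar test \(\varphi\), reading off components in the finite-dimensional target) gives widely \(\mu_n^i \to \mu^i\) for each \(i\). Writing any \(\psi \in C_c(X, \targetspace')\) in the dual basis as \(\psi = \sum_i \psi_i e_i^*\) with \(\psi_i \in C_c(X)\), one then obtains
\begin{equation*}
  \lim_{n\to\infty}\int_X \dualproduct{\psi}{\dif\mu_n}
  = \lim_{n\to\infty}\sum_{i=1}^d \int_X \psi_i \dif\mu_n^i
  = \sum_{i=1}^d \int_X \psi_i \dif\mu^i
  = \int_X \dualproduct{\psi}{\dif\mu}.
\end{equation*}

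Next, whenever \(\norm{\targetspace'}{\psi} \le 1\), the dual characterization of \cref{propositionCharacterizationVariation} applied to each \(\mu_n\) gives \(\int_X \dualproduct{\psi}{\dif\mu_n} \le \variation{\targetspace}{\mu_n}(X)\). Passing to the limit with the identity above,
\begin{equation*}
  \int_X \dualproduct{\psi}{\dif\mu}
  = \lim_{n\to\infty}\int_X \dualproduct{\psi}{\dif\mu_n}
  \le \liminf_{n\to\infty} \variation{\targetspace}{\mu_n}(X).
\end{equation*}
Taking the supremum over all admissible \(\psi\) and invoking \cref{propositionCharacterizationVariation} on the left yields the desired inequality.

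I do not expect a serious obstacle: the only mild point is justifying that the scalar wide convergence extends to vector test functions, but this is immediate in finite dimension by decomposition in a basis. The supremum in \cref{propositionCharacterizationVariation} is taken over \emph{compactly supported} continuous vector fields, which is exactly what matches \cref{definition_wide_convergence}, so no approximation argument at infinity is needed.
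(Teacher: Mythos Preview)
Your proof is correct and follows essentially the same route as the paper: apply \cref{propositionCharacterizationVariation} to each \(\mu_n\), pass to the limit using wide convergence, and take the supremum over admissible \(\psi\). The only difference is that you spell out explicitly, via a basis decomposition, why scalar-test wide convergence yields \(\int_X \dualproduct{\psi}{\dif\mu_n} \to \int_X \dualproduct{\psi}{\dif\mu}\) for vector-valued \(\psi\), a step the paper simply attributes to \cref{definition_wide_convergence}.
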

\begin{proof}
Let \(\psi \in C_c (X,\targetspace')\) and assume that \(\norm{\targetspace'}{\psi} \le 1\) everywhere in \(X\).
By \cref{propositionCharacterizationVariation} for every \(n \in \integers\),
\begin{equation*}
 \int_{X} \dualproduct{\psi}{\dif \mu_n} 
 \le 
 \variation{\targetspace}{\mu_n} ,
\end{equation*}
and thus, in view of \cref{definition_wide_convergence}
\begin{equation*}
 \int_{X} \dualproduct{\psi}{\dif \mu_n}
 = \lim_{n \to \infty} \int_{X} \dualproduct{\psi}{\dif \mu_n} 
 \le \liminf_{n \to \infty} \variation{\targetspace}{\mu_n}.
\end{equation*}
We conclude by \cref{propositionCharacterizationVariation}.
\end{proof}

In order to have an analogue for the range, we define the Kuratoswki lower limit of a sequence of set. 

\begin{definition}
\label{definition_Kuratowski_lower_limit}
The Kuratowski lower limit of a sequence \((C_n)_{n \in \integers}\) of subsets of \(\targetspace\) is defined as 
\begin{multline*}
  \liminf_{n \to \infty} C_n 
  \defeq
  \Bigl\{ \lim_{n \to \infty} x_n \st \text{the sequence } (x_n)_{n \in \integers} \text{ converges in }V\\[-.7em]
  \text{ and for every \(n \in \integers\), \(x_n \in C_n\)}\Bigr\}.
\end{multline*}
\end{definition}

By a diagonal argument, the Kuratowski lower limit \( \liminf_{n \to \infty} C_n \) is always a closed set.
Moreover, if \(C_n\) is convex for every \(n \in \integers\), then the Kuratowski lower limit \(\liminf_{n \to \infty} C_n\) is convex. Similarly, if for every \(n \in \integers\), \(0 \in C_n\), then \(0 \in \liminf_{n \to \infty} C_n\).

\begin{proposition}[Weak lower-semi continuity of the range]
\label{proposition_wlsc_range}
If \((\mu_n)_{n\in\integers}\) is a sequence of vector measures from \(X\) to \(V\) that converges widely to some vector measure \(\mu : \borel{X} \to V\), then 
\begin{equation*} 
  \range{\mu} \subseteq \liminf_{n\to\infty}\range{\mu_n} . 
\end{equation*}
Moreover, for every \(\varepsilon > 0\), if \(n \in \integers\) is large enough
\begin{equation*}
\range{\mu} \subseteq \range{\mu_n} + \closure{B (0, \varepsilon)}. 
\end{equation*}
\end{proposition}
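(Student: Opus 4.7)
The plan is to combine the dual characterization of the range (Proposition~\ref{proposition_range_duality}) with the definition of wide convergence, then upgrade the resulting pointwise Kuratowski inclusion to a uniform neighborhood inclusion using the compactness of \(\range{\mu}\).

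For the first inclusion, I would start with an arbitrary \(v \in \range{\mu}\). By Proposition~\ref{proposition_range_duality}, there exists a sequence \((\varphi_k)_{k \in \integers}\) in \(C_c(X, [0,1])\) such that \(\int_X \varphi_k \dif\mu \to v\) as \(k \to \infty\). For each fixed \(k\), the wide convergence of \((\mu_n)_{n \in \integers}\) to \(\mu\) (Definition~\ref{definition_wide_convergence}) yields
\begin{equation*}
  \lim_{n \to \infty} \int_X \varphi_k \dif\mu_n = \int_X \varphi_k \dif\mu,
\end{equation*}
and by Proposition~\ref{proposition_range_duality} again, \(\int_X \varphi_k \dif\mu_n \in \range{\mu_n}\) for every \(n \in \integers\). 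Hence \(\int_X \varphi_k \dif\mu \in \liminf_{n \to \infty} \range{\mu_n}\) by Definition~\ref{definition_Kuratowski_lower_limit}. Since the Kuratowski lower limit is closed (a routine diagonal extraction), passing to the limit in \(k\) gives \(v \in \liminf_{n \to \infty} \range{\mu_n}\).

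For the second, uniform statement, the key observation is that \(\range{\mu}\) is a compact subset of the finite-dimensional space \(V\): it is closed by Definition~\ref{definition_range} and bounded by Proposition~\ref{proposition_range_bounded}. Fix \(\varepsilon > 0\) and cover \(\range{\mu}\) by finitely many open balls \(B(v_1, \varepsilon/2), \dotsc, B(v_k, \varepsilon/2)\) with \(v_1, \dotsc, v_k \in \range{\mu}\). By the first part, for each \(i \in \{1, \dotsc, k\}\) there is a sequence \((w_i^n)_{n \in \integers}\) with \(w_i^n \in \range{\mu_n}\) and \(w_i^n \to v_i\). Choose \(N\) large enough that for all \(n \ge N\) and all \(i \in \{1, \dotsc, k\}\) one has \(\abs{w_i^n - v_i} < \varepsilon/2\). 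Then for any \(v \in \range{\mu}\), pick an index \(i\) with \(\abs{v - v_i} < \varepsilon/2\); the triangle inequality yields \(\abs{v - w_i^n} < \varepsilon\), so \(v \in \range{\mu_n} + \closure{B(0, \varepsilon)}\), as desired.

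The main obstacle is really just the smooth integration of the duality argument with wide convergence in the first step — this is where one must be careful that the approximating integrals do indeed lie in \(\range{\mu_n}\) uniformly in \(n\), but this is immediate from Proposition~\ref{proposition_range_duality} applied to each \(\mu_n\) with the \emph{same} test function \(\varphi_k\). The upgrade from Kuratowski to uniform inclusion is then a standard finite-cover argument that relies crucially on finite dimensionality (to ensure compactness of \(\range{\mu}\)).
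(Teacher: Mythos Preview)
Your proof is correct and follows essentially the same approach as the paper's: both use the duality characterization of the range (Proposition~\ref{proposition_range_duality}) together with wide convergence to place every $\int_X \varphi \dif\mu$ in the Kuratowski lower limit, then invoke closedness of that limit; and both upgrade to the uniform inclusion via a finite $\varepsilon/2$-net in the compact set $\range{\mu}$. The only cosmetic difference is that the paper argues directly that the dense set $\{\int_X \varphi \dif\mu : \varphi \in C_c(X,[0,1])\}$ lies in the lower limit and then takes the closure, whereas you fix $v \in \range{\mu}$ first and approximate---logically equivalent.
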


In general, the inclusion is strict. Indeed if the space \(X\) is not discrete and \(\targetspace \ne \{0\}\), there exists a point \(x \in X\) and a sequence \((x_n)_{n \in \integers}\) in \(X \setminus \{x\}\) converging to \(x\). If \(v \in \targetspace \setminus\{0\}\), the sequence \((v\delta_x - v \delta_{x_n})_{n \in \integers}\) converges widely to \(0\), but the range of any of its terms is \([-v, v]\) while the range of the limit is \(0\), so that the inclusion is strict.

\begin{proof}[Proof of \cref{proposition_wlsc_range}]
For every \(\varphi \in C_c (X,[0, 1])\), we observe that for every \(n \in \integers\), in view of \cref{proposition_range_duality}
\begin{equation*}
\lim_{n \to \infty} \int_{X} \varphi \dif \mu_n \in \range{\mu_n}
\end{equation*}
Hence in view of \cref{definition_wide_convergence,definition_Kuratowski_lower_limit}, we have
\begin{equation*}
 \int_{X} \varphi \dif \mu
 = \lim_{n \to \infty} \int_{X} \varphi \dif \mu_n
 \in \liminf_{n \to \infty} \range{\mu_n}.
\end{equation*}
Since the Kuratowski lower limit \(\liminf_{n\to\infty}\range{\mu_n}\) is closed and convex, the conclusion follows from \cref{proposition_range_duality}.

For the additional assertion, let \(\varepsilon > 0\). Since the set \(\range{\mu}\) is compact, there exists a finite set \(F \subseteq \range{\mu}\) such that 
\begin{equation}
\label{eq_Eeyieduw3B}
\range{\mu} \subseteq F + \closure{B (0, \varepsilon/2)}.
\end{equation}
Since the set \(F\) is finite, by the first part of the proof, when \(n \in \integers\) is large enough
\begin{equation}
\label{eq_eeZoo1Ahth}
 F \subset \range{\mu_n} + \closure{B (0, \varepsilon/2)}
\end{equation}
and thus by \eqref{eq_Eeyieduw3B} and \eqref{eq_eeZoo1Ahth} we have 
\begin{equation}
\label{eq_ieX0Cheiph}
 \range{\mu} \subseteq \range{\mu_n} + \closure{B (0, \varepsilon)}. \qedhere
\end{equation}
\end{proof}

\section{Convergence of the total variation and of the range}

We consider in this section the relationship, for widely converging sequences between convergence of the range and convergence of the total variation.

\subsection{Range convergence implies mass convergence}

If a sequence of vector measures converges widely and if its ranges converge, then its total variations converge.

\begin{proof}[Proof of \cref{theorem_range_implies_mass}]
We fix \(\varepsilon \in (0, 1)\) and we let \(\eta_1, \dotsc, \eta_m \in \targetspace'\) and \(\alpha_1,\dots,\alpha_m\in\reals\)
be given by \cref{polytopeApproximation}, so that 
\begin{multline*}
 (1 - \varepsilon) \variation{\targetspace}{\mu_n}(X)
 \le (1 + \varepsilon) \variation{\targetspace}{\mu}(X)
 + \sum_{i = 1}^m 
     \bigabs{\alpha_i\variation{\targetspace}{\dualproduct{\eta_i}{\mu}}(X) - \alpha_i\variation{\targetspace}{\dualproduct{\eta_i}{\mu_n}}(X)}.
\end{multline*}
We observe that for every \(n \in \integers\) and every \(i \in \{1, \dotsc, m\}\),
\begin{multline*}
 \bigabs{\diam (\dualproduct{\eta_i}{\range{\mu_n}}) - \diam (\dualproduct{\eta_i}{\range{\mu_n}})}\\
 \le 2 \,\norm{\targetspace'}{\eta_i}\, \hausdist (\range{\mu_n}, \range{\mu}).
\end{multline*}
By \cref{proposition_variation_wlsc} and by \cref{proposition_range_1d}, we have, as \(n \to \infty\),
\begin{multline*}                                                           
\variation{\targetspace}{\dualproduct{\eta_i}{\mu_n}}(X)
= \diam (\dualproduct{\eta_i}{\range{\mu_n}})\\
\to  \diam (\dualproduct{\eta_i}{\range{\mu}})= \variation{\targetspace}{\dualproduct{\eta_i}{\mu}}(X).                                                                        \end{multline*}
If \(\varepsilon < 1\), it follows that 
\begin{equation*}
 \limsup_{n \to \infty} \variation{\targetspace}{\mu_n} (X)
 \le 
 \frac{1 + \varepsilon}{1 - \varepsilon}
  \variation{\targetspace}{\mu} (X).
\end{equation*}
The conclusion follows by letting \(\varepsilon \to 0\).
\end{proof}

\subsection{One-dimensional case}
When \(\dim \targetspace = 1\), the convergence of the range is equivalent to the convergence of the total variation.

\begin{proposition}
\label{proposition_equivalence_1d}
Let \(X\) be a locally compact separable metric space and let \(V\) be a one-dimensional space. 
Let \((\mu_n)_{n \in \integers}\) be sequence from \(X\) to \(V\) converging widely to some vector measure \(\mu\).
If 
\begin{equation*}
 \limsup_{n \to \infty} \mass{\reals}{\mu_n}
 \le \mass{\reals}{\mu},
\end{equation*}
then the sequence \(\bigl(\range{\mu_n}\bigr)_{n\in\integers}\) converges to \(\range{\mu}\) in Hausdorff distance.
\end{proposition}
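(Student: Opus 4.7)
The plan is to reduce to the scalar case \(\targetspace = \reals\), recast Hausdorff convergence of the ranges as convergence of two real sequences (the endpoints of the intervals \(\range{\mu_n}\)), and then combine the two weak lower-semicontinuity results of the preceding section with the strict mass convergence afforded by the hypothesis to pin those endpoints down.

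First, any seminorm on a one-dimensional space is a non-negative multiple of \(|\cdot|\), so I may assume \(\targetspace = \reals\). Writing the Hahn--Jordan decompositions \(\mu_n = \mu_n^+ + \mu_n^-\) and \(\mu = \mu^+ + \mu^-\), I set \(a_n \defeq \mu_n^-(X)\), \(b_n \defeq \mu_n^+(X)\), \(a \defeq \mu^-(X)\), \(b \defeq \mu^+(X)\). By (the proof of) \cref{proposition_range_1d}, one has \(\range{\mu_n} = [a_n, b_n]\), \(\range{\mu} = [a, b]\), \(\mass{\reals}{\mu_n} = b_n - a_n\), and \(\mass{\reals}{\mu} = b - a\); Hausdorff convergence of the ranges is therefore equivalent to \(a_n \to a\) together with \(b_n \to b\).

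Combining the hypothesis \(\limsup_{n\to\infty}\mass{\reals}{\mu_n} \le \mass{\reals}{\mu}\) with the weak lower-semicontinuity of the total variation (\cref{proposition_variation_wlsc}) yields the strict convergence \(b_n - a_n \to b - a\). Next, the weak lower-semicontinuity of the range (\cref{proposition_wlsc_range}) gives \([a, b] \subseteq \liminf_{n\to\infty}[a_n, b_n]\); applying this to the two endpoints \(a\) and \(b\) produces sequences \(x_n, y_n \in [a_n, b_n]\) with \(x_n \to a\) and \(y_n \to b\), from which \(\limsup_{n\to\infty} a_n \le a\) and \(\liminf_{n\to\infty} b_n \ge b\).

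The final step, which I expect to be the only one requiring any care, is to upgrade these one-sided bounds into genuine convergence by using the strict mass convergence. Since \(\lim_{n\to\infty}(b_n - a_n)\) exists and equals \(b - a\), one has \(\limsup_{n\to\infty} b_n = (b-a) + \limsup_{n\to\infty} a_n \le b\), which together with \(\liminf_{n\to\infty} b_n \ge b\) forces \(b_n \to b\); writing \(a_n = b_n - (b_n - a_n)\) then gives \(a_n \to a\). Hausdorff convergence of the intervals \([a_n, b_n]\) to \([a, b]\) follows, completing the proof. No substantial obstacle remains: the whole argument hinges on the fact that a one-dimensional range is fully encoded by two real parameters, so that the two-sided control from weak lower-semicontinuity combined with the quantitative control from mass convergence squeezes each endpoint to its limit.
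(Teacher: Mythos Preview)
Your proof is correct and follows essentially the same strategy as the paper's: both combine the weak lower semicontinuity of the range (\cref{proposition_wlsc_range}) with the mass hypothesis to trap the ranges. The only cosmetic difference is that you parametrize by the endpoints \(a_n,b_n\) and invoke \cref{proposition_variation_wlsc} to get full convergence of \(b_n-a_n\), whereas the paper writes the reverse inclusion directly as \(\range{\mu_n}\subseteq\range{\mu}+\closure{B\bigl(0,(\varepsilon+\diam\range{\mu_n}-\diam\range{\mu})_+\bigr)}\), which needs only the \(\limsup\) bound from the hypothesis.
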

\begin{proof}
Let \(\varepsilon > 0\). By \cref{proposition_wlsc_range}, 
if \(n \in \integers\) is large enough, we have
\begin{equation}
\label{eq_phikaiNie2}
\range{\mu}
\subseteq \range{\mu_n} 
 + \closure{B (0, \varepsilon)}.
\end{equation}
It follows then that 
\begin{multline}
\label{eq_ieVeiSh3oo}
 \range{\mu_n}
 \subseteq \range{\mu}\\
 +  \closure{B \bigl(0, \bigl(\varepsilon + \diam (\range{\mu_n}) - \diam (\range{\mu})\bigr)_+\bigr)}\;.
\end{multline}
The conclusion follows from \eqref{eq_phikaiNie2} and \eqref{eq_ieVeiSh3oo} since \(\varepsilon > 0\) is arbitrary.
\end{proof}

A corollary of the one dimensional case described above in \cref{proposition_equivalence_1d} is the
following equivalence criterion for range convergence, in terms
of masses of projection measures:

\begin{corollary}
\label{lemma_pointwise_to_hausdorff}
Let \((\mu_n)_{n \in \integers}\) be a sequence of vector measures.
The sequence
\((\range{\mu_n})_{n \in \integers}\) converges to \(\range{\mu}\) in Hausdorff distance if, and only if,
for every \(\eta \in \targetspace'\), 
\begin{equation*}
  \limsup_{n \to \infty} \variation{\reals}{\dualproduct{\eta}{\mu_n}}(X)
  \le \variation{\reals}{\dualproduct{\eta}{\mu}}(X) .
  \end{equation*}
\end{corollary}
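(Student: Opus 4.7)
The forward implication is largely direct. For any $\eta \in \targetspace'$, the linear functional $v \mapsto \dualproduct{\eta}{v}$ is continuous, so Hausdorff convergence is preserved: by the linearity of the range (\cref{proposition_range_linear}), $\range{\dualproduct{\eta}{\mu_n}} = \dualproduct{\eta}{\range{\mu_n}}$ converges to $\dualproduct{\eta}{\range{\mu}} = \range{\dualproduct{\eta}{\mu}}$ in Hausdorff distance on $\reals$. The diameter is continuous with respect to Hausdorff convergence, so combining with \cref{proposition_range_1d} gives $\variation{\reals}{\dualproduct{\eta}{\mu_n}}(X) \to \variation{\reals}{\dualproduct{\eta}{\mu}}(X)$, yielding the required limsup bound (in fact an equality).

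For the converse, the plan is to reduce everything to the one-dimensional case via \cref{proposition_equivalence_1d}. Since $\mu_n \to \mu$ widely, we also have $\dualproduct{\eta}{\mu_n} \to \dualproduct{\eta}{\mu}$ widely as signed measures for each $\eta \in \targetspace'$. Weak lower semi-continuity of the total variation (\cref{proposition_variation_wlsc}) combined with the hypothesis forces $\variation{\reals}{\dualproduct{\eta}{\mu_n}}(X) \to \variation{\reals}{\dualproduct{\eta}{\mu}}(X)$. Applying \cref{proposition_equivalence_1d} to the one-dimensional signed measures $\dualproduct{\eta}{\mu_n}$ yields Hausdorff convergence of the intervals $\range{\dualproduct{\eta}{\mu_n}} \to \range{\dualproduct{\eta}{\mu}}$ for every $\eta \in \targetspace'$.

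The remaining task is to upgrade this directionwise convergence into Hausdorff convergence of $\range{\mu_n}$ itself. First, I would use \cref{polytopeApproximation} to choose $\eta_1,\dotsc,\eta_m$ such that $(1-\varepsilon)\variation{\targetspace}{\mu_n}(X) \le \sum_{i=1}^m \variation{\reals}{\dualproduct{\eta_i}{\mu_n}}(X)$; since the right-hand side converges, $(\variation{\targetspace}{\mu_n}(X))_{n \in \integers}$ is bounded, and by \cref{proposition_boundedness} the sets $\range{\mu_n}$ are uniformly bounded in $\targetspace$. \Cref{proposition_wlsc_range} already provides the inclusion $\range{\mu} \subseteq \range{\mu_n} + \closure{B(0,\varepsilon)}$ for $n$ large. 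For the reverse inclusion, I would argue by contradiction: if some $v_{n_k} \in \range{\mu_{n_k}}$ has distance bounded away from $\range{\mu}$, extract a subsequential limit $v_\star \notin \range{\mu}$ using the uniform bound, and separate $v_\star$ from the closed convex set $\range{\mu}$ by some $\eta \in \targetspace'$. Taking supremum of $\dualproduct{\eta}{\cdot}$ then contradicts the Hausdorff convergence of $\range{\dualproduct{\eta}{\mu_n}}$ to $\range{\dualproduct{\eta}{\mu}}$ established in the previous paragraph.

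The only mildly subtle step is the compactness/separation argument in the last paragraph, which is standard convex geometry once the uniform boundedness is secured through \cref{polytopeApproximation} and \cref{proposition_boundedness}; all other steps reduce to invocations of previously established results.
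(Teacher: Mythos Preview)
Your proof is correct and follows the same overall architecture as the paper: reduce to the one-dimensional case through \cref{proposition_equivalence_1d}, then upgrade directionwise convergence of the intervals \(\range{\dualproduct{\eta}{\mu_n}}\) to Hausdorff convergence of \(\range{\mu_n}\), combining with \cref{proposition_wlsc_range} for the other inclusion. The two proofs diverge only in that last upgrading step. The paper approximates the compact convex set \(\range{\mu}\) from outside by a polytope \(\{v : \dualproduct{\eta_i}{v}\le 1,\ i=1,\dotsc,m\}\) and uses the one-dimensional convergence at the finitely many directions \(\eta_1,\dotsc,\eta_m\) to trap \(\range{\mu_n}\) inside a slight enlargement; this avoids any need for uniform boundedness of the \(\range{\mu_n}\). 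You instead run a compactness--Hahn--Banach contradiction, which obliges you to first bound \(\sup_n \variation{\targetspace}{\mu_n}(X)\) via \cref{polytopeApproximation} and \cref{proposition_boundedness}. Both routes are standard convex-geometry arguments that convergence of support functions implies Hausdorff convergence; the paper's is slightly more direct, yours is slightly more self-contained in that it makes the compactness explicit. For the forward implication, you argue via continuity of the diameter under Hausdorff convergence, while the paper invokes \cref{theorem_range_implies_mass} for the degenerate seminorm \(v\mapsto \abs{\dualproduct{\eta}{v}}\); these are equivalent observations.
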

\begin{proof}
Let \(\varepsilon > 0\). 
For every \(\eta \in  \targetspace'\), the sequence \((\dualproduct{\eta}{\mu_n})_{n \in \integers}\) of signed measures converges widely to the signed measure \(\dualproduct{\eta}{\mu}\).
By \cref{proposition_equivalence_1d}, we have then
\begin{equation}
\label{eq_JaePeeGh5u}
    \lim_{n \to \infty} 
      \hausdist 
	\bigl(\range{\dualproduct{\eta_i}{\mu_n}}, \range{\dualproduct{\eta_i}{\mu}}\bigr) 
  = 
    0.
\end{equation}
Since the set \(\range{\mu} \subset \targetspace\) is compact and convex, there exist \(\eta_1, \dotsc, \eta_m \in \targetspace'\) and \(\delta > 0\) such that
\begin{equation}
\label{eq_ieBoh5IoZu}
 \range{\mu} \subseteq \bigl\{ v \in \targetspace \st \text{ for each \(i \in \{1, \dotsc, m\}\), } \dualproduct{\eta_i}{v} \le 1\bigr\}
\end{equation}
and 
\begin{multline}
\label{eq_ooloshaeN3}
 \bigl\{ v \in \targetspace \st \text{ for each \(i \in \{1, \dotsc, m\}\) } \dualproduct{\eta_i}{v} \le 1 + \delta\bigr\}\\
 \subseteq \range{\mu} + \closure{B (0, \delta)}.
\end{multline}
In particular, if \(n \in \integers\) is large enough, we have by \cref{proposition_range_linear} and by  \eqref{eq_JaePeeGh5u} and \eqref{eq_ieBoh5IoZu},
\begin{equation*}
  \sup \, \dualproduct{\eta_i}{\range{\mu_n}}
  = \sup \range{\,\dualproduct{\eta_i}{\mu_n}}
  \le 1 + \delta,
\end{equation*}
and thus by \eqref{eq_ooloshaeN3}
\begin{equation}
\label{eq_thiu1Iejie}
 \range{\mu_n}
 \subseteq \range{\mu} + \closure{B (0, \varepsilon)}
 .
\end{equation}

We conclude the first implication by combining the inclusion \eqref{eq_thiu1Iejie} with \cref{proposition_wlsc_range}.
The converse claim of the proposition follows from the fact that
range convergence always implies mass convergence, and
	\begin{equation*} \variation{\reals}{\dualproduct{\eta}{\range{\mu}}}
		= \variation{\tilde{V}}{\mu} ,\end{equation*}
where \(\tilde{V}\) is the vector space \(V\) endowed with the
degenerate norm
  \begin{equation*} 
    \norm{\tilde{V}}{x} = |\scalarproduct{x}{\eta}{}| .\qedhere
  \end{equation*}
\end{proof}

\subsection{Higher-dimensional case}

In this section, we prove an equivalence result between
convergence of the range and convergence of the total mass,
when the mass is taken with respect to any strictly convex norm.

A seminorm \(\norm{\targetspace}{\cdot}\) is \emph{strictly convex} whenever if the vectors \(v, w \in V\) are linearly independent, then
\begin{equation*}
 \norm{\targetspace}{v + w} 
 < \norm{\targetspace}{v} + \norm{\targetspace}{w}.
\end{equation*}

As claimed in \cref{proposition_strict_convexity} stated in the introduction,
strict convexity of the norm is necessary to enforce the convergence of ranges.
The proof of~\cref{proposition_strict_convexity} we propose is based
on an straightforward counter-example involving Dirac masses:

\begin{proof}[Proof of \cref{proposition_strict_convexity}]
Since by our assumption the seminorm \(\norm{\targetspace}{\cdot}\) is not strictly convex, there exist two linearly independent vectors \(v, w\in\targetspace\setminus\{0\}\) such that
\(\norm{\targetspace}{v+w} = \norm{\targetspace}{v} + \norm{\targetspace}{w}\).
Since the space \(X\) is not discrete, there exists a point \(x \in X\) and a sequence of points \((x_n)_{n \in \integers}\) in \(X \setminus \{x\}\) that converges to \(x\).
We define for each  
\(n\in\integers\), the vector measure
\begin{equation*} 
    \mu_n 
  \defeq
    v \,\delta_{x} + w\, \delta_{x_n} 
    .
\end{equation*}
The sequence \((\mu_n)_{n\in\integers}\) then widely converges to the measure
\(\mu = (v + w)\,\delta_{x}\)
and we have for each \(n \in \integers\)
  \begin{equation*} 
      \mass{\targetspace}{\mu} 
    = 
      \norm{\targetspace}{v+w} 
    = 
  \norm{\targetspace}{v} 
      + 
  \norm{\targetspace}{w}
  = \mass{\targetspace}{\mu_n} .
  \end{equation*}
On the other hand the range of \(\range{\mu_n} = \chull \{0,v,w,v+w\}\)
 while \(\range{\mu} =  \chull \{0,v+w\}\) corresponds to the diagonal of the parallelogram \(\range (\mu_n)\). 
Since the vectors \(v\) and \(w\) are linearly independent, the sequence of ranges does not converge.
\end{proof}  

In particular, by \cref{proposition_strict_convexity} and \cref{proposition_equivalence_1d} there exists a linear map \(T :\targetspace \to \reals\) such that the sequence \((\variation{\reals}{T \circ \mu_n}(\Omega))_{n \in \integers}\) does not converge to \(\variation{\reals}{T \circ \mu}(\Omega)\). 
That is, in contrast with wide convergence and with convergence of ranges, the convergence of total variations is not invariant under linear changes of variables.

\begin{theorem}\label{theorem_strict_convexity_equivalence}
Assume that \(X\) is a locally compact separable metric space, and let
\((\mu_n)_{n\in\integers}\) be a sequence of vector measures weakly converging
to some vector measure \(\mu\). If \(\norm{\targetspace}{\cdot}\) is a strictly
convex norm on \(\targetspace\) such that
	\begin{equation*} \lim_{n\to\infty}\variation{\targetspace}{\mu_n} (X)
		= \variation{\targetspace}{\mu} (X),\end{equation*}
then the sequence of ranges \((\range{\mu_n})_{n \in \integers}\) converges
to \(\range{\mu}\) in Hausdorff distance. 
\end{theorem}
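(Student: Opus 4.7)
The plan is to reduce, via \cref{lemma_pointwise_to_hausdorff}, to showing $\variation{\reals}{\dualproduct{\eta}{\mu_n}}(X) \to \variation{\reals}{\dualproduct{\eta}{\mu}}(X)$ for every $\eta \in \targetspace'$, and then to obtain this convergence through a Young-measure lifting of the $\mu_n$ onto $X \times S$, where $S := \{v \in \targetspace \st \norm{\targetspace}{v} = 1\}$ is the (compact) unit sphere of the norm.

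I would begin by promoting the hypothesis to wide convergence $\variation{\targetspace}{\mu_n} \to \variation{\targetspace}{\mu}$ of finite positive Radon measures on $X$, together with convergence of total masses. By Banach--Alaoglu, any subsequence has a further wide limit $\tilde\nu \ge 0$; duality (via \cref{propositionCharacterizationVariation}) together with the wide convergence of $\mu_n$ gives $\int \psi \dif \variation{\targetspace}{\mu} \le \int \psi \dif \tilde\nu$ for every non-negative $\psi \in C_c(X)$, so $\variation{\targetspace}{\mu} \le \tilde\nu$; the assumed mass convergence then forces equality, and uniqueness of the subsequential limit upgrades this to convergence of the whole sequence.

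Next I would fix polar decompositions $\mu_n = \sigma_n \variation{\targetspace}{\mu_n}$ and $\mu = \sigma \variation{\targetspace}{\mu}$ with $\sigma_n, \sigma$ taking values in $S$, and define lifted Radon measures $\tilde\mu_n$ on $X \times S$ by $\int \varphi \dif \tilde\mu_n \defeq \int_X \varphi(x, \sigma_n(x)) \dif \variation{\targetspace}{\mu_n}(x)$. Since $S$ is compact and the projections $\variation{\targetspace}{\mu_n}$ converge tightly, the sequence $(\tilde\mu_n)$ is itself tight; extracting a subsequential wide limit $\tilde\nu$ and disintegrating $\tilde\nu = \int \tilde\nu_x \dif \variation{\targetspace}{\mu}(x)$ with $\tilde\nu_x$ a probability on $S$, testing against $\varphi(x, u) = \psi(x) \dualproduct{\xi}{u}$ for $\psi \in C_c(X)$ and $\xi \in \targetspace'$ and using the wide convergence of $\mu_n$ identifies the barycenters:
\[\int_S u \dif \tilde\nu_x(u) = \sigma(x) \quad \text{for } \variation{\targetspace}{\mu}\text{-a.e.\ } x.\]

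The crucial step, and the principal obstacle, is to upgrade this barycenter identity to $\tilde\nu_x = \delta_{\sigma(x)}$. By Hahn--Banach, for $\variation{\targetspace}{\mu}$-a.e.\ $x$ there exists $\eta_x \in \targetspace'$ with $\norm{\targetspace'}{\eta_x} = 1$ and $\dualproduct{\eta_x}{\sigma(x)} = 1$; strict convexity of $\norm{\targetspace}{\cdot}$ ensures $\sigma(x)$ is the \emph{unique} $v \in \closure{B_\targetspace(0, 1)}$ with $\dualproduct{\eta_x}{v} = 1$, since a second such $w$ would force $(w + \sigma(x))/2$ onto the unit sphere, contradicting strict convexity. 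The chain
\[1 = \dualproduct{\eta_x}{\sigma(x)} = \int_S \dualproduct{\eta_x}{u} \dif \tilde\nu_x(u) \le \int_S 1 \dif \tilde\nu_x = 1\]
then forces $\tilde\nu_x = \delta_{\sigma(x)}$, whence $\tilde\nu$ coincides with the analogous lift of $\mu$ and the whole sequence $(\tilde\mu_n)$ converges widely to it. Testing this convergence against $\varphi(x, u) = \chi(x)\abs{\dualproduct{\eta}{u}}$ for $\chi \in C_c(X, [0, 1])$ and letting $\chi \uparrow 1$ via tightness yields $\variation{\reals}{\dualproduct{\eta}{\mu_n}}(X) \to \variation{\reals}{\dualproduct{\eta}{\mu}}(X)$, concluding via \cref{lemma_pointwise_to_hausdorff}.
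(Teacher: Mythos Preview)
Your proof is correct and follows essentially the same approach as the paper's, namely Reshetnyak's method of lifting the measures to \(X \times S\) (the paper calls these the \emph{indicator measures} \(\chi_{\mu_n}\)), extracting a subsequential limit, disintegrating it, using strict convexity to force the fibre measures to be Dirac masses, and finally testing against \((x,u)\mapsto\abs{\dualproduct{\eta}{u}}\) to feed into \cref{lemma_pointwise_to_hausdorff}. The only organizational difference is that you first establish the wide (indeed tight) convergence \(\variation{\targetspace}{\mu_n}\to\variation{\targetspace}{\mu}\) and then disintegrate over the known base \(\variation{\targetspace}{\mu}\), whereas the paper disintegrates over an a~priori unknown marginal \(\omega\) and identifies \(\omega=\variation{\targetspace}{\mu}\) afterwards via the chain of inequalities; this is a harmless reordering of the same ideas.
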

The proof follows
lines of thought from the work of Reshetnyak~\cite{Reshetnyak}*{Theorem~3},
Since the approach requires some tools from measure theory,
we repeat the proof with explanations on how the tools are used, and
we chain the argument with \cref{lemma_pointwise_to_hausdorff}
to obtain Hausdorff convergence of the ranges.

\begin{proof}[Proof of~\cref{theorem_strict_convexity_equivalence}]
For all \(n\in\integers\), let \(\chi_{\mu_n}\) be the \emph{indicator measure}~\cite{Reshetnyak} of \(\variation{\targetspace}{\mu}\), defined as the unique measure
	\begin{equation*} \chi_{\mu} : \mathscr{B}(X\times \sphere)
		\to [0, +\infty) \end{equation*}
acting on any continuous function \(\varphi\in C(X\times\sphere,\reals)\) as
	\begin{equation*} \int_{X\times\sphere}\varphi(x,e)\dif\chi_{\mu}(x,e)
        = \int_{X}
        \varphi\Bigl(x,\frac{\dif \mu_n}{\dif \variation{\targetspace}{\mu}}(x)\Bigr) \dif \variation{\targetspace}{\mu} (x).\end{equation*}

Since we are working with vector measures,
and because the ranges and the masses remain uniformly bounded for \(n\in\integers\),
we may pass to subsequences and
assume without loss of generality
that the sequence \((\chi_{\mu_n})_{n\in\integers}\) weakly converges
to some measure \(\rho\).
As in~\cite{Reshetnyak}*{Theorem~3}, we are going to prove that
the limit \(\rho\) actually equals \(\chi_{\mu}\), the indicator function of \(\variation{\targetspace}{\mu}\).
To that purpose, we use~\cite{Reshetnyak}*{Theorem~1} to obtain a layerwise decomposition \((\omega,\lambda)\) 
of the indicator measure \(\rho\)~\cite{Reshetnyak}, in the sense that
\(\omega: X\to[0, +\infty)\) is a positive measure and \(\lambda: X\to\mathcal{M}\big(\mathscr{B}(\sphere)\big)\) is a function
such that for every \(\varphi \in C (X \times \sphere)\),
\begin{equation*} \int_{X\times\sphere}\varphi(x,e)\dif\rho(v,e)
= \int_X\int_\sphere\varphi(x,e)\dif \lambda(x,e)\dif\omega(x) .
\end{equation*}
Moreover, for this layerwise decomposition, the function
	\begin{equation*} 
          x\in X \mapsto u(x) = \int_{\sphere}e\dif\lambda(x,e) 
        \end{equation*}
        is \(\mathscr{B}(X)\)-measurable and for every \(A \in \borel{X}\),
	\begin{equation}\label{eqn_identity_RThm1}
		\int_Au(x)\dif\omega(x)
			= \int_A\int_{\sphere}e\dif\lambda(x,e)\dif\omega(x) = \mu(A) .
	\end{equation}
The existence of such decomposition with relation~\eqref{eqn_identity_RThm1}
only uses the weak convergence assumption
\(\mu_n\rightharpoonup\mu\).
By convexity of the norm, we have
	\begin{equation}\label{inequality_resolved_by_identity}
		\norm{\targetspace}{u(x)} \leq \int_{\sphere}\dif\lambda(x,e) \leq 1 .
	\end{equation}
Those inequality follow from~\cite{Reshetnyak}*{Theorem~1}, where it is
proved that \(\lambda(x,\sphere)=1\) for \(\omega\)-almost-all \(x\in X\).
Integrating left sides of the above inequality \eqref{inequality_resolved_by_identity} with respect to the measure \(\omega\)
also yields
	\begin{equation*} \int_X\norm{\targetspace}{u(x)}\dif\omega(x)
		\leq \int_X\int_{\sphere}\dif\lambda(x,e)\dif\omega(e)
		= \int_{X\times\sphere}\dif\rho(x,e) ,\end{equation*}
and by construction of \(\rho\):
	\begin{equation*} \int_{X\times\sphere}\dif\rho(x,e)
		= \lim_{n\to\infty}\int_{X\times\sphere}\dif\chi_{\mu_n}(x,e)
		= \lim_{n\to\infty}\variation{\targetspace}{\mu_n}(X) = \variation{\targetspace}{\mu}(X) .\end{equation*}
Thus
	\begin{equation*} \int_X\norm{\targetspace}{u(x)}\dif\omega(x)
		\leq \variation{\targetspace}{\mu} .\end{equation*}

For \(A\in\mathscr{B}(X)\) an
arbitrary Borel set, we observe that
	\begin{equation*} \int_A\frac{\dif \mu}{\dif \variation{\targetspace}{\mu}} \dif\variation{\targetspace}{\mu}  = \mu(A)
		= \int_A u\dif\omega .\end{equation*}
Considering the Radon-Nikodym derivatives of \(\variation{\targetspace}{\mu}\) and \(\omega\)
with respect to the measure \(\sigma\defeq(\variation{\targetspace}{\mu}+\omega)\), we obtain
	\begin{equation*} \int_A \frac{\dif \mu}{\dif \variation{\targetspace}{\mu}}\ \frac{\dif\variation{\targetspace}{\mu}}{\dif(\omega+\variation{\targetspace}{\mu})} \dif\sigma
		= \int_A u\ \frac{\dif\omega}{\dif(\omega+\variation{\targetspace}{\mu})}
			\dif\sigma ,\end{equation*}
and since the set \(A\in\mathscr{B}(X)\) is arbitrary, we obtain 
\begin{equation}\label{eqn_nikodym_identity}
  u\ \frac{\dif\omega}{\dif(\omega+\variation{\targetspace}{\mu})}
  = \frac{\dif \mu}{\dif \variation{\targetspace}{\mu}}\ \frac{\dif\variation{\targetspace}{\mu}}{\dif(\omega+\variation{\targetspace}{\mu})} 
\end{equation}
\((\variation{\targetspace}{\mu}+\omega)\)-almost-everywhere on \(X\).
Using the fact that \(\omega\) and \(\variation{\targetspace}{\mu}\) are positive measures, 
we take norms on both sides of \eqref{eqn_nikodym_identity} and we integrate both
sides with respect to \(\sigma\) to obtain
	\begin{equation*} \int_X\norm{\targetspace}{u(x)}\dif\omega(x) 
		\int_X \biggnorm{\targetspace}{\frac{\dif \mu}{\dif \variation{\targetspace}{\mu}}(x)}\dif\variation{\targetspace}{\mu}(x)
			= \variation{\targetspace}{\mu}(X) \end{equation*}
and equality holds in the inequality~\eqref{inequality_resolved_by_identity}: we have
\(\norm{\targetspace}{u(x)}=1\) for \(\omega\)-almost-all \(x\in X\).
In particular, \(\omega=\variation{\targetspace}{\mu}\)
and \(u=\frac{\dif \mu}{\dif \variation{\targetspace}{\mu}}\) on \(X\).

The strict convexity of the norm \(\norm{\targetspace}{\cdot}\) may be used
to observe that in fact, \(\lambda(x,\cdot)\) is a Dirac mass at \(u(x)\), for
\(\variation{\targetspace}{\mu}\)-almost-all \(x\in X\). Indeed, this follows from
the fact that
	\begin{equation*} 
          \biggnorm{\targetspace}{\int_{\sphere}e\dif\lambda(x,e)}
		= \int_{\sphere}\norm{\targetspace}{e}\dif\lambda(x,e) ,\end{equation*}
which is possible if and only if \(\lambda(x,\cdot)\) is a multiple of a Dirac mass.
The concentration point should thus be \(u(x)\), and the multiple is \(1\).

Now we fix an arbitrary direction \(\eta\in\sphere\).
Considering again Radon-Nikodym derivatives, it is straightforward to check
the identity
	\begin{equation*} \variation{\reals}{\dualproduct{\mu}{\eta}}(X)
		= \int_X\Big|\Bigdualproduct{\frac{\dif \mu}{\dif \variation{\targetspace}{\mu}}(x)}{\eta}\Big|\dif\variation{\targetspace}{\mu}(x) ,\end{equation*}
or equivalently
	\begin{equation*} \variation{\reals}{\dualproduct{\mu}{\eta}}(X)
		= \int_X\big|\dualproduct{u(x)}{\eta}\big|\dif\omega(x) .\end{equation*}
Using the fact that \(\lambda(x,\cdot)\) is a Dirac mass at \(u(x)\), the inner
absolute value commutes the integral to give
	\begin{equation*} \int_X\Bigl|\Bigdualproduct{u(x)}{\eta}\Bigr|\dif\omega(x)
		= \int_X\int_{\sphere}|\dualproduct{e}{\eta}|\dif\lambda(x,e)\dif\omega(x)
		= \int_{X\times\sphere}|\dualproduct{e}{\eta}|\dif\rho(x,e) .\end{equation*}
Now we always have
	\begin{equation*} \variation{\reals}{\dualproduct{\mu_n}{\eta}}(X)
		= \int_X\Big|\Bigdualproduct{\frac{\dif \mu}{\dif \variation{\targetspace}{\mu}}(x)}{\eta}\Big|\dif\variation{\targetspace}{\mu}(x) = \int_{X\times\sphere}|\dualproduct{e}{\eta}|\dif\chi_{\mu_n}(x,e) ,\end{equation*}
and therefore, by definition of \(\rho\), we obtain
	\begin{equation*} \lim_{n\to\infty}\variation{\reals}{\dualproduct{\mu_n}{\eta}}(X)
		= \variation{\reals}{\dualproduct{\mu}{\eta}}(X) .\end{equation*}
Since this holds for all \(\eta\in\sphere\), we obtain Hausdorff convergence of the range,
by~\cref{lemma_pointwise_to_hausdorff}.
\end{proof}

\Cref{theorem_range_implies_mass} and \cref{theorem_strict_convexity_equivalence}
imply immediately \cref{theorem_Euclidean}.

\section{Zonal representation of masses}

In this section, we develop a geometric approach to manage the
relation between range convergence and mass convergence. Although
the geometric approach will give much less results when
\(\dim(\targetspace)\geq 3\), it provides an interesting geometric point of view
of the relation between mass convergence and strict convergence
when \(\dim(\targetspace)=2\).

\subsection{Zonal representation formula}
We will show that seminorms having a zonal representation have an associated mass that has an integral representation. 

\begin{definition}
  \label{def_zonal_representation}
A locally finite non-negative Borel measure \(\sigma:\borel{\targetspace'}\to
  [0, +\infty]\) is a \emph{zonal representation} of a seminorm \(\norm{\targetspace}{\cdot}\)
 whenever 
\begin{equation*}
  \int_{\targetspace'} \norm{\targetspace'}{\eta} \dif \sigma (\eta) < + \infty,
\end{equation*}
and for every \(v \in \targetspace\),
\begin{equation*}
\label{eq_zonal_representation}
  \norm{\targetspace}{v} 
  = 
  \int_{\targetspace'} \abs{\dualproduct{\eta}{v}} \dif \sigma(\eta).
\end{equation*}
\end{definition}

As an example, we can consider the standard Euclidean
norm in \(\reals^2\) and the uniform measure \(\nu\) on
the unit circle \(\sphere \subset \reals^2\), giving
	\begin{equation*} \int_{\sphere}|\dualproduct{v}{e}|\dif\rho(e)
		= \frac{|v|_2}{2\pi}\int_0^{2\pi}|\cos(\theta)|\dif\theta
		= |v|_2\,\frac{2}{\pi} ,\end{equation*}
and the zonal representation holds with
\(\sigma=\frac{\pi}{2}\rho\).

More generally, every Euclidean norm admits a zonal
representation.
\begin{theorem}[Representation of Euclidean norms]
If \(\norm{\targetspace}{\cdot}\) is a Euclidean semi-norm
on \(\targetspace\), then it admits a zonal
representation.
\end{theorem}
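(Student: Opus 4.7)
The plan is to reduce to the standard Euclidean norm on \(\reals^{\spacedim}\) and conclude by rotational invariance of the unit sphere.

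First I would dispose of the kernel \(K \defeq \{v \in \targetspace \st \norm{\targetspace}{v} = 0\}\). Since \(\norm{\targetspace}{\cdot}\) is induced by a positive semidefinite symmetric bilinear form, \(K\) is a linear subspace and the quotient \(W \defeq \targetspace/K\) carries a genuine Euclidean norm \(\norm{W}{\cdot}\). By the characterization following \eqref{eq_dual_norm_primal}, the space \(\dualspace\) consists precisely of the linear forms on \(\targetspace\) annihilating \(K\), so the canonical map \(W^{*} \to \dualspace\) is an isomorphism that is an isometry for the dual norms. Producing a zonal representation on \(W^{*}\) and pushing it forward therefore settles the general seminorm case.

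Next I would fix an orthonormal basis of \(W\) to identify \((W, \norm{W}{\cdot})\) and its dual with \(\reals^{\spacedim}\) endowed with the standard inner product, and consider on this model the function
\begin{equation*}
  f(v) \defeq \int_{\sphere} \abs{\dualproduct{\eta}{v}} \dif \mathcal{H}^{\spacedim-1}(\eta),
\end{equation*}
where \(\sphere\) is the Euclidean unit sphere and \(\mathcal{H}^{\spacedim-1}\) is the surface measure. Two observations finish the construction: the Hausdorff measure on \(\sphere\) is invariant under the orthogonal group, so \(f\) is rotationally invariant, and \(f\) is positively homogeneous of degree one. Together these force \(f(v) = c_{\spacedim}\, \abs{v}_{2}\) with
\begin{equation*}
  c_{\spacedim} \defeq \int_{\sphere} \abs{\eta_{1}} \dif \mathcal{H}^{\spacedim-1}(\eta) \in (0, +\infty),
\end{equation*}
so I would set \(\sigma(B) \defeq c_{\spacedim}^{-1}\, \mathcal{H}^{\spacedim-1}(B \cap \sphere)\) for \(B \in \borel{\dualspace}\). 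This is a finite, hence locally finite, measure concentrated on the compact set \(\sphere\) on which \(\norm{\dualspace}{\cdot}\) is bounded, so the integrability condition in \cref{def_zonal_representation} holds, and the defining identity is immediate.

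There is no serious obstacle here. The only care needed is in the first step, to verify that the canonical identification \(\dualspace \cong W^{*}\) preserves the dual norm and hence maps zonal representations to zonal representations. Everything else is a classical symmetry argument on \(\reals^{\spacedim}\): any positively homogeneous rotationally invariant function must be a positive multiple of the Euclidean norm.
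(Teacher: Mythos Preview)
Your proposal is correct and follows essentially the same route as the paper: both reduce to the standard Euclidean norm on \(\reals^{\spacedim}\) (the paper via ``a linear isometry and an orthogonal projection'', you via the quotient by the kernel) and then use rotational invariance of the surface measure on the sphere to conclude that the integral \(\int_{\sphere}\abs{\dualproduct{\eta}{v}}\dif\eta\) is a constant multiple of \(\norm{2}{v}\). Your treatment of the seminorm reduction and the integrability check in \cref{def_zonal_representation} is more explicit than the paper's, but the core argument is identical.
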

\begin{proof}
Up to a linear isometry and an orthogonal projection, we can assume that \(\targetspace=\reals^d\) and
\(\norm{\targetspace}{\cdot}=\norm{2}{\cdot}\) is the standard Euclidean
norm. Then we have
	\begin{equation*} \norm{2}{v}
		= \alpha\int_{\sphere}|\dualproduct{v}{e}|\dif\sigma(e) ,\end{equation*}
where \(\sigma\) is the uniform surface measure on the
Euclidean sphere \(\sphere\), and \(\alpha=\alpha(d)>0\)
is a normalization constant. The above identity
directly follows from the fact that the uniform
surface measure is the unique (up to multiplication
constants) Borel measure on \(\sphere\) that is stable
under isometries of \(\reals^d\).
\end{proof}

A norm \(\norm{\targetspace}{\cdot}\) has a zonal representation if and only if the associated unit ball in \(\dualspace\) is a \emph{zonoid}, which can be defined as the ranges of vector measure \citelist{\cite{Choquet}\cite{Bolker_1969}*{Theorem 2.1}}.

If \(\targetspace\) is a two-dimensional space, every norm has a zonal representation in the sense of \cref{def_zonal_representation} (\citelist{\cite{Choquet}*{41.2}\cite{Bolker_1969}*{Theorem 5.4}}). This follows from the fact that, in the plane \(\reals^2\), every symmetric compact convex set may be approximated by a symmetric convex polygon with is automatically a zonotope in \(\reals^2\).

For example, the usual Euclidean norm is represented by the uniform
measure on the dual sphere \(\sphere\) of \(V'\) since 
the uniform measure is the only (up to multiplicative constants) invariant measure under rotations
and thus, balls induced by its associated norm are Euclidean balls. This argument is
also valid in higher dimensions.

Another example is given by the \(\ell^1\)-norm on \(\reals^d\) defined for \(v \in \reals^d\) by  
\begin{equation*}
 \norm{\infty}{v} 
 \defeq 
 \sum_{i = 1}^d \abs{v_1},
\end{equation*}
which 
can be represented with \(\sigma = \sum_{i = 1}^d \delta_{\eta_i}\),
where \(\eta_i (v_1, \dotsc, v_d) = v_i\).

\medbreak

In higher dimensions \(\dim V \ge 3\), there are some norms that do not have a zonal representation. 
If \(v = (v_1, \dotsc, v_d) \in \reals^d\), then 
  \begin{equation*} 
    \norm{\infty}{v} = \max \{\abs{v_1}, \dotsc, \abs{v_d}\}.
  \end{equation*}
\emph{has no zonal representation} of the form \cref{def_zonal_representation} by any positive measure \(\sigma\).
Indeed, if such measure \(\sigma\) existed, then its dual ball, which is the \(\ell^1\)--ball, should be a zonotope.
However, the \(1\)--ball is a hyperoctahedron, which has some \(2\)--dimensional that are triangles, and is thus not a zonotope \cite{Bolker_1969}*{Theorem 3.3}.

The above example is not related with the fact that the \(\ell^1\) and \(\ell^\infty\) norms
are not strictly convex. In fact, there are also strictly convex norms that are do not have a zonal representation \cref{def_zonal_representation}. 
Indeed, in \(\reals^3\), the \(p\)--norm does not have a zonal representation, if \(p \ge \frac{\log{3}}{\log \frac{3}{2}}\), and for every \(p > 2\) it is known that the \(p\)--norm does not have a zonal representation is the dimension is large enough \citelist{\cite{Witsenhausen_1973}*{Proposition 2}\cite{Bolker_1969}*{(32)}}.

\begin{remark}
\label{remark_sigma}
If \(\sigma' : \borel{\targetspace'} \to [0, + \infty)\) is a measure such that for every Borel cone \(C \in \borel{\targetspace'}\), \(\sigma'(C) = \sigma(C)\), then for every \(v \in \targetspace\),
\begin{equation*} \int_{\targetspace'} \abs{\dualproduct{\eta}{v}} \dif \sigma'(\eta) = \int_{\targetspace'} \abs{\dualproduct{\eta}{v}} \dif \sigma(\eta)
\end{equation*}
and for every vector measure \(\mu : \Sigma \to \targetspace\),
\begin{equation*}
 \int_{\targetspace'} \variation{\reals}{\dualproduct{\eta}{\mu}} (A) \dif \sigma'(\eta)= \int_{\targetspace'} \variation{\reals}{\dualproduct{\eta}{\mu}} (A) \dif \sigma(\eta).
\end{equation*}
In particular, the measure \(\sigma\) satisfying the assumption of \cref{lemma_mass_representation} is not unique; a canonical representative can be given by requiring sigma to be an even measure supported on the unit sphere of \(\targetspace'\).
\end{remark}

The existence of zonal representation of the norm in \(\targetspace\)
automatically implies a similar representation formula for the mass.
It may be seen as an alternative way of representing the mass, without relying
on layerwise decompositions.

\begin{lemma}[Representation of the mass]
\label{lemma_mass_representation}
If \(\sigma:\borel{\targetspace'}\to\reals^+\) is a zonal representation of \(\norm{\targetspace}{\cdot}\),
then for every \(\sigma\)--algebra \(\Sigma\) on \(X\), every vector measure \(\mu : \Sigma \to \targetspace\) and every \(A \in \Sigma\),
\begin{equation*}
 \variation{\targetspace}{\mu}(A) = \int_{\targetspace'} \variation{\reals}{\dualproduct{\eta}{\mu}} (A) \dif \sigma(\eta).
\end{equation*}
\end{lemma}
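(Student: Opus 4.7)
The strategy is to express $\mu$ through a Radon--Nikodym derivative with respect to $\variation{\targetspace}{\mu}$, apply the zonal representation pointwise to that derivative, and swap the order of integration by Fubini--Tonelli.

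First, observe that $\variation{\targetspace}{\mu}$ is a positive finite measure on $(X,\Sigma)$ by \cref{proposition_variation_finite,proposition_total_variation_countable_additive}, and $\mu$ is absolutely continuous with respect to $\variation{\targetspace}{\mu}$ (applying Radon--Nikodym componentwise in any basis of $\targetspace$, together with \cref{proposition_range_linear}). Hence there exists a $\Sigma$--measurable function $\theta : X \to \targetspace$ such that
\[
  \mu(E) = \int_{E} \theta \dif \variation{\targetspace}{\mu}
  \qquad \text{for every } E \in \Sigma,
\]
and moreover $\norm{\targetspace}{\theta(x)} = 1$ for $\variation{\targetspace}{\mu}$--almost every $x \in X$ (this is the standard polar decomposition of a finite-dimensional vector measure).

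Second, for each linear form $\eta \in \targetspace'$, the signed measure $\dualproduct{\eta}{\mu}$ admits the real-valued density $\dualproduct{\eta}{\theta}$ with respect to $\variation{\targetspace}{\mu}$, so by the one-dimensional Hahn--Jordan decomposition (or equivalently by \cref{proposition_range_1d} combined with \cref{proposition_range_linear}),
\[
  \variation{\reals}{\dualproduct{\eta}{\mu}}(A)
  = \int_{A} \abs{\dualproduct{\eta}{\theta(x)}} \dif \variation{\targetspace}{\mu}(x).
\]

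Third, apply Fubini--Tonelli on the product $\targetspace' \times A$ endowed with $\sigma \otimes \variation{\targetspace}{\mu}$. The integrand $(\eta,x) \mapsto \abs{\dualproduct{\eta}{\theta(x)}}$ is jointly measurable (continuous in $\eta$ and $\Sigma$--measurable in $x$), and it is integrable thanks to the pointwise bound $\abs{\dualproduct{\eta}{\theta(x)}} \le \norm{\targetspace'}{\eta}\,\norm{\targetspace}{\theta(x)}$ together with the finiteness of both $\int_{\targetspace'} \norm{\targetspace'}{\eta} \dif \sigma(\eta)$ (which is built into \cref{def_zonal_representation}) and of $\variation{\targetspace}{\mu}(A)$. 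Swapping the order of integration and invoking \cref{def_zonal_representation} to recognise the inner integral as $\norm{\targetspace}{\theta(x)}$,
\[
  \int_{\targetspace'} \variation{\reals}{\dualproduct{\eta}{\mu}}(A) \dif \sigma(\eta)
  = \int_{A} \norm{\targetspace}{\theta(x)} \dif \variation{\targetspace}{\mu}(x)
  = \variation{\targetspace}{\mu}(A),
\]
where in the last equality we used that $\norm{\targetspace}{\theta} = 1$ $\variation{\targetspace}{\mu}$--a.e.

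There is no real obstacle in this argument; the only points that deserve some care are the existence of the density $\theta$ with unit norm $\variation{\targetspace}{\mu}$--almost everywhere (which is standard but holds in the seminormed setting only after replacing $\targetspace$ by the quotient $\targetspace/\{\norm{\targetspace}{\cdot}=0\}$, or equivalently by restricting attention to $\targetspace'$) and the invocation of Fubini--Tonelli, whose hypotheses are guaranteed by the integrability condition imposed in \cref{def_zonal_representation}.
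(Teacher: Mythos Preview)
Your proof is correct and takes a genuinely different route from the paper's. The paper argues each inequality separately and by hand: for \(\variation{\targetspace}{\mu}(A) \le \int_{\targetspace'} \variation{\reals}{\dualproduct{\eta}{\mu}}(A)\dif\sigma(\eta)\) it simply applies the zonal formula to each term \(\norm{\targetspace}{\mu(E_i)}\) of an arbitrary finite partition; for the reverse inequality it reduces (via \cref{remark_sigma}) to a compactly supported \(\sigma\), covers the support by finitely many \(\varepsilon\)--balls centred at \(\eta_1,\dotsc,\eta_\ell\), uses Hahn--Jordan on each \(\dualproduct{\eta_i}{\mu}\) to produce a single partition realising all the one-dimensional variations simultaneously, and controls the discretisation error by \(2\varepsilon\,\sigma(K)\,\variation{\targetspace}{\mu}(A)\). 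Your approach bypasses all of this partition bookkeeping by invoking the polar decomposition \(\mu=\theta\dif\variation{\targetspace}{\mu}\) and Fubini--Tonelli, which is shorter and more conceptual. The trade-off is that the paper's argument stays entirely within \cref{definition_mass} and never calls on Radon--Nikodym, whereas yours imports the polar decomposition and, as you correctly flag, must pass to the quotient by the kernel of the seminorm before \(\norm{\targetspace}{\theta}=1\) a.e.\ is even meaningful. Both arguments are sound; yours is the one most readers would anticipate.
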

\begin{proof}
If \(E_1, \dotsc, E_m \in \Sigma\) are disjoints subsets of \(A\), we have 
\begin{equation*}
  \sum_{i = 1}^m 
    \norm{\targetspace}{\mu (E_i)}
  =
    \int_{V'} \sum_{i = 1}^m \abs{\dualproduct{\eta}{\mu (E_i)}} \dif \sigma (\eta)
  \le \int_{V'} \variation{\reals}{\dualproduct{\eta}{\mu}}(A) \dif \sigma (\eta)
\end{equation*}
and thus by the definition of total variation of a vector measure (\cref{definition_mass})
\begin{equation}
\label{eq_aelohMa1ei}
  \variation{\targetspace}{\mu}(A)
  \le  \int_{V'} \variation{\reals}{\dualproduct{\eta}{\mu}}(A) \dif \sigma (\eta).
\end{equation}

Conversely, in view of \cref{remark_sigma} we can assume that the measure \(\sigma\) is supported in a compact set \(K \subset \targetspace' \setminus  \{0\}\). In particular \(\sigma\) is a finite measure.
Let \(\varepsilon > 0\), there exists \(\eta_1, \dotsc, \eta_m \in K\) such that 
\begin{equation*}
 K \subset \bigcup_{i = 1}^m \closure{B_{\targetspace'}(\eta_i, \varepsilon)}.
\end{equation*}
In particular, if \(\eta \in \closure{B_{\targetspace'}(\eta_i, \varepsilon)}\) and \(E \in \Sigma\), we have 
\begin{equation}
\label{eq_jikaiyah4I}
  \bigabs{ \variation{\reals}{\dualproduct{\eta}{\mu}}(E) - \variation{\reals}{\dualproduct{\eta_i}{\mu}}(E)} \le \varepsilon \variation{\targetspace}{\mu} (E).
\end{equation}
We decompose \(K = \bigcup_{i = 1}^\ell K_i\), with \(K_1, \dotsc, K_\ell \in \borel{\targetspace'}\) pairwise disjoint and \(K_i \subset \closure{B_{\targetspace'} (\eta_i, \varepsilon)}\).
We have then by \eqref{eq_jikaiyah4I}
\begin{equation}
\label{eq_be5Eir3buu}
 \int_{\targetspace'} \variation{\reals}{\dualproduct{\eta}{\mu}}(A) \dif \sigma (\eta)
 \le \sum_{i = 1}^d \sigma (K_i)\, \variation{\reals}{\dualproduct{\eta_i}{\mu}}(A) + \varepsilon \sigma (K) \variation{\targetspace}{\mu} (A).
\end{equation}
Since \(\dualproduct{\eta_i}{\mu}\) is a signed measure, by the Hahn--Jordan decomposition theorem and a common refinement, there exists a partition \(E_1, \dotsc, E_m \in \Sigma\) of \(A\) such that for each \(i \in \{1, \dotsc, \ell\}\),
\begin{equation}
\label{eq_Ohyoi4zeiN}
 \variation{\reals}{\dualproduct{\eta_i}{\mu}}(A)
 = \sum_{n = 1}^m \abs{\dualproduct{\eta_i}{\mu} (E_n)}. 
\end{equation}
We obtain thus by  \eqref{eq_jikaiyah4I} again combined with \eqref{eq_be5Eir3buu} and \eqref{eq_Ohyoi4zeiN} 
\begin{multline*}
 \int_{\targetspace'} \variation{\reals}{\dualproduct{\eta}{\mu}}(A) \dif \sigma (\eta)\\
 \le \sum_{n = 1}^m \int_{\targetspace'} \abs{\dualproduct{\eta}{\mu} (E_n)} \dif \sigma (\eta) + \varepsilon \sigma (K) \Bigl(\variation{\targetspace}{\mu}(A) + \sum_{n  =1}^m \variation{\targetspace}{\mu}(E_n) \Bigr).
\end{multline*}
By our assumption on the seminorm this implies in turn by our assumption and by additivity of the total mass (\cref{proposition_total_variation_countable_additive}) and by the definition of total variation  (\cref{definition_mass}) that 
\begin{equation*}
\begin{split}
 \int_{\targetspace'} \variation{\reals}{\dualproduct{\eta}{\mu}}(A) \dif \sigma (\eta)
 &\le \sum_{n = 1}^m \norm{\targetspace}{\mu (E_n)}
 + 2 \varepsilon \sigma (K) \variation{\targetspace}{\mu}(A)\\
 &\le \variation{\targetspace}{\mu} (A) + 2 \varepsilon \sigma (K) \variation{\targetspace}{\mu}(A).
\end{split}
\end{equation*}
We conclude by letting \(\varepsilon \to 0\) that 
\begin{equation}
\label{eq_Iesa2quahV}
 \int_{\targetspace'} \variation{\reals}{\dualproduct{\eta}{\mu}}(A) \dif \sigma (\eta)
 \le \variation{\targetspace}{\mu} (A).
\end{equation}
The conclusion then follows from \eqref{eq_aelohMa1ei} and \eqref{eq_Iesa2quahV}.
\end{proof}

\subsection{Application to monotonicity and convergence}

When \(\targetspace\) is equipped with an Euclidean norm
the monotony of the mass with respect to range inclusion,
\cref{mass_monotony} stated in the introduction,
is a direct consequence of the zonal representation, \cref{lemma_mass_representation}.

Because of the particular structure of two-dimensional
vector spaces, the proof we give also holds
for any norm on \(\targetspace\) whenever
\(\dim\targetspace=2\).

\begin{proof}[Proof of~\cref{mass_monotony}]
Given two vector measures \(\mu,\nu\) such that
	\begin{equation*} \range{\mu}\subseteq\range{\nu} ,\end{equation*}
we clear have, for all \(\eta\in\sphere\):
	\begin{equation*} \dualproduct{\eta}{\range{\mu}}
		\subseteq \dualproduct{\eta}{\range{\nu}} .\end{equation*}
Since the range commutes with linear maps,
we directly obtain, for all \(\eta\in\sphere\):
	\begin{equation*} \range{\dualproduct{\eta}{\mu}}
		\subseteq \range{\dualproduct{\eta}{\nu}} .\end{equation*}
Since the measures \(\dualproduct{\eta}{\mu}\)
and \(\dualproduct{\eta}{\nu}\) are real-valued, we obtain
	\begin{equation*} |\dualproduct{\eta}{\mu}|
		\leq |\dualproduct{\eta}{\nu}| ,\end{equation*}
for all \(\eta\in\sphere\).
Since \(V\) is Euclidean or \(\dim V = 2\), this implies,
by~\cref{lemma_mass_representation},
\(\variation{\targetspace}{\mu}(X)
		\leq \variation{\targetspace}{\nu}(X)\).
                If the norm is strictly convex, then the inequality is strict.
\end{proof}

Relying on the integral and perimeter formulae obtained above, we also propose an alternative proof of
\cref{theorem_Euclidean}.

We first draw a link between the support of a zonal representation and
strict convexity. The above characterization will turn out to be
particularly efficient in dimension~2.

\begin{definition}
  \label{def_zonal_representation_non-degenerate}
  A \emph{zonal representation} \(\sigma:\borel{\targetspace'}\to
  [0, +\infty]\) of a seminorm \(\norm{\targetspace}{\cdot}\)
  is \emph{non-degenerate} whenever for every non-empty double cone
  \(C \subset \targetspace\), 
  \(\sigma (C) > 0\).
\end{definition}

We recall that the set \(C \subset V'\) is an \emph{open cone} whenever for every \(\lambda \in \reals \setminus \{0\}\) 
and \(x \in \), \(\lambda C = C\).

In dimension~2, the non-degeneracy of the zonal representation is equivalent to strict convexity.
This will turn out to be false in higher dimensions.

\begin{proposition}
\label{proposition_strict_convexity_zonal}
Let \(V\) be a finite-dimensional space.
Assume that \(\sigma\) is a zonal representation of the seminorm \(\norm{\targetspace}{\cdot}\).
  The norm \(\norm{\targetspace}{\cdot}\) is strictly convex if and only if for every linearly independent vectors \(v, w \in\targetspace\), there exists \(\eta\in\support(\sigma)\) such that
 \(\dualproduct{\eta}{v}\) and \(\dualproduct{\eta}{w}\) do not have the same sign.\\ 
In particular, if \(\targetspace\) is a \(2\)-dimensional vector space, then
the norm is strictly convex if, and only if, \(\sigma\) is non-degenerate.
\end{proposition}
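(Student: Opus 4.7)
The plan is to exploit the zonal representation to rephrase both sides of the equivalence as pointwise statements about the integrand. Since \(\abs{a+b} \le \abs{a}+\abs{b}\) with equality if and only if \(ab \ge 0\), integrating against \(\sigma\) gives for every \(v, w \in \targetspace\)
\begin{equation*}
  \norm{\targetspace}{v+w} = \int_{\targetspace'}\abs{\dualproduct{\eta}{v+w}}\dif\sigma(\eta) \le \norm{\targetspace}{v}+\norm{\targetspace}{w},
\end{equation*}
with equality precisely when \(\dualproduct{\eta}{v}\cdot\dualproduct{\eta}{w} \ge 0\) for \(\sigma\)-almost-every \(\eta\). Setting
\begin{equation*}
  U_{v,w} \defeq \bigl\{\eta \in \targetspace' \st \dualproduct{\eta}{v}\cdot\dualproduct{\eta}{w} < 0\bigr\},
\end{equation*}
which is open in \(\targetspace'\) by continuity, the equality case amounts to \(\sigma(U_{v,w}) = 0\).

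For the main equivalence I would use that for any open set \(U \subseteq \targetspace'\), one has \(\sigma(U) > 0\) if and only if \(U \cap \support(\sigma) \ne \emptyset\). If the sign condition holds, then for every linearly independent \(v, w\) some \(\eta_0 \in \support(\sigma) \cap U_{v,w}\) exists, so \(\sigma(U_{v,w}) > 0\) and the triangle inequality is strict, giving strict convexity. Conversely, if the sign condition fails for some linearly independent \(v, w\), then \(U_{v,w}\) is open and disjoint from \(\support(\sigma)\), hence \(\sigma\)-null, so \(\norm{\targetspace}{v+w} = \norm{\targetspace}{v}+\norm{\targetspace}{w}\) and strict convexity is violated.

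For the two-dimensional specialization, \(U_{v,w}\) is always the union of two opposite open sectors of \(\targetspace'\) cut out by the lines \(\{\dualproduct{\eta}{v}=0\}\) and \(\{\dualproduct{\eta}{w}=0\}\), and is therefore a non-empty open double cone. Conversely, when \(\dim \targetspace = 2\), any non-empty open double cone \(C \subset \targetspace'\) contains such a \(U_{v,w}\): picking \(\eta_0 \in C\) and a small open ball around it contained in \(C\), the corresponding narrow pair of opposite sectors lies in \(C\) and equals \(U_{v,w}\) for linearly independent \(v, w\) annihilating its two bounding lines, with the sign of \(w\) flipped if necessary to pick out the correct pair of sectors. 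Combined with the characterization of \(\sigma\)-positivity for open sets, this shows the sign condition is equivalent to ``every non-empty open double cone in \(\targetspace'\) meets \(\support(\sigma)\)'', which is precisely non-degeneracy of \(\sigma\).

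The main (mild) obstacle is this 2D geometric realization step; in higher dimensions \(U_{v,w}\) is always bounded by exactly two hyperplanes through the origin and cannot realize an arbitrary open double cone, which is precisely why the ``in particular'' part of the statement is confined to \(\dim \targetspace = 2\).
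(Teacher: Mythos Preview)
Your proposal is correct and follows essentially the same approach as the paper: both reduce strict convexity to the pointwise equality case in the scalar triangle inequality, integrate against \(\sigma\), and then in dimension~2 identify the ``opposite-sign'' region with an arbitrary thin open double cone. Your version is somewhat more explicit (introducing \(U_{v,w}\) and invoking the open-set characterization of the support), but the underlying argument is the same.
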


\begin{proof}[Proof of \cref{proposition_strict_convexity_zonal}]
We observe that for every \(\eta \in \targetspace'\) and \(v \in \targetspace\), we have
\begin{equation*}
 \bigabs{\dualproduct{\eta}{v}  + \dualproduct{\eta}{w}}
  \ge \bigabs{\dualproduct{\eta}{v}}  + \bigabs{\dualproduct{\eta}{w}},
\end{equation*}
with equality if and only if the numbers \(\dualproduct{\eta}{v}\) and \(\dualproduct{\eta}{w}\) have the same sign.
We have 
\begin{equation*}
\begin{split}
 \norm{\targetspace}{v + w}
 &= \int_{\targetspace'} \bigabs{ \dualproduct{\eta}{v}  + \dualproduct{\eta}{w}} \dif\sigma(\eta)\\
 &\ge \int_{\targetspace'}\bigabs{\dualproduct{\eta}{v}} \dif\sigma(\eta)
+  \int_{\targetspace'}\bigabs{\dualproduct{\eta}{w}} \dif\sigma(\eta) = \norm{\targetspace}{v}  + \norm{\targetspace}{w},
\end{split}
\end{equation*}
with equality if and only if \(\dualproduct{\eta}{v}\) and \(\dualproduct{\eta}{w}\) have the same sign for every \(\eta \in \operatorname{supp} \sigma\), since the map \(z \in V' \mapsto \dualproduct{\eta}{z}\) is continuous.

In particular, if \(\sigma\) is a non-degenerate zonal representation, then the norm \(\norm{\targetspace}{\cdot}\) is strictly convex.
In the two-dimensional case \(\dim \targetspace = 2\), the separation condition of \cref{proposition_strict_convexity_zonal} on the support conversely
implies the density condition on the support. 
Indeed, for every \(\eta \in \targetspace'\) and every \(\delta > 0\), there exists \(v, w \in \targetspace\) such that if \(\eta'\in \targetspace'\) and \(\dualproduct{\eta'}{v} \le 0 \le \dualproduct{\eta'}{w}\), then \(\norm{\targetspace'}{\eta - \eta'} \le \delta\).
\end{proof}

Density of the support for a zonal representation of a strictly
convex norm in higher dimensions \(\dim\targetspace\geq 3\) fails
in general. As an example, one can consider \(\targetspace=\reals^3\)
and the measure
  \( \sigma = \sigma_{x,y} + \sigma_{x,z} + \sigma_{z,y} \),
where \(\sigma_{x,y}\) is the uniform measure on the circle orthogonal to the \(y\)-axis; and similarly for \(\sigma_{x,z}\) and \(\sigma_{z,y}\).
Then the support \(\support(\sigma)\) is far from being dense in \(\sphere\),
but the induced norm
  \begin{equation*} \int_{\sphere} \bigabs{\dualproduct{\eta}{v}} \dif\sigma(\eta) = \sqrt{{v_1}^2+{v_2}^2}
			+ \sqrt{{v_1}^2+{v_3}^2}
			+ \sqrt{{v_2}^2+{v_3}^2} .
  \end{equation*}
is strictly convex.

\bigskip

We know give another proof of~\cref{theorem_Euclidean} by mean of
zonal integral representation. We restrict ourselves to the case of Euclidean
norms in the case \(\dim\targetspace\geq 3\), and strictly convex norms in
the case \(\dim\targetspace=2\).

\begin{proof}[Second proof of~\cref{theorem_Euclidean}, by zonal representation]
We shall only need to prove that mass convergence implies convergence of the range,
as the converse statement holds in all generality.

For every \(\eta \in \targetspace'\), the sequence \((\dualproduct{\eta}{\mu_n})_{n \in \integers}\) converges widely to \((\dualproduct{\eta}{\mu})\) and by \cref{proposition_variation_wlsc}, we have 
\begin{equation}
\label{eq_WaNguuce8o}
  \liminf_{n \to \infty} 
    \variation{\reals}{\dualproduct{\eta}{\mu_n}} (X)
  \ge  \variation{\reals}{\dualproduct{\eta}{\mu}} (X).
\end{equation}
Since \(\norm{\targetspace}{\cdot}\) is a Euclidean norm, there exists a constant \(\kappa > 0\) such that for every \(v \in \targetspace\), 
\begin{equation}
\label{eq_zonal_euclidean}
    \norm{\targetspace}{v}
  =
    \kappa \int_{\sphere}
      \abs{\dualproduct{\eta}{v}}
      \dif \eta
\end{equation}
where \(\sphere\) is the unit sphere in \(\targetspace \simeq \targetspace'\).
By \cref{lemma_mass_representation} and by our assumption, we have 
\begin{equation}
\label{eq_quie4ohg6J}
  \limsup_{n \to \infty} \int_{\sphere}
      \variation{\reals}{\dualproduct{\eta}{\mu_n}} (X)
      \dif \eta
      \le  \int_{\sphere}
      \variation{\reals}{\dualproduct{\eta}{\mu}} (X)
      \dif \eta.
\end{equation}
For every \(\eta_1, \eta_2 \in \targetspace'\) and \(n \in \integers\), we have
\begin{equation}
\label{eq_Ohr8giaGhe}
  \bigabs{
    \variation{\reals}{\dualproduct{\eta_2}{\mu_n}}(X)
    -
    \variation{\reals}{\dualproduct{\eta_1}{\mu_n}}(X)
  }
  \le 
   \norm{\targetspace'}{\eta_2 - \eta_1}
   \variation{\targetspace}{\mu_n} (X).
\end{equation}

By \eqref{eq_WaNguuce8o}, \eqref{eq_quie4ohg6J} and \eqref{eq_Ohr8giaGhe}, we deduce that for every \(\eta \in \mathbb{S}_{\targetspace'}\), we have 
\begin{equation}
  \lim_{n \to \infty} 
    \variation{\reals}{\dualproduct{\eta}{\mu_n}} (X)
  = \variation{\reals}{\dualproduct{\eta}{\mu}} (X).
\end{equation}
By homogeneity, this still holds for every \(\eta \in \targetspace'\) and the conclusion follows then from \cref{lemma_pointwise_to_hausdorff}.
\end{proof}

\section{Perimeter representation of masses}

\subsection{Perimeter formula}

So far, we have used zonal representation of norms in spaces of dimension~2
to approach the problem of mass convergence from a geometric point of view.
In particular, we were able to characterize strict convexity of the norm
through density of the support for the measure representing the norm.

We now give further geometric insight of the problem, by interpreting the
total mass of a vector valued measure as the perimeter of its range
(up to some multiplicative constant).

We first define the perimeter of a polygon in a two-dimensional space.

\begin{definition}
If \(\dim \targetspace = 2\) and \(P \subset V\)
if \(P\) is a polygon of vertices \(v_1, \dotsc, v_\ell\), we define its perimeter
as 
\begin{equation*}
  \operatorname{Per}_\targetspace(P)
  \defeq 
  \sum_{i = 1}^{\ell} \norm{\targetspace}{v_{\ell} - v_{\ell - 1}},
\end{equation*}
with the convention that \(v_0 = v_\ell\).
\end{definition}

This notion of perimeter is analogous to the notion of perimeter in crystalline variational problems \citelist{\cite{Taylor_1974}\cite{Dinghas_1944}\cite{Wulff_1901}\cite{Figalli_Maggi_Pratelli_2010}}.

The perimeter of convex polygons is monotone with respect to set inclusion.

\begin{lemma}
\label{proposition_perimeter_monotone_polygon}
If \(\dim \targetspace = 2\) and \(P_1 \subseteq P_2 \subset V\) are convex polygons, then 
\begin{equation*}
  \operatorname{Per}_\targetspace(P_1) \le \operatorname{Per}_\targetspace(P_2). 
\end{equation*}
\end{lemma}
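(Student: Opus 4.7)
The plan is to reduce the perimeter monotonicity to the manifest monotonicity of widths, using the zonal representation of the norm that is available in dimension two.

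First, I would invoke the fact (recorded in the paper after \cref{def_zonal_representation}) that every seminorm on a two-dimensional space admits a zonal representation: a non-negative Borel measure \(\sigma\) on \(\targetspace'\) such that \(\norm{\targetspace}{v} = \int_{\targetspace'} \abs{\dualproduct{\eta}{v}}\dif\sigma(\eta)\) for every \(v \in \targetspace\). For the degenerate seminorm or the one with one-dimensional kernel, the representation reduces respectively to \(\sigma = 0\) or to a Dirac mass, so the argument goes through uniformly.

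Second, I would establish a Cauchy-type identity for the polygonal perimeter: for a convex polygon \(P\) with cyclically ordered vertices \(v_0, \dots, v_{\ell-1}\) (with the convention \(v_\ell = v_0\)) and any \(\eta \in \targetspace'\),
\[
\sum_{i=1}^{\ell} \abs{\dualproduct{\eta}{v_i - v_{i-1}}} = 2\, w_\eta(P),
\]
where \(w_\eta(P) \defeq \max_{v \in P}\dualproduct{\eta}{v} - \min_{v\in P}\dualproduct{\eta}{v}\) denotes the width of \(P\) in direction \(\eta\). The geometric point is that, by convexity of \(P\), the cyclic sequence \(i \mapsto \dualproduct{\eta}{v_i}\) is unimodal: it increases monotonically along one arc joining the index of the minimum to the index of the maximum, and decreases monotonically along the complementary arc. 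Summing the absolute increments along each arc produces \(w_\eta(P)\).

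Third, combining this identity with the zonal representation and Fubini's theorem gives the perimeter formula
\[
\operatorname{Per}_{\targetspace}(P) = 2\int_{\targetspace'} w_\eta(P)\dif\sigma(\eta).
\]
Since \(P_1 \subseteq P_2\) trivially implies \(w_\eta(P_1) \le w_\eta(P_2)\) for every \(\eta \in \targetspace'\), integration against the non-negative measure \(\sigma\) yields the desired inequality.

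The main obstacle is the unimodality step: one must verify cleanly that, along the cyclic ordering of a convex polygon's vertices, the scalar \(\dualproduct{\eta}{v_i}\) attains its extrema on consecutive blocks of indices separating two monotone arcs. This is essentially the fact that the outer normal to a convex polygon rotates monotonically along its boundary, so that the sign of the increment \(\dualproduct{\eta}{v_i - v_{i-1}}\) changes at most twice around the cycle. Once granted, the remaining integration steps are routine.
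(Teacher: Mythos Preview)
Your argument is correct, but it takes a genuinely different route from the paper. The paper gives the classical elementary proof: for each edge of \(P_1\), take the supporting half-plane \(H\) and replace \(P_2\) by \(H \cap P_2\); this swaps an arc of \(\partial P_2\) for the chord, and the triangle inequality for the seminorm shows the perimeter does not increase. After iterating over all edges of \(P_1\) one has cut \(P_2\) down to \(P_1\), and the chain of inequalities gives the result. This uses nothing beyond the definition of the seminorm.

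Your approach instead imports the zonal representation (a structural fact specific to two-dimensional seminorms) and establishes the Cauchy--Crofton-type identity \(\operatorname{Per}_{\targetspace}(P) = 2\int_{\targetspace'} w_\eta(P)\dif\sigma(\eta)\) directly for polygons, then reads off monotonicity from the trivial monotonicity of the width \(w_\eta\). The unimodality step is sound: on a convex polygon the edge directions rotate monotonically, so the sign of \(\dualproduct{\eta}{v_i - v_{i-1}}\) changes at most twice around the cycle, and the sum of absolute increments telescopes to \(2w_\eta(P)\). What you gain is an explicit integral formula for the perimeter as a by-product---indeed the paper later records essentially this formula, but only \emph{after} \cref{theorem_perimeter_computation}, deriving it from the mass representation rather than proving it directly. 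What the paper's proof buys is self-containment: it needs no external input about zonoids or zonal measures, only the triangle inequality, and so it places the lemma logically prior to the zonal machinery rather than dependent on it.
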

\begin{proof}
Let \(H \subset \targetspace\) be a half-plane such that \(P_1 \subset H\) and \(\partial H\) contains one side of \(\partial P_1\). Then by the triangle inequality for the seminorm \(\operatorname{Per}_\targetspace (H \cap P_2) \le \operatorname{Per}_\targetspace (H)\).
By repeating this process on each edge of the polygon \(P_1\), we reach the conclusion.
\end{proof}

\Cref{proposition_perimeter_monotone_polygon} suggests to extend the definition of perimeter to convex sets as the least upper bound of the perimeter of polygonal subsets.

\begin{definition}
\label{definition_perimeter_convex}
If \(C \subset V\) is convex, we set
\begin{equation*} 
  \operatorname{Per}_\targetspace(C) \defeq \sup\,\bigl\{\operatorname{Per}_\targetspace (P) :  P \subseteq C\text{ is a convex polygon} \Bigr\} .
\end{equation*} 
\end{definition}

In view of \cref{proposition_perimeter_monotone_polygon}, \cref{proposition_perimeter_monotone_polygon} and \cref{definition_perimeter_convex} coincide for polygons.

The monotonicity property of \cref{proposition_perimeter_monotone_polygon} can be transferred to arbitrary subsets and the equality cases can be characterized for strictly convex norms.

\begin{proposition}\label{PerimeterMonotony}
If  \(C_1\subseteq C_2\subseteq \targetspace\) are compact convex sets, then 
\begin{equation*}
\operatorname{Per}_\targetspace(C_1)\leq\operatorname{Per}_\targetspace(C_2), 
\end{equation*}
If \(\norm{\targetspace}{\cdot}\) is a strictly convex norm and if equality holds, then \(C_1=C_2\).
\end{proposition}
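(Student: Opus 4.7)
The plan is to reduce the statement to an integral representation of the perimeter built from a zonal representation $\sigma$ of the norm, which exists for every seminorm in dimension~$2$ by the discussion around Lemma~\ref{lemma_mass_representation}.

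First I would compute the perimeter of a convex polygon in terms of $\sigma$. For a convex polygon $P \subset \targetspace$ with vertices $v_1, \dotsc, v_\ell$ in cyclic order and any $\eta \in \targetspace'$, the cyclic sequence $\bigl(\dualproduct{\eta}{v_i}\bigr)_i$ ascends once to $h_P(\eta) \defeq \max_{y \in P}\dualproduct{\eta}{y}$ and descends once to $-h_P(-\eta)$, by convexity of $P$ and linearity of $\dualproduct{\eta}{\cdot}$. Hence
\begin{equation*}
  \sum_{i=1}^\ell \bigabs{\dualproduct{\eta}{v_{i+1} - v_i}}
  = 2\bigl(h_P(\eta) + h_P(-\eta)\bigr).
\end{equation*}
Invoking the zonal representation on each edge length and exchanging sum and integral via Fubini then gives
\begin{equation*}
  \operatorname{Per}_{\targetspace}(P)
  = 2\int_{\targetspace'} \bigl(h_P(\eta) + h_P(-\eta)\bigr) \dif\sigma(\eta).
\end{equation*}

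Next I would approximate an arbitrary compact convex set $C \subset \targetspace$ from within by an increasing sequence of inscribed convex polygons $(P_n)$, for instance convex hulls of finite sets that become dense in $C$, so that $h_{P_n}$ increases pointwise to $h_C$. The definition of $\operatorname{Per}_{\targetspace}(C)$ as a supremum and the monotone convergence theorem (with the integrable dominating function $\eta \mapsto 2\norm{\targetspace'}{\eta}\,\diam(C)$, integrable by \cref{def_zonal_representation}) then yield
\begin{equation*}
  \operatorname{Per}_{\targetspace}(C)
  = 2\int_{\targetspace'} \bigl(h_C(\eta) + h_C(-\eta)\bigr) \dif\sigma(\eta).
\end{equation*}
From this representation, monotonicity is immediate: $C_1 \subseteq C_2$ implies $h_{C_1} \le h_{C_2}$ pointwise on $\targetspace'$, and integration gives $\operatorname{Per}_{\targetspace}(C_1) \le \operatorname{Per}_{\targetspace}(C_2)$.

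For the equality case, assume $\norm{\targetspace}{\cdot}$ is strictly convex and $\operatorname{Per}_{\targetspace}(C_1) = \operatorname{Per}_{\targetspace}(C_2)$. The continuous non-negative integrand $(h_{C_2} - h_{C_1})(\eta) + (h_{C_2} - h_{C_1})(-\eta)$ then vanishes $\sigma$-almost everywhere, and by continuity on all of $\support(\sigma)$; since each of its summands is non-negative, the identity $h_{C_1}(\eta) = h_{C_2}(\eta)$ holds throughout $\support(\sigma) \cup (-\support(\sigma))$. By \cref{proposition_strict_convexity_zonal}, strict convexity forces this set to be dense in $\targetspace'$, so continuity of the support functions yields $h_{C_1} \equiv h_{C_2}$, whence $C_1 = C_2$.

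The main obstacle is the first step: establishing the integral formula for polygons and extending it to arbitrary compact convex sets. The polygonal identity follows from the cyclic ``up-then-down'' monotonicity of $(\dualproduct{\eta}{v_i})_i$, whereas the extension to $C$ requires an inscribed polygonal approximation whose support functions increase pointwise to $h_C$, together with the integrability built into the definition of a zonal representation to justify passing to the limit.
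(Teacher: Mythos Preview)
Your argument is correct, but it follows a route quite different from the paper's. The paper handles the inequality in one line: by \cref{proposition_perimeter_monotone_polygon} the perimeter is monotone on convex polygons, and \cref{definition_perimeter_convex} defines $\operatorname{Per}_\targetspace(C)$ as a supremum over inscribed polygons, so $C_1\subseteq C_2$ immediately gives $\operatorname{Per}_\targetspace(C_1)\le\operatorname{Per}_\targetspace(C_2)$. For the equality case the paper argues directly: if $v\in C_2\setminus C_1$, strict convexity makes the ``detour through $v$'' strictly longer than any chord of $C_1$, and compactness of $C_1$ gives a uniform gain $\eta>0$; adjoining $v$ to any inscribed polygon $P_1\subseteq C_1$ then produces $P_2\subseteq C_2$ with $\operatorname{Per}_\targetspace(P_2)\ge\operatorname{Per}_\targetspace(P_1)+\eta$, forcing strict inequality of the suprema.

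Your approach instead establishes the Cauchy--Crofton-type identity
\[
\operatorname{Per}_\targetspace(C)=2\int_{\targetspace'}\bigl(h_C(\eta)+h_C(-\eta)\bigr)\dif\sigma(\eta)
\]
from the zonal representation of the seminorm, and reads off both monotonicity and the equality case from it. This is a genuinely different and self-contained derivation of the formula the paper records only \emph{after} \cref{theorem_perimeter_computation}; it buys a cleaner conceptual picture and ties the strict-monotonicity directly to the non-degeneracy of $\sigma$ via \cref{proposition_strict_convexity_zonal}. The paper's argument, by contrast, is entirely elementary and needs no integral representation. One small imprecision in your write-up: ``$\support(\sigma)\cup(-\support(\sigma))$ is dense in $\targetspace'$'' is not literally what non-degeneracy gives (the support may sit on a sphere); what you actually use is that the \emph{rays} through $\support(\sigma)$ are dense, which suffices since $h_{C_1}-h_{C_2}$ is continuous and positively $1$-homogeneous.
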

\begin{proof}
The inequality follows from \cref{proposition_perimeter_monotone_polygon} and \cref{definition_perimeter_convex}.

For the equality case, we assume by contradiction that \(C_1 \subsetneq C_2\)
There exists \(v \in C_2 \setminus C_1\). 
Let \(H \subset V\) be a half-plane such that \(v \not \in H\) and \(C_1 \subset H\). In particular, for every \(w_1, w_2 \in C_1\), we have since \(\norm{\targetspace}{\cdot}\) is a strictly convex norm,
\begin{equation*}
 \norm{\targetspace}{w_2 - w_1}
 < \norm{\targetspace}{w_2 - v} + \norm{\targetspace}{v - w_1}.
\end{equation*}
We define 
\begin{equation*}
 \eta 
 \defeq
 \inf \{ \norm{\targetspace}{w_2 - v} + \norm{\targetspace}{v - w_1} - \norm{\targetspace}{w_2 - w_1} \st w_1, w_2 \in C_1\}.
\end{equation*}
Since the set \(C_1\) is compact and convex, we have \(\eta > 0\). If \(P_1 \subseteq C_1\) is a polygon, then the polygon \(P_2\) defined as the convex hull of \(P_1\) and \(v\) satisfies \(P_2 \subseteq C_2\) and 
\begin{equation*}
 \operatorname{Per}_{\targetspace} (P_2)
 \ge \operatorname{Per}_{\targetspace} (P_1) + \eta.
\end{equation*}
By taking the supremum, it follows from \cref{definition_perimeter_convex} that 
\begin{equation*}
 \operatorname{Per}_{\targetspace} (C_2)
 \ge \operatorname{Per}_{\targetspace} (C_1) + \eta
 > \operatorname{Per}_{\targetspace} (C_1).
 \qedhere
\end{equation*}
\end{proof}

We establish \cref{theorem_perimeter_computation} which states that 
\begin{equation*} 
  \mass{\targetspace}{\mu} = \tfrac{1}{2}\operatorname{Per} \bigl(\range{\mu}\bigr) .
\end{equation*}
This formula together with \cref{lemma_mass_representation}
yield a Cauchy--Crofton-like integral formula for the perimeter: 
for any compact convex set \(C\subseteq\targetspace\):
\begin{equation}
  \operatorname{Per}_\targetspace(C) = 2 \int_{\sphere} \diam \dualproduct{\eta}{C}\,\dif\sigma(\eta) . 
\end{equation}

The proof of \cref{theorem_perimeter_computation} is based on the following property of polygons.

\begin{lemma}
\label{lemma_perimeter_polygon}
Let \(V\) be a two-dimensional vector space and \(\norm{\targetspace}{\cdot}\) be a seminorm on \(V\).
For every \(m \in \integers\) and \(v_1, \dotsc, v_m \in \targetspace\), one has 
\begin{equation*}
    \sum_{n = 1}^m \norm{\targetspace}{v_n} 
 = 	
    \tfrac{1}{2} \operatorname{Per}_{\targetspace} \bigl( [0, v_1] + \dotsb + [0, v_m]\bigr).
\end{equation*}
\end{lemma}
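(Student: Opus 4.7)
The plan is to proceed by induction on \(m\), reducing the identity to a single planar geometric claim: for every compact convex polygon \(P \subset V\) and every \(v \in V\),
\begin{equation*}
  \operatorname{Per}_V(P + [0, v]) = \operatorname{Per}_V(P) + 2\,|v|_V.
\end{equation*}
The base case \(m = 1\) is immediate from the definition of polygonal perimeter: the degenerate polygon \([0, v_1]\) has two vertices \(0\) and \(v_1\), so its perimeter is \(|v_1 - 0|_V + |0 - v_1|_V = 2\,|v_1|_V\), which halves to \(|v_1|_V\). For the inductive step, set \(Z_m \defeq [0, v_1] + \dotsb + [0, v_m]\); it is a compact convex polygon since Minkowski sums of convex polygons are convex polygons. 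Applying the auxiliary claim with \(P = Z_m\) and \(v = v_{m+1}\), then combining with the induction hypothesis and halving, delivers the formula for \(m + 1\) summands.

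The core of the proof is therefore the auxiliary claim, which I would establish by a direct boundary analysis in the plane. When \(v = 0\) there is nothing to show; otherwise \(P + [0, v]\) is the sweep of \(P\) along the segment \([0, v]\). Its boundary decomposes into four pieces: a \emph{back arc}, consisting of the edges of \(\partial P\) whose outward normal \(\eta\) satisfies \(\dualproduct{\eta}{v} \le 0\); a \emph{front arc}, obtained by translating by \(v\) the complementary part of \(\partial P\) (where \(\dualproduct{\eta}{v} \ge 0\)); and two \emph{connecting segments} parallel to \(v\), each of \(V\)-length \(|v|_V\), joining the two arcs at the two extremal points of \(P\) in a direction transverse to \(v\). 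Since translation preserves \(V\)-lengths and the back and front arcs together recover \(\partial P\), summing the \(V\)-lengths of the four pieces produces exactly \(\operatorname{Per}_V(P) + 2\,|v|_V\).

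The main technical point to handle with care is the degenerate configuration in which \(\partial P\) already carries an edge parallel to \(v\): such an edge belongs simultaneously to the back and front arcs, and the corresponding connecting segment merges with it rather than producing a new side, so the edge is simply extended in length by \(|v|_V\). A short case analysis (edges of \(P\) parallel to \(v\) at both extremal sides, at only one, or at neither) confirms that the perimeter still increases by precisely \(2\,|v|_V\) in every configuration. That \(|\cdot|_V\) may be only a seminorm rather than a norm is harmless, as both sides of the target identity are linear in the values \(|v_i|_V\).
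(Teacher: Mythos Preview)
Your proposal is correct and follows the same inductive route as the paper: both reduce the lemma to the observation that \(\operatorname{Per}_{\targetspace}\bigl(Z_m + [0,v_{m+1}]\bigr) = \operatorname{Per}_{\targetspace}(Z_m) + 2\,\norm{\targetspace}{v_{m+1}}\). The paper simply asserts this identity without justification, whereas you supply the boundary decomposition (back arc, front arc, two connecting segments) that proves it---so your argument is strictly more detailed than the paper's, but not different in strategy.
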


\Cref{lemma_perimeter_polygon} implies in particular immediately that \cref{theorem_perimeter_computation} holds when \(\mu\) has finite support.

\begin{proof}[Proof of \cref{lemma_perimeter_polygon}]
We proceed by induction. The identity is trivial if \(m = 0\). The induction step follows from the observation that 
\begin{equation*}
 \tfrac{1}{2} \operatorname{Per}_{\targetspace} \bigl( [0, v_1] + \dotsb + [0, v_{m + 1}]\bigr)
 =
  \tfrac{1}{2} \operatorname{Per}_{\targetspace} \bigl( [0, v_1] + \dotsb + [0, v_{m}]\bigr)
  + \norm{\targetspace}{v_{m + 1}}.
  \qedhere
\end{equation*}
\end{proof}

\begin{proof}[Proof of \cref{theorem_perimeter_computation}]
If \(E_1, \dotsc, E_m \in \Sigma\) are disjoint subsets of \(A\), then we have 
\begin{equation*}
  \sum_{n = 1}^m \norm{\targetspace}{\mu (E_n)}
  = \tfrac{1}{2} \operatorname{Per}_{\targetspace} \bigl( [0, \mu (E_1)] + \dotsb + [0, \mu (E_n)]\bigr).
\end{equation*}
Since \([0, \mu (E_1)] + \dotsb + [0, \mu (E_n)] \subset \range[A]{\mu}\), it follows from the definition of total variation (\cref{definition_mass}) that 
\begin{equation}
\label{eq_coosheL3ee}
 \variation{\targetspace}{\mu}(A)
 \le \tfrac{1}{2} \operatorname{Per}_{\targetspace} (\range[A]{\mu}).
\end{equation}

Conversely, if \(P \subset \range[A]{\mu}\) is a polygon, there exist sets \(F_1, \dotsc, F_k \in \Sigma\) such that \(F_1, \dotsc, F_k \subseteq A\),
\(\mu (F_1), \dotsc, \mu (F_k)\) are the vertices of \(P\). Hence there exists disjoint sets \(E_1, \dotsc, E_m \in \Sigma\) such that \(P \subseteq 
[0, \mu (E_1)] + \dotsb + [0, \mu (E_m)]\).
It follows thus by \cref{definition_perimeter_convex} that 
\begin{equation}
 \tfrac{1}{2} \operatorname{Per}_{\targetspace} (P)
  \le 
    \tfrac{1}{2} \operatorname{Per}_{\targetspace} \bigl( [0, \mu (E_1)] + \dotsb + [0, \mu (E_m)]\bigr)
 =
  \sum_{n = 1}^m \norm{\targetspace}{\mu (E_n)} (A).
\end{equation}
Hence by definition of total variation (\cref{definition_mass}),
\begin{equation*}
\tfrac{1}{2} \operatorname{Per}_{\targetspace} (P)
\le \variation{\targetspace}{\mu}(A)
\end{equation*}
and thus by definition of perimeter (\cref{definition_perimeter_convex})
\begin{equation}
\label{eq_ooph1ZahZi}
    \tfrac{1}{2} \operatorname{Per}_{\targetspace} (\range[A]{\mu})
  \le 
    \variation{\targetspace}{\mu}(A).
\end{equation}

The conclusion comes from the combination of \eqref{eq_coosheL3ee} and \eqref{eq_ooph1ZahZi}.
\end{proof}

\subsection{Application to monotonicity and convergence}
\Cref{theorem_perimeter_computation} gives a proof of the monotonicity of the range with respect to the mass.

\begin{proof}[Second proof of \cref{mass_monotony} when \(\dim V = 2\)]
  We apply \cref{theorem_perimeter_computation} and \cref{PerimeterMonotony}.
\end{proof}

As a final result, we give a proof of~\cref{theorem_Euclidean}
in the two-dimensional case, based on the perimeter formula,
\cref{theorem_perimeter_computation}.

\begin{proof}[Third proof of~\cref{theorem_Euclidean} by mean of perimeter formula]
If the set \(C \subset V\) is compact and convex, we define the function \(\gamma_C : \targetspace \to [0, + \infty)\) by 
\begin{equation}
\label{eq_eiGuul3Aip}
  \gamma_C (v) 
  \defeq
  \operatorname{Per}_{\targetspace} \bigl(\chull{(C + \{v\})}\bigr).
\end{equation}
We observe that, by the triangle inequality and the definitions, for every \(v_1, v_2 \in \targetspace\),\begin{equation}
\label{eq_euT7phu9gu}
\abs{\gamma_C (v_2) - \gamma_C (v_1)}
\le 2 \norm{\targetspace}{v_2 - v_1}
\end{equation}
and that if \(0 \in C\),
\begin{equation}
\label{eq_zohB1veec7}
 \norm{\targetspace}{v} \le \gamma_C (v).
\end{equation}

Let \(\varepsilon > 0\). 
By the Weierstrass extreme value theorem and, since \(\norm{\targetspace}{\cdot}\) is a strictly convex norm, by \cref{PerimeterMonotony} we have 
\begin{equation*}
 \delta \defeq \inf \{\gamma_{\range{\mu}} (v) \st \operatorname{dist} (v, \range{\mu}) \ge \varepsilon \}
 - \operatorname{Per}_{\targetspace} (\range{\mu})
 > 0.
\end{equation*}
By \eqref{eq_eiGuul3Aip} and \cref{definition_perimeter_convex} for every \(v \in \targetspace\), there exists a polygon \(P_v \subset \range{\mu}\) such that 
\begin{equation*}
 \gamma_{P_v} (v) \ge \gamma_{\range{\mu}} (v) - \frac{\delta}{4}.
\end{equation*}
Thus if \(\norm{\targetspace}{v' - v} \le \frac{\delta}{16}\), we have by \eqref{eq_euT7phu9gu}
\begin{equation*}
 \gamma_{P_v} (v') \ge \gamma_{\range{\mu}} (v') - \frac{\delta}{2}
\end{equation*}
and thus if \(\gamma_{P_v} (v') \le \operatorname{Per}_\targetspace (\range \mu) + \frac{\delta}{2}\), then \(v' \in \range{\mu} + \closure{B (0, \varepsilon)}\).
In view of \eqref{eq_zohB1veec7}, by a compactness argument, there exists a polygon \(P \subseteq \range{\mu}\) such that if \(\gamma_{P} (v) \le \operatorname{Per}_\targetspace (\range \mu) + \frac{\delta}{2}\), then \(v \in \range{\mu} + \closure{B (0, \varepsilon)}\).
  By \cref{proposition_wlsc_range} and \cref{definition_Kuratowski_lower_limit}, there exists a sequence of polygons \((P_n)_{n \in \integers}\) whose vertices converge to those of \(P\) and such that  for every \(n \in \integers\), \(P_n \subset \range{\mu_n}\). 
For every \(v \in \targetspace\), we have \(\lim_{n \to \infty} \gamma_{P_n} (v) = \gamma_{P} (v)\), and thus in view of \eqref{eq_euT7phu9gu}, for \(n \in \integers\) large enough, if \(v \in \targetspace\) satisfies \(\gamma_{P_n} (v) \le \operatorname{Per}_\targetspace (\range \mu) + \frac{\delta}{4}\), then \(v \in \range{\mu} + \closure{B (0, \varepsilon)}\).

Assume now that \(v \in \range{\mu_n}\). One has then \(\chull{P_n + \{v\}} \subset \range{\mu_n}\), and thus if \(n \in \integers\) is large enough, we have  
\begin{equation*}
 \gamma_{P_n} (v) \le \operatorname{Per}_{\targetspace} (\mu_n)
 \le \operatorname{Per}_{\targetspace} (\mu) + \frac{\delta}{4}.
\end{equation*}
By construction of \(P_n\), this implies that \(v \in \range{\mu} + \closure{B (0, \varepsilon)}\) and thus 
\begin{equation}
\label{eq_hee1aib2Be}
 \range{\mu_n} \subseteq \range{\mu} +\closure{B (0, \varepsilon)}.
\end{equation}

The conclusion then follows from \eqref{eq_hee1aib2Be} and \cref{proposition_wlsc_range}.
\end{proof}

\section*{Acknowledgement}

The authors thank Jan Kristensen for bringing to their attention the references \citelist{\cite{Delladio}\cite{Reshetnyak}}.

\begin{bibdiv}

\begin{biblist}

\bib{Ambrosio_Fusco_Pallara_2000}{book}{
   author={Ambrosio, Luigi},
   author={Fusco, Nicola},
   author={Pallara, Diego},
   title={Functions of bounded variation and free discontinuity problems},
   series={Oxford Mathematical Monographs},
   publisher={The Clarendon Press, Oxford University Press}, 
   address={New York},
   date={2000},
   pages={xviii+434},
   isbn={0-19-850245-1},
}
    
\bib{Bolker_1969}{article}{
   author={Bolker, Ethan D.},
   title={A class of convex bodies},
   journal={Trans. Amer. Math. Soc.},
   volume={145},
   date={1969},
   pages={323--345},
   issn={0002-9947},
   doi={10.2307/1995073},
}

\bib{Brezis}{book}{
   author={Brezis, Haim},
   title={Functional analysis, Sobolev spaces and partial differential
   equations},
   series={Universitext},
   publisher={Springer}, 
   address={New York},
   date={2011},
   pages={xiv+599},
   isbn={978-0-387-70913-0},
}
		
\bib{Choquet}{book}{
   author={Choquet, Gustave},
   title={Lectures on analysis},
   volume={ III},
   subtitle={Infinite-dimensional measures and
   problem solutions},
   publisher={W. A. Benjamin}, 
   address={New York--Amsterdam},
   date={1969},
   pages={Vol. III: xix+320 pp.+xxi},
}

\bib{Conway}{book}{
   author={Conway, John B.},
   title={A course in functional analysis},
   series={Graduate Texts in Mathematics},
   volume={96},
   edition={2},
   publisher={Springer},
   address={New York},
   date={1990},
   pages={xvi+399},
   isbn={0-387-97245-5},
}

\bib{DeGiorgi_1954}{article}{
   author={De Giorgi, Ennio},
   title={Su una teoria generale della misura \((r-1)\)-dimensionale in uno
   spazio ad \(r\) dimensioni},
   journal={Ann. Mat. Pura Appl. (4)},
   volume={36},
   date={1954},
   pages={191--213},
   issn={0003-4622},
   doi={10.1007/BF02412838},
}
\bib{Delladio}{article}{
   author={Delladio, S.},
   title={Lower semicontinuity and continuity of functions of measures with
   respect to the strict convergence},
   journal={Proc. Roy. Soc. Edinburgh Sect. A},
   volume={119},
   date={1991},
   number={3-4},
   pages={265--278},
   issn={0308-2105},
   doi={10.1017/S0308210500014827},
}
    
\bib{Diestel_Uhl_1977}{book}{
   author={Diestel, J.},
   author={Uhl, J. J., Jr.},
   title={Vector measures},
   contribution={
    type={foreword},
    author={B. J. Pettis},
    },
   series={Mathematical Surveys}, 
   volume={15},
   publisher={American Mathematical Society}, 
   address={Providence, R.I.},
   date={1977},
   pages={xiii+322},
}

\bib{Dinghas_1944}{article}{
   author={Dinghas, Alexander},
   title={\"{U}ber einen geometrischen Satz von Wulff f\"{u}r die Gleichgewichtsform
   von Kristallen},
   journal={Z. Kristallogr., Mineral. Petrogr.},
   volume={105},
   date={1944},
   number={Abt. A.},
   pages={304--314},
   issn={0044-2968},
}
		
\bib{Federer_1969}{book}{
   author={Federer, Herbert},
   title={Geometric measure theory},
   series={Die Grundlehren der mathematischen Wissenschaften}, 
   volume={153},
   publisher={Springer},
   address={New York},
   date={1969},
   pages={xiv+676},
}

\bib{Figalli_Maggi_Pratelli_2010}{article}{
   author={Figalli, A.},
   author={Maggi, F.},
   author={Pratelli, A.},
   title={A mass transportation approach to quantitative isoperimetric
   inequalities},
   journal={Invent. Math.},
   volume={182},
   date={2010},
   number={1},
   pages={167--211},
   issn={0020-9910},
   doi={10.1007/s00222-010-0261-z},
}

\bib{Stefani_2018}{article}{
   author={Stefani, Giorgio},
   title={On the monotonicity of perimeter of convex bodies},
   journal={J. Convex Anal.},
   volume={25},
   date={2018},
   number={1},
   pages={93--102},
   issn={0944-6532},
}

\bib{Halmos_1948}{article}{
   author={Halmos, Paul R.},
   title={The range of a vector measure},
   journal={Bull. Amer. Math. Soc.},
   volume={54},
   date={1948},
   pages={416--421},
   issn={0002-9904},
   doi={10.1090/S0002-9904-1948-09020-6},
}

\bib{Liapounoff_1940}{article}{
   author={Liapounoff, A.},
   title={Sur les fonctions-vecteurs compl\`etement additives},
   language={Russian, with French summary},
   journal={Bull. Acad. Sci. URSS. S\'{e}r. Math.},
   volume={4},
   date={1940},
   pages={465--478},
}

\bib{Lohne_Zalinescu_2006}{article}{
   author={L\"{o}hne, Andreas},
   author={Z\u{a}linescu, Constantin},
   title={On convergence of closed convex sets},
   journal={J. Math. Anal. Appl.},
   volume={319},
   date={2006},
   number={2},
   pages={617--634},
   issn={0022-247X},
   doi={10.1016/j.jmaa.2005.06.061},
}
\bib{Reshetnyak}{article}{
     author={Reshetnyak, Yu. G.},
   title={The weak convergence of completely additive vector-valued set
   functions},
   language={Russian},
   journal={Sibirsk. Mat. Zh.},
   volume={9},
   date={1968},
   pages={1386--1394},
   issn={0037-4474},
   translation={
     journal={Siberian Math. J.},
     volume={9},
     date={1968}, 
     pages={1039–-1045},
   },
}

\bib{Rickert_1967_a}{article}{
   author={Rickert, Neil W.},
   title={The range of a measure},
   journal={Bull. Amer. Math. Soc.},
   volume={73},
   date={1967},
   pages={560--563},
   issn={0002-9904},
   doi={10.1090/S0002-9904-1967-11747-6},
}
      
\bib{Rodriguez_Piazza}{article}{
   author={Rodr\'\i guez-Piazza, Luis},
   title={The range of a vector measure determines its total variation},
   journal={Proc. Amer. Math. Soc.},
   volume={111},
   date={1991},
   number={1},
   pages={205--214},
   issn={0002-9939},
   doi={10.2307/2047879},
}

\bib{Rudin_1987}{book}{
   author={Rudin, Walter},
   title={Real and complex analysis},
   edition={3},
   publisher={McGraw-Hill},
   address={New York},
   date={1987},
   pages={xiv+416},
   isbn={0-07-054234-1},
}

\bib{Salinetti}{article}{
   author={Salinetti, Gabriella},
   author={Wets, Roger J.-B.},
   title={On the convergence of sequences of convex sets in finite
   dimensions},
   journal={SIAM Rev.},
   volume={21},
   date={1979},
   number={1},
   pages={18--33},
   issn={0036-1445},
   doi={10.1137/1021002},
}

\bib{Spector_2011}{article}{
  author={Spector, Daniel},
  title={Simple proofs of some results of Reshetnyak},
  journal={Proc. Amer. Math. Soc.},
  volume={139},
  date={2011},
  number={5},
  pages={1681--1690},
  issn={0002-9939},
  doi={10.1090/S0002-9939-2010-10593-2},
}

\bib{Taylor_1974}{article}{
   author={Taylor, Jean E.},
   title={Existence and structure of solutions to a class of nonelliptic
   variational problems},
   conference={
     title={Convegno di Teoria Geometrica dell'Integrazione e Variet\`a Minimali}, 
     address={INDAM, Roma}, 
     date={Maggio 1973},
   },
   book={
     series={Symposia Mathematica},
      publisher={Academic Press, London},
      volume={XIV},
   },
   date={1974},
   pages={499--508},
}

\bib{Witsenhausen_1973}{article}{
   author={Witsenhausen, H. S.},
   title={Metric inequalities and the zonoid problem},
   journal={Proc. Amer. Math. Soc.},
   volume={40},
   date={1973},
   pages={517--520},
   issn={0002-9939},
   doi={10.2307/2039404},
}

\bib{Wulff_1901}{article}{
  title={Zur Frage der Geschwindigkeit des Wachsthums und der Aufl\"osung der Krystallfl\"achen},
  author={Wulff, G.},
  journal={Zeitschrift f\"ur Kristallographie},
  volume={34}, 
  number={1--6},
  pages={449-–530}, 
  doi={10.1524/zkri.1901.34.1.449},
}
\end{biblist}

\end{bibdiv}

\end{document}